\renewcommand{\leq}{\leqslant}
\renewcommand{\geq}{\geqslant}
\DeclareMathOperator{\Id}{Id}
\DeclareMathOperator{\As}{As}
\DeclareMathOperator{\Ab}{Ab}
\DeclareMathOperator{\FAb}{FAb}
\DeclareMathOperator{\DD}{d}
\newcommand{\N}{\mathbb{N}}
\newcommand{\Z}{\mathbb{Z}}
\newcommand{\Sym}{\operatorname{S}}
\newcommand{\Q}{\mathbb{Q}}
\renewcommand{\Im}{\operatorname{Im}}
\newcommand{\w}{\operatorname{\omega}}
\renewcommand{\deg}{\operatorname{\delta}}
\newcommand{\qu}{\triangleright}
\newcommand{\T}{\operatorname{T}}
\newcommand{\D}{\operatorname{D}}
\newcommand{\R}{\operatorname{R}}
\renewcommand{\AA}{\operatorname{A}}
\newcommand{\GS}{\mathcal{G}}
\newcommand{\CBar}{\Bar{\mathcal{C}}}
\newcommand{\CGroup}{\mathcal{C}}
\newcommand{\SetConjClasses}{\mathbf{C}}
\newcommand{\smallG}{\GS}
\newcommand{\bigG}{\widetilde{\GS}}
\newcommand{\bigGplus}{\bigG^+}
\newcommand{\bigGfull}{\bigG^{+,\mathrm{f}}}
\newcommand{\lf}{l^\mathrm{f}}
\newcommand{\ppart}{\mathcal{P}}
\newcommand{\pp}{\boldsymbol{\mathrm{p}}}
\newcommand{\Full}{\mathrm{FTS}}
\newcommand{\FullR}{\mathrm{FRS}}
\newcommand{\kbar}{\Bar{k}}
\newcommand{\genrel}[2]{\left\langle \ #1 \ \vline \ #2 \ \right\rangle}
\numberwithin{equation}{section}
\numberwithin{figure}{section}
\numberwithin{table}{section}
\newtheorem{thm}{Theorem}[section]
\newtheorem*{thm*}{Theorem}
\newtheorem{lem}[thm]{Lemma}
\newtheorem{cor}[thm]{Corollary}
\newtheorem{pro}[thm]{Proposition}
\theoremstyle{definition}
\newtheorem{defn}[thm]{Definition}
\newtheorem{nota}[thm]{Notation}
\newtheorem{rem}[thm]{Remark}
\newtheorem{exa}[thm]{Example}
\title{How structure groups and monoids grow}
\author{Carsten Dietzel, Edouard Feingesicht, Victoria Lebed}
\date{\today}
\keywords{Growth series, structure group, structure monoid, conjugation quandle, symmetric group, dihedral groups, Yang--Baxter Equation.}
\subjclass[2020]{
 	57K12,   	%Generalized knots (virtual knots, welded knots, quandles, etc.)
 	16P90,   	%Growth rate, Gelfand-Kirillov dimension
 	20B30,   	%Symmetric groups
 	20F05,   	%Generators, relations, and presentations of groups
 	16T25.   	%Yang-Baxter equations
 	}
\address[Carsten Dietzel]{Normandie Univ, UNICAEN, CNRS, LMNO, 14000 Caen, France}
\email{carsten.dietzel@unicaen.fr}
\address[Edouard Feingesicht]{Normandie Univ, UNICAEN, CNRS, LMNO, 14000 Caen, France}
\email{edouard.feingesicht@unicaen.fr}
\address[Victoria Lebed]{Normandie Univ, UNICAEN, CNRS, LMNO, 14000 Caen, France}
\email{victoria.lebed@unicaen.fr}
\begin{document}

\begin{abstract}
The structure groups and monoids of set-theoretic solutions to the Yang--Baxter Equation can be regarded as deformations of free abelian groups resp. monoids. In this work, we obtain explicit formulae for the growth series of the structure groups and monoids of transposition and dihedral quandles, and of the structure groups of permutation quandles. These quandles provide important families of YBE solutions. The intricate nature of our formulae confirms that, while preserving many nice properties of free abelian groups, even the simplest structure groups and monoids are remarkably rich objects. We also establish some structural properties and easily computable normal forms for the monoids considered.
\end{abstract}

\maketitle

\section*{Introduction}

The free abelian group on a set $X$ admits the following standard presentation:
\[
\FAb(X) = \genrel{e_x : x \in X}{e_x e_y = e_y e_x : x,y \in X}.
\]
This presentation suggests the following natural deformation. Consider a map $r \colon X \times X \to X \times X$ sending $(x,y)$ to $(\lambda_x(y), \rho_y(x))$, and put
\begin{equation}\label{E:StrGroup}
    \As(X,r) = \genrel{e_x : x \in X}{e_x e_y = e_{\lambda_x(y)} e_{\rho_y(x)} : x,y \in X}.    
\end{equation}

This defines the \emph{structure group} (alternatively called the \emph{associated}, \emph{enveloping}, or \emph{adjoint group}) of $(X,r)$. Imposing appropriate conditions on $r$, one forces $\As(X,r)$ to preserve the desired properties of $\FAb(X)$. For example, if $X$ is finite and $r$ is a solution to the Yang--Baxter Equation 
\[(r \times \Id_X)(\Id_X \times r) (r \times \Id_X) = (\Id_X \times r)(r \times \Id_X)(\Id_X \times r),\] 
that is involutive, meaning that $r^2=\Id_{X \times X}$, and non-degenerate, that is, $\lambda_x$ and $\rho_x$ are bijections for all $x\in X$, then $\As(X,r)$ is Bieberbach  \cite{GIVdB} and Garside \cite{YBEGarside}.
In particular, this gave the first systematic way of constructing Garside groups going beyond the foundational example of spherical Artin--Tits groups.

In the present work, we go in an orthogonal direction, and show that even for very simple maps $r$, the group $\As(X,r)$ might significantly differ from $\FAb(X)$. The invariant that we chose to measure this difference is the growth series. Recall that the \emph{growth series} of a group $G$ with respect to a generating set $S$ is the formal power series defined as
\[\GS_{G,S}(t) = \sum_{n = 0}^{\infty} \#\left(\overline{S}^n \setminus \left( \bigcup_{i=0}^{n-1} \overline{S}^i \right) \right) t^n = \sum_{g \in G}t^{l_S(g)}, \]
where $\overline{S} = S \cup \{s^{-1} : s \in S \}$; $\overline{S}^k = \{s_1 s_2 \cdots s_k :  s_1, \ldots, s_k \in \overline{S} \}$ for $k\geq 1$ and $\overline{S}^0 = \{1_G\}$; and $l_S(g)$ is the minimal number of elements of $\overline{S}$ needed to produce (multiplicatively) $g$. The growth series measures how fast the words in the letters $s^{\pm 1}$, $s \in S$, cover the entire group $G$ when the word length is allowed to grow. The generating set is omitted in notations when clear from the context. Thus, for $\As(X,r)$, we will always take $S= \{e_x : x \in X\}$. For the free abelian group $\Z^d = \FAb(\{1,\ldots,d\})$ which is also the structure group of the solution $r(x,y) = (y,x)$ on the set $X = \{1, \ldots,d \}$, we have  
\begin{equation}\label{E:GrowthFAb}
    \GS_{\Z^d}(t) = \left( \frac{1+t}{1-t} \right)^d.
\end{equation}
The typical questions on growth series are their rationality and asymptotic behaviour. Explicit computation of the entire series is usually beyond reach, since it requires a fine understanding of how the given group works. Some notable exceptions are \cite{GrowthSurfaceGroups,GrowthGraphProduct,GrowthRacines,GrowthDyer}. The reader is directed to the introductory book \cite{HowGroupsGrow} for basics on the growth of groups\footnote{Note however a misprint in Example 7 from \cite{HowGroupsGrow} treating the case $G=\Z^d$. For $d=3$, the $n$th coefficient should be $a_3(n) = 4n^2+2$ instead of $4n^2+4n+2$.}.

The question of determining the growth series of structure groups was first explicitly raised in \cite{Vendramin_Survey}. To our knowledge, no concrete computations have been carried out so far. According to \cite{LebVenStrGroups}, the structure group of an invertible non-degenerate YBE solution is related to the structure group of its \emph{derived solution} of the ``only-left-deformed'' form
\begin{equation}\label{E:QuandleSol}
  r_{\qu}(x,y) = (x \qu y, x)  
\end{equation}
by a bijective length-preserving group $1$-cocycle. Thus, the two structure groups have identical growth series. This explains why solutions of the form  \eqref{E:QuandleSol} are particularly important. They can also be regarded as \emph{quandle} solutions, but since we do not use the quandle theory here, we will only mention connections to quandles as remarks for specialists. 

In this work, we provide computations for four important families of YBE solutions. Concretely, we take as $X$:
\begin{enumerate}
    \item the set $\Sym_d$ of all permutations of $d$ elements;
    \item the set $\T_d$ of all transpositions $(i,j)$ in $\Sym_d$;
    \item the set $\D_d$ of all symmetries of a regular $d$-gon;
    \item the set $\R_d$ of all reflections inside $\D_d$.
\end{enumerate}
As $r$, we take the map $r_{\qu}$, where $\qu$ is the conjugation operation:
\[x \qu y = xyx^{-1}.\]
For any group, or a union of conjugacy classes thereof, $r_{\qu}$ is a bijective non-degenerate YBE solution. Our four solutions, called, respectively, \emph{permutation, transposition, dihedral} and \emph{reflection solutions} in what follows, correspond to conjugation quandles (and their subquandles) of symmetric and dihedral groups. Note that the name \emph{dihedral quandle} historically refers to $(\R_d, \qu)$ rather than to the whole $(\D_d, \qu)$; we will not use this name to avoid confusion. The map $r_{\qu}$ will be omitted in notations when clear from the context; thus, $\As(X)$ stands for $\As(X,r_{\qu})$.

Our first results are the following formulae for the growth series of $\As(\T_d)$ and $\As(\R_d)$:
\begin{align*}
  \GS_{\As(\T_d)}(t) &=\prod_{k=2}^{d-1} (1 + kt) \cdot  \frac{1+t^2}{1-t} +  t \cdot \prod_{k=1}^{d-1} (1 + kt) \cdot \left(  \sum_{k=1}^{d-1} \frac{k}{1+kt} \right),\\ 
  \GS_{\As(\R_d)}(t) &=\frac{2dt}{1-t}+1+(d-1)t^2 \hspace{1cm} \text{ for odd } d.
\end{align*}
They are derived from the relation we establish between these growth series $\bigG$ and the growth series $\smallG$ for $\Sym_d$ with respect to the generating set $\T_d$ (respectively, $\D_d$ w.r.t. $\R_d$):
\begin{equation}\label{E:GrSeriesAsTn}
    \bigG(t) = 
            t \cdot \frac{\DD}{\DD \! t} \smallG(t) + \frac{1 + t^2}{1 - t^2}\smallG(t),
\end{equation}
and classical formulae for $\GS_{\Sym_d,\T_d}$ and $\GS_{\D_d,\R_d}$. Note that \cref{E:GrSeriesAsTn} actually holds for a large family of groups, including all Coxeter groups of ADE type (Theorem \ref{thm:relating_minor_and_major_growth_series}).

We further compute the growth series of the structure groups of entire permutation and odd rank dihedral solutions (actually, of a more general type of solutions, see \cref{thm:growth_series_by_defect_terms}):
    \begin{align*} 
        \GS_{\As(\Sym_d \text{ or } \D_d)}(t) = \gamma_d \cdot \left( \frac{1+t}{1-t} \right)^{c_d} - (1+t)^2 \cdot \left(\sum_{\kbar \in \Z^{c_d-1}} \left( \gamma_d - \#\left( \prod_{i=1}^{c_d-1} \mathcal{C}_i^{k_i} \right) \right) \cdot t^{|\kbar|}\right).
    \end{align*}
 Here $\gamma_d$ is the size of the commutator subgroup and $c_d$ the number of conjugacy classes of $\Sym_d$ or $\D_d$; %(which is the number of partitions of the integer $d$);
 the $\mathcal{C}_i$ are the conjugacy classes different from $\{1\}$; $\kbar$ stands for a tuple of integers $(k_1, \ldots, k_{c_d-1})$; and $|\kbar| = \sum_{i=1}^{c_d-1}|k_i|$. For $\Sym_d$, the cumbersome correction term in the end of the formula is a polynomial except for $d=4$, and can be explicitly computed starting from the multiplication table for the conjugacy classes of $\Sym_d$. The products of conjugacy classes of $\Sym_d$ are an interesting subject in itself, see for instance \cite{SymConj} and references therein. In particular, asymptotically the growth behaviour of $\As(\Sym_d)$ is close to that of a free abelian group, except for $d=4$. For dihedral groups $\D_d$ with odd non-prime $d$, the correction term is not a polynomial. 

In parallel to the structure group, one can define the \emph{structure monoid} $\As^+(X,r)$ of $(X,r)$ using the same recipe as \eqref{E:StrGroup}, but this time regarding it as a monoid presentation. This object tends to better capture the nature of the map $r$; see for instance \cite{StrMon1,StrMon2} and references therein. We note here that the growth series of $\As^+(X,r)$ coincides with the Hilbert--Poincaré series of the associated \emph{structure algebra} $K(X,r)$ over a field $K$; see for instance \cite{StrMonAlg2,StrMonAlg} for a detailed study of the latter object.

In Theorem \ref{T:GSforStrMonTranspositions}, we explicitly compute the growth series of the structure monoids $\As^+(\T_d)$. The result possesses a surprisingly compact yet intricate presentation when these monoids are considered simultaneously for all $d$:
    \[
    \sum_{d \geq 0}\frac{1}{d!}\GS_{\As^+(\T_d)}(t)x^d \ = \ \exp\left(\frac{(1-tx)^{-t}-1-t^4x}{t^2(1-t^2)}\right).
    \]
This should be compared with the much simpler growth series of the free abelian monoid $\N_0^d$:
\begin{equation}\label{E:GrowthFAbMon}
    \GS_{\N_0^d}(t) = \left( \frac{1}{1-t} \right)^d, \hspace*{1cm} \sum_{d \geq 0}\frac{1}{d!}\GS_{\N_0^d}(t)x^d  = \exp\left( \frac{x}{1-t}\right).
\end{equation}
The key ingredient of our computation is a structural study of the monoid $\As^+(\T_d)$, interesting per se. We decompose $\As^+(\T_d)$ as a semilattice of subsemigroups, isomorphic to the partition semilattice of rank $d$. Each subsemigroup is a direct product of what we call the \emph{full transposition semigroups} $\Full_k$. The latter are then injected into the direct product $\Sym_k \times \N$, where the second component keeps track of the length. This is a semigroup version of the group injection $\As(\T_d) \hookrightarrow \Sym_d \times \Z$ from \cite{LebedConj}, which does not work for the entire monoid $\As^+(\T_d)$.

Finally, \cref{T:GrowthStrMonoidReflectionFinite} settles the case of the structure monoids of $\R_d$ both for odd and even $d$:
 \[\GS_{\As^+(\R_d)} (t)=1+ d \cdot \left( t + \left( \sum_{c|d } \frac{\varphi(c)}{c} \right) t^2 + \tau(d) \frac{t^3}{1-t}  + \tau\left(\frac{d}{2}\right) \frac{t^4}{2(1-t)^2} \right),\]
where $\tau$ is the number-of-divisors function, and $\varphi$ is Euler's totient function. At the heart of this computation are once again a semilattice decomposition of $\As^+(\R_d)$, a reduction of necessary computations to the extremal subsemigroups of this semilattice, and the injection of the latter into elementary semi-direct products with easily extractable length. This injection is realised using easily computable numeric invariants and simple normal forms, which surprisingly turn out to be more approachable for the reflections $\R_{\infty}$ inside infinite dihedral groups $\D_{\infty}$ (see \cref{S:MonoidsInfiniteReflection}). We then return to the finite case by reducing modulo $d$.

\medskip
\textbf{Acknowledgements.} The authors thank Eddy Godelle and Stéphane Launois for their contributions to the working group on the growth of structure groups, which gradually reduced to a working subgroup containing the three current authors. %reviewer
 
\section{How structure groups of transposition- and reflection-type solutions grow}\label{S:GroupsTransp}

In this section, we explicitly compute the growth series of the structure group of any transposition solution $\T_d$, and any reflection solution $\R_d$ with odd $d$. Our computations can be adapted to a more general class of solutions, presented below.

\begin{defn}
A \emph{$\CBar$-presentation} of a group is a presentation involving relations of two types only:  
\begin{itemize}
\item \emph{conjugation relations} $a \qu b = c$ for certain triples of generators where $a \qu b = aba^{-1}$;
\item \emph{power relations} of the form $a^p=1$, for certain generators $a$ and certain integers $p \geq 2$ (at most one relation per conjugacy class).
\end{itemize}    
A group admitting such a presentation is called a \emph{$\CBar$-group}.
\end{defn}

This definition goes back to \cite{LebedConj}; see there for examples of very different nature. The letter $\CGroup$ in the name stands for \emph{conjugation}, and the bar for the quotient by powers of generators, as in the finite quotients of certain structure groups.

We will work here with a particular type of $\CBar$-groups:

\begin{defn}
A \emph{$\CBar$-presentation of class $2$} is a $\CBar$-presentation whose set of generators $C$ is a single conjugacy class, and which has exactly one power relation, $a^2=1$. 
\end{defn}

Examples include:
\begin{enumerate}
    \item the symmetric group $\Sym_d$ generated by the set of transpositions $\T_d$, and, more generally, Coxeter groups of ADE type;
    \item the dihedral group $\D_d$ for odd $d$, generated by the set of reflections $\R_d$;
    \item finite quotients of the structure groups of finite connected involutory quandles \cite{LebVenStrGroups}.
\end{enumerate}    
Endow $C$ with the YBE solution $r_{\qu}$ from \eqref{E:QuandleSol}. We will first show how to express the growth series of the structure group $\As(C)$ (with respect to the standard generators $e_x$) in terms of the growth series of the original group $G$ with respect to the generating set $C$:

\begin{thm} \label{thm:relating_minor_and_major_growth_series}
    Let $G$ be a group with a $\CBar$-presentation of class $2$, with a set of generators $C$. Then one has the following relation between two group growth series:
    \begin{equation} \label{E:relating_minor_and_major_growth_series}
        \GS_{\As(C)}(t) = \frac{1 + t^2}{1 - t^2} \cdot \GS_{G,{C}}(t) +
            t \cdot \frac{\DD}{\DD \! t} \GS_{G,{C}}(t)
    \end{equation}    
\end{thm}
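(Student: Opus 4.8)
The plan is to realise $\As(C)$ explicitly as a subgroup of $G\times\Z$, compute the word length (with respect to the generators $e_x$) of each element of the image, and then sum $t^{\text{length}}$. Two homomorphisms out of $\As(C)$ are immediate: the \emph{length} homomorphism $\lambda\colon\As(C)\to\Z$, $e_x\mapsto 1$, well defined because both sides of each defining relation are words of length $2$; and the \emph{projection} $\pi\colon\As(C)\to G$, $e_x\mapsto x$, well defined because $e_xe_y$ and $e_{x\qu y}e_x$ both map to $xy$ (note $x\qu y\cdot x=xy$). Set $\Phi=(\pi,\lambda)\colon\As(C)\to G\times\Z$. Since the power relation in a class-$2$ $\CBar$-presentation is $a^2=1$ and all generators are conjugate, every element of $C$ is an involution, so all defining relators of $G$ (the $aba^{-1}c^{-1}$ and the $a^2$) have even length; hence there is a homomorphism $\varepsilon\colon G\to\Z/2$ sending $C$ to $1$, and $\varepsilon(g)$ equals the $C$-length $l_C(g)$ modulo $2$.

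The heart of the proof is that $\Phi$ is injective with image $H:=\{(g,n)\in G\times\Z: n\equiv\varepsilon(g)\pmod 2\}$. Injectivity follows from the structural claim that all squares $e_x^2$, $x\in C$, coincide and equal a single \emph{central} element $z_0$. For centrality: fix $x,y\in C$, put $z=xyx^{-1}\in C$; since $x^2=1$ one has $x\qu z=y$, so the defining relations give both $e_xe_y=e_ze_x$ and $e_xe_z=e_ye_x$, and eliminating $e_z$ yields $e_y=e_x^{-2}e_ye_x^2$ — as $C$ generates $\As(C)$, $e_x^2$ is central. For coincidence: $e_{y\qu x}=e_ye_xe_y^{-1}$ gives $e_{y\qu x}^2=e_ye_x^2e_y^{-1}=e_x^2$, and since $C$ is a single conjugacy class of $G=\langle C\rangle$, iterating over a $C$-word for an arbitrary conjugating element shows $e_x^2=e_{x'}^2$ for all $x'\in C$. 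The element $z_0$ has infinite order ($\lambda(z_0)=2$), and $\As(C)/\langle z_0\rangle$ has presentation $\langle e_x\mid e_xe_y=e_{x\qu y}e_x,\ e_x^2=1\rangle$; the surjection $e_x\mapsto x$ of this quotient onto $G$ has an inverse because the defining relations of the chosen $\CBar$-presentation of $G$ — being conjugation and power relations valid in $G$ — are among the relations of the quotient. Thus $\ker\pi=\langle z_0\rangle\cong\Z$, so $\ker\Phi=\langle z_0\rangle\cap\ker\lambda=1$. For the image, $(x,1)(y,1)^{-1}=(xy,0)$ shows $\operatorname{Im}\Phi\supseteq\ker\varepsilon\times\{0\}$, and adjoining the $(x,1)$ together with $\Phi(z_0)=(1,2)$ one checks $\operatorname{Im}\Phi=H$.

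It remains to compute lengths and sum. Since $e_x^{\pm1}\mapsto(x,\pm1)$, any length-$k$ word maps to some $(g,n)$ with $l_C(g)\le k$ and $|n|\le k$, whence the word length of $\Phi^{-1}(g,n)$ is at least $\max(l_C(g),|n|)$. This is attained: writing $g$ as a $C$-geodesic $x_1\cdots x_{l_C(g)}$, if $|n|\le l_C(g)$ one puts suitable signs on the $e_{x_i}$ so that the exponents sum to $n$, and if $|n|>l_C(g)$ one takes all signs $=\operatorname{sign}(n)$ and appends the relevant (even) power of a fixed $e_{c_0}$. Hence $\GS_{\As(C)}(t)=\sum_{(g,n)\in H}t^{\max(l_C(g),|n|)}$; summing first over the $n$ compatible with a fixed $g$ of $C$-length $\ell$ gives $\bigl(\ell+1+\tfrac{2t^2}{1-t^2}\bigr)t^\ell=\bigl(\ell+\tfrac{1+t^2}{1-t^2}\bigr)t^\ell$, and then summing over $g\in G$, using $\sum_{g}l_C(g)\,t^{l_C(g)}=t\cdot\frac{\DD}{\DD\!t}\GS_{G,C}(t)$, yields \eqref{E:relating_minor_and_major_growth_series}.

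The main obstacle is the structural claim of the second paragraph: the centrality computation is short, but collapsing all the $e_x^2$ into one element and, especially, identifying $\As(C)/\langle z_0\rangle$ with $G$ rely on the precise conjugation-plus-power shape of a class-$2$ $\CBar$-presentation. The resulting embedding $\As(C)\hookrightarrow G\times\Z$ generalises the case $G=\Sym_d$, $C=\T_d$ from \cite{LebedConj}; everything downstream of it — the image, the length formula, the final series manipulation — is routine bookkeeping.
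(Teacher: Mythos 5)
Your proposal is correct and follows essentially the same route as the paper: embed $\As(C)$ into $G\times\Z$ via $e_x\mapsto(x,1)$, identify the image as the parity-fibred product, show the word length of $(g,m)$ is $\max(l_C(g),|m|)$, and sum the resulting geometric/derivative contributions exactly as in the paper's computation. The only difference is that where the paper cites \cite{LebedConj} for the injectivity and image of this embedding, you prove it from scratch (centrality and coincidence of the squares $e_x^2=z_0$, and the identification $\As(C)/\langle z_0\rangle\cong G$ using the conjugation-plus-power shape of the presentation), and that self-contained argument is sound.
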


Before giving a proof, let us explore the consequences of this relation in our favourite examples. First, take $G=\D_d$ and $C=\R_d$ for odd $d$. The reflection lengths of symmetries of a regular $d$-gon look as follows:
\begin{enumerate}
    \item $0$ for the identity:
    \item $1$ for all $d$ reflections;
    \item $2$ for all $d-1$ non-trivial rotations.
\end{enumerate}
Thus
\[\GS_{\D_d,\R_d}(t) = 1 + dt + (d-1)t^2. \]
Plugging this into \eqref{E:relating_minor_and_major_growth_series}, we get
    \begin{align*} 
        \GS_{\As(\R_d)}(t) &= \frac{1 + t^2}{1 - t^2} \cdot (1 + dt + (d-1)t^2) +
            t \cdot (d + 2(d-1)t)\\
            &= \frac{1 + t^2}{1 - t} \cdot (1  + (d-1)t) +
            dt + 2(d-1)t^2\\
            &= \frac{(1 + t^2)\cdot (1  + (d-1)t) + (1-t) \cdot (-1 +
            dt + (d-1)t^2)}{1 - t}  +1+(d-1)t^2\\
            &= \frac{2dt}{1 - t}  +1+(d-1)t^2.
    \end{align*} 
    This formula can also be obtained by showing that every element of $\As(\R_d)$ can be uniquely presented in the form $e_0^ke_x$. This can be deduced from \cite{Clauwens,IglesiasVen}. 
    Results in \cite{LebedConj} can furthermore be applied to show that for even $d$ there is an explicit isomorphism $\As(\R_d) \cong \Z^2 \underset{\Z_2 \times \Z_2}{\times}\D_d$ which can be used to show that $\GS_{\As(\R_d)}(t) = \frac{d}{2}\left( \frac{1+t}{1-t} \right)^2 + \left( \frac{d}{2}-1 \right)(t^2 - 1)$, in that case.

    \medskip
    
Second, take $G=\Sym_d$, $C=\T_d$. To improve readability, we will use the notation 
\[\smallG_d =\GS_{\Sym_d, \T_d}, \hspace*{1cm} \bigG_d = \GS_{\As(\T_d)}\] 
for the growth series considered, which will be referred to as the \emph{small} and the \emph{big} series respectively. We will need the following formula for the small series:
\begin{pro}
    One has
    \begin{equation}\label{E:Solomon}
        \smallG_d(t) = \prod_{k=1}^{d-1} (1 + kt).
    \end{equation}
\end{pro}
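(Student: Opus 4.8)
The plan is to reduce the statement to the classical description of the $\T_d$-length of a permutation together with a standard recursion for a cycle-counting polynomial. Write $c(\sigma)$ for the number of cycles of $\sigma \in \Sym_d$, fixed points counting as $1$-cycles, and put $P_d(x) = \sum_{\sigma \in \Sym_d} x^{c(\sigma)}$. First I would show that $l_{\T_d}(\sigma) = d - c(\sigma)$. The inequality ``$\leq$'' is immediate, since a cycle of length $\ell$ is a product of $\ell-1$ transpositions and summing over the cycles of $\sigma$ expresses it as a product of $d - c(\sigma)$ transpositions. For ``$\geq$'', the key elementary fact is that multiplying $\sigma$ by a transposition $(i\ j)$ changes $c(\cdot)$ by exactly $\pm 1$: it splits a cycle in two if $i$ and $j$ lie in a common cycle of $\sigma$, and merges two cycles into one otherwise. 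Hence along any factorisation $\sigma = \tau_1 \cdots \tau_m$ into transpositions the value of $c$ travels from $c(\Id) = d$ down to $c(\sigma)$ in unit steps, forcing $m \geq d - c(\sigma)$.

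With this in hand, summing over the group and substituting gives
\[
\smallG_d(t) \;=\; \sum_{\sigma \in \Sym_d} t^{\,l_{\T_d}(\sigma)} \;=\; \sum_{\sigma \in \Sym_d} t^{\,d - c(\sigma)} \;=\; t^d \, P_d\!\left(\tfrac{1}{t}\right).
\]
It then remains to identify $P_d$. I would prove $P_d(x) = \prod_{j=0}^{d-1}(x+j)$ by the usual insertion recursion: each $\sigma \in \Sym_d$ is obtained from a unique $\pi \in \Sym_{d-1}$ either by adjoining $d$ as a new fixed point, which raises $c$ by $1$, or by inserting $d$ immediately after one of the $d-1$ elements of $\{1, \ldots, d-1\}$ inside its cycle in $\pi$, which leaves $c$ unchanged; this yields $P_d(x) = (x + d - 1) P_{d-1}(x)$ with $P_1(x) = x$, and the product formula follows by induction. (Equivalently, $P_d$ is the generating polynomial of the unsigned Stirling numbers of the first kind.) Substituting $x = 1/t$ then gives
\[
\smallG_d(t) \;=\; t^d \prod_{j=0}^{d-1}\!\left(\tfrac{1}{t} + j\right) \;=\; \prod_{j=0}^{d-1}(1 + jt) \;=\; \prod_{k=1}^{d-1}(1 + kt),
\]
the factor $j = 0$ contributing $1$, which is the claim.

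I do not expect a genuine obstacle in this argument: the only step requiring an actual proof rather than a routine substitution is the lower bound $l_{\T_d}(\sigma) \geq d - c(\sigma)$ — the observation that a single transposition alters the cycle count by exactly one — and this is classical. As an alternative to the whole computation, one may simply invoke the well-known fact that $\GS_{\Sym_d, \T_d}$ is the reflection-length generating function of the Coxeter group of type $A_{d-1}$, which equals $\prod_i (1 + m_i t)$ over its exponents $m_i = 1, \ldots, d-1$.
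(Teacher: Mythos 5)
Your proof is correct and follows essentially the same route as the paper: both rest on the identity $l_{\T_d}(\sigma)=d-c(\sigma)$ and the insertion recursion (the new element either becomes a fixed point or is inserted after one of the existing elements in a cycle), which is exactly the paper's recursion $\smallG_{d+1}(t)=(1+dt)\smallG_d(t)$. The only difference is presentational: you make the length formula explicit with the $\pm1$ cycle-count argument and route the bookkeeping through the Stirling cycle polynomial $P_d(x)$ with the substitution $x=1/t$, whereas the paper runs the same insertion argument directly on the length generating function.
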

This formula admits many generalisations, and can be deduced from the classical work on finite unitary reflection groups, see\cite{ShephardTodd}. We will present here an elementary inductive proof for completeness.

\begin{proof}
Since in our case all generators are involutive, in the formula
    \begin{equation*}
        \GS_{G,C}(t) = \sum_{g \in G} t^{l_C(g)}
    \end{equation*}
one can take as $l_C(g)$ is the minimal number of generators from $C$ needed to write $g$ as a product. The shortest expressions of a permutation in terms of transpositions correspond to cycle decompositions. Now, the $d+1$st element can be inserted into a permutation $\sigma \in \Sym_d$ either as a fixed point, which preserves the length of the permutation (in terms of transpositions), or after any of the $d$ elements in the cycle decomposition of $\sigma$, which increases the length of the permutation by $1$ and thus adds a factor $t$. Hence the recursion relation  
\begin{equation}\label{E:SolomonREcursion}
        \smallG_{d+1}(t) = (1 + dt) \cdot \smallG_d(t).
\end{equation}  
Together with $\smallG_1(t)=1$, it yields \eqref{E:Solomon}.
\end{proof}

Inserting this formula into \eqref{E:relating_minor_and_major_growth_series}, one gets
\begin{cor} For any integer $d \geq 2$, one has
    \[
    \bigG_d(t) = \prod_{k=2}^{d-1} (1 + kt) \cdot  \frac{1+t^2}{1-t} +  t \cdot \prod_{k=1}^{d-1} (1 + kt) \cdot \left(  \sum_{k=1}^{d-1} \frac{k}{1+kt} \right).
    \]
\end{cor}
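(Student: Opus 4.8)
The plan is to substitute the product formula \eqref{E:Solomon} for the small series $\smallG_d$ into the relation \eqref{E:relating_minor_and_major_growth_series} of \cref{thm:relating_minor_and_major_growth_series}, which applies since $\Sym_d$ with generating set $\T_d$ is a $\CBar$-group of class $2$ (the transpositions form a single conjugacy class and satisfy $a^2 = 1$), and then simplify the two resulting summands separately. First I would handle the derivative term. Logarithmic differentiation of $\smallG_d(t) = \prod_{k=1}^{d-1}(1+kt)$ gives $\smallG_d'(t)/\smallG_d(t) = \sum_{k=1}^{d-1} k/(1+kt)$, hence $t\,\smallG_d'(t) = t\cdot\prod_{k=1}^{d-1}(1+kt)\cdot\sum_{k=1}^{d-1} k/(1+kt)$, which is already exactly the second summand appearing in the claimed formula.

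Next I would treat the first term $\frac{1+t^2}{1-t^2}\,\smallG_d(t)$. Factoring $1-t^2 = (1-t)(1+t)$ and peeling the $k=1$ factor off the product, $\prod_{k=1}^{d-1}(1+kt) = (1+t)\prod_{k=2}^{d-1}(1+kt)$, the $(1+t)$ in the numerator cancels the $(1+t)$ in the denominator, leaving $\frac{1+t^2}{1-t}\prod_{k=2}^{d-1}(1+kt)$ — precisely the first summand of the claimed formula. Adding the two simplified pieces then gives the stated expression for $\bigG_d(t)$, completing the argument.

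I would conclude with a brief sanity check of the boundary case $d=2$, where the empty product $\prod_{k=2}^{1}(1+kt)$ is taken to be $1$; there the formula collapses to $\frac{1+t^2}{1-t} + t(1+t)\cdot\frac{1}{1+t} = \frac{1+t}{1-t}$, consistent with $\As(\T_2)\cong\Z$ and \eqref{E:GrowthFAb}. There is essentially no obstacle: the computation is a one-liner once \eqref{E:Solomon} is in hand, and the only points requiring a little care are the cancellation of the $(1+t)$ factors and fixing the empty-product convention so that the formula is valid already at $d=2$.
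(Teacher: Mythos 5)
Your proposal is correct and matches the paper's argument: the corollary is obtained exactly by inserting the product formula \eqref{E:Solomon} into the relation \eqref{E:relating_minor_and_major_growth_series}, with the derivative handled by logarithmic differentiation and the $(1+t)$ factor cancelled against $1-t^2$. Your $d=2$ sanity check is consistent with the paper's value $\bigG_2(t)=\frac{1+t}{1-t}$.
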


For small values of $d$, this becomes
\begin{align*}
    \bigG_2(t) &= \frac{1+t}{1-t},\\
    \bigG_3(t) &= \frac{1+t}{1-t}\cdot (-2t^2+4t+1),\\
    \bigG_4(t) &= \frac{1+t}{1-t}\cdot (-12t^3+13t^2+10t+1),
\end{align*}
which for $d>2$ differs significantly from the free abelian case \eqref{E:GrowthFAb}.

To see how the big series $\bigG_d$ changes with $d$, we establish a recursive formula, very much resembling the formula \eqref{E:SolomonREcursion} for the small series:
\begin{cor} For any integer $d \geq 2$, one has
\begin{equation}\label{E:AsTnREcursion}
\bigG_{d+1}(t) = (1 + dt) \cdot \bigG_d(t)  + dt \cdot \smallG_d(t).
\end{equation}
\end{cor}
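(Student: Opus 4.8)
The plan is to combine two facts already in hand: the closed relation \eqref{E:relating_minor_and_major_growth_series} of \cref{thm:relating_minor_and_major_growth_series}, which gives $\bigG_d(t) = \frac{1+t^2}{1-t^2}\smallG_d(t) + t\,\frac{\DD}{\DD t}\smallG_d(t)$ for every $d \geq 2$, and the recursion \eqref{E:SolomonREcursion} for the small series, $\smallG_{d+1}(t) = (1+dt)\smallG_d(t)$. The idea is simply to write \eqref{E:relating_minor_and_major_growth_series} at index $d+1$ and substitute \eqref{E:SolomonREcursion}, differentiating the product when needed.

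Concretely, I would start from
\[
\bigG_{d+1}(t) = \frac{1+t^2}{1-t^2}\,\smallG_{d+1}(t) + t\,\frac{\DD}{\DD t}\smallG_{d+1}(t),
\]
replace $\smallG_{d+1}(t)$ by $(1+dt)\smallG_d(t)$ in both terms, and apply the product rule to the derivative term:
\[
\frac{\DD}{\DD t}\bigl((1+dt)\smallG_d(t)\bigr) = d\,\smallG_d(t) + (1+dt)\,\frac{\DD}{\DD t}\smallG_d(t).
\]
Substituting this back, the right-hand side becomes
\[
(1+dt)\frac{1+t^2}{1-t^2}\smallG_d(t) + dt\,\smallG_d(t) + t(1+dt)\frac{\DD}{\DD t}\smallG_d(t),
\]
and now I would factor $(1+dt)$ out of the first and third summands to recognise exactly the combination $\frac{1+t^2}{1-t^2}\smallG_d(t) + t\,\frac{\DD}{\DD t}\smallG_d(t) = \bigG_d(t)$ coming again from \eqref{E:relating_minor_and_major_growth_series}. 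This yields $\bigG_{d+1}(t) = (1+dt)\bigG_d(t) + dt\,\smallG_d(t)$, as claimed.

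There is essentially no obstacle here: the statement is a formal manipulation of power series, valid since all series involved are honest rational functions and term-by-term differentiation is legitimate. The only point worth a word of care is the bookkeeping of the derivative term — making sure the stray $d\,\smallG_d(t)$ produced by the product rule is precisely what produces the correction term $dt\,\smallG_d(t)$ — but this is immediate once the factorisation is carried out. One could alternatively give a direct combinatorial argument mirroring the proof of \eqref{E:SolomonREcursion} (inserting the $(d+1)$st point either as a fixed point or after one of the $d$ points of a cycle), but the algebraic derivation above is shorter and self-contained given the results already established.
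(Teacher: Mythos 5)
Your derivation is correct and is essentially identical to the paper's own proof: both write the relation of Theorem \ref{thm:relating_minor_and_major_growth_series} at index $d+1$, substitute the recursion $\smallG_{d+1}=(1+dt)\smallG_d$ (differentiating it via the product rule), and factor out $(1+dt)$ to recognise $\bigG_d$. No gaps; nothing further to add.
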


\begin{proof}
Deriving the recursive formula \eqref{E:SolomonREcursion} for $\smallG_d$, one gets
\[\frac{\DD}{\DD \! t}\smallG_{d+1}(t) = (1 + dt) \cdot \frac{\DD}{\DD \! t} \smallG_d(t) + d \cdot \smallG_d(t).\]
Thus
\begin{align*}
    \bigG_{d+1}(t) &=
            t \cdot \frac{\DD}{\DD \! t} \smallG_{d+1}(t) + \frac{1 + t^2}{1 - t^2} \cdot \smallG_{d+1}(t) \\
            &= t \cdot \left( (1 + dt) \cdot \frac{\DD}{\DD \! t} \smallG_d(t) + d \cdot \smallG_d(t) \right) + \frac{1 + t^2}{1 - t^2} \cdot (1 + dt) \cdot \smallG_d(t)\\
            &= (1 + dt) \cdot \left( t \cdot  \frac{\DD}{\DD \! t} \smallG_d(t) + \frac{1 + t^2}{1 - t^2}  \cdot \smallG_d(t)\right) + dt \cdot \smallG_d(t)\\
            &= (1 + dt) \cdot \bigG_d(t)  + dt \cdot \smallG_d(t). \qedhere
\end{align*}
\end{proof}

\begin{proof}[Proof of Theorem \ref{thm:relating_minor_and_major_growth_series}]

As was shown in \cite{LebedConj}, the assignment
\begin{align}
    \As(C) &\to G \times \Z,\label{E:AsIntoDirectProduct}\\
    e_x &\mapsto (x,1)\notag
\end{align}
uniquely extends to an injective group morphism, whose image is
\[G \underset{\Z_2}{\times} \Z = \{\ (g,m) \in G \times \Z \ : \ l_C(g)  \equiv m \pmod 2 \ \}.\]
Identifying elements of $\As(C)$ with their images in $G \times \Z$, one can interpret the desired growth series as follows:
\begin{equation}\label{E:TwoSeriesProof1}
   \GS_{\As(C)}(t) = \sum_{(g,m) \in G \underset{\Z_2}{\times} \Z}t^{l(g,m)}, 
\end{equation}
where $l(g,m)$ denotes the length with respect to the generators $e_x,\ x \in C$. Let us show that this length reduces to the following elementary formula:
\[l(g,m) = \max(l_C(g), |m|).\]
Indeed, to construct $(g,m)$ from the generators $(x,1),\ x \in C$, one needs on the one hand at least $l_C(g)$ of them (to obtain $g$ as the first factor), and on the other hand at least $|m|$ of them (to obtain $m$ as the second factor). So, $l(g,m) \geq \max(l_C(g), |m|)$. To prove the $\leq$ inequality, let $g=c_1 \cdots c_i$ be a minimal length representation of $g$ with respect to the generating set $C$. In the product $e_{c_1}^{\pm 1} \cdots e_{c_i}^{\pm 1}$, one can choose the signs $\pm 1$ to get as total degree any integer between $-i$ and $i$ having the same parity as $i$. If $l_C(g) \geq |m|$, this yields a representation of $(g,m)$ of length $l_C(g)$. In the opposite case $l_C(g) \leq |m|$, to get a representation of $(g,m)$ of length $|m|$, all signs $\pm 1$ should be the same as the sign of $m$, and several copies of the element $e_{c_1}^{\pm 2}$ (contributing only to the second factor of $G \times \Z$ since $c_1^{\pm 2}=1$ in $G$) should be added. Let us now split \eqref{E:TwoSeriesProof1} into two parts, according to which component yields the above $\max$. We will abbreviate $l_C(g)$ as $l(g)$ here.
\begin{align*}
    \GS_{\As(C)}(t) & \ =\  \sum_{g \in G} \sum_{\substack{m \in \Z, \\ l(g) \geq |m|,\\ m-l(g) \text{ even}}}t^{l(g)} \ +\ \sum_{g \in G}\sum_{\substack{m \in \Z, \\ l(g) < |m|,\\ m-l(g) \text{ even}}}t^{|m|}\\
    & \ =\ \sum_{g \in G}(l(g) + 1) t^{l(g)} \ +\ \sum_{g \in G}\sum_{\substack{m \in \Z, \\ |m|-l(g) > 0,\\ |m|-l(g) \text{ even}}}t^{|m|-l(g)}t^{l(g)}\\
    & \ =\ \sum_{g \in G}l(g) t^{l(g)} \ +\ \sum_{g \in G} t^{l(g)} \ +\ \sum_{g \in G}\sum_{i \in \N}2t^{2i}t^{l(g)}\\
    & \ =\ t \cdot \frac{\DD}{\DD \! t} \GS_{G,{C}}(t) \ +\ \sum_{g \in G}(1+\sum_{i \in \N}2t^{2i})t^{l(g)}
    \\
    & \ =\ t \cdot \frac{\DD}{\DD \! t} \GS_{G,{C}}(t) \ +\ \frac{1 + t^2}{1 - t^2} \cdot \GS_{G,{C}}(t). \qedhere 
\end{align*}
\end{proof}

\begin{rem}
Note that for a finite group $G$, in the analysis detailed in the proof, only the case $l(g) \leq |m|$ is possible for $|m|$ large enough. Thus 
\begin{align*}
    \GS_{\As(C)}(t) & \ =\ \sum_{m \in \Z} \frac{\#G}{2}t^{|m|} \ + \ \text{ some polynomial }\\
    & \ =\ \sum_{m \in \N} 2\frac{\#G}{2}t^{m} \ + \ \text{ some polynomial }\\
    & \ =\ \frac{\#G}{1-t} \ + \ \text{ some polynomial}.
    \end{align*}
    Thus all coefficients in the series, except for a finite number, equal  $\#G$.
\end{rem}

\section{How structure groups of permutation- and dihedral-type solutions grow}\label{S:GroupsPerm}

In this section, we explain how to compute the growth series of the structure group of the entire permutation solution $\Sym_d$ and dihedral solution $\D_d$. Again, we will work in a more general context, that of a finite $\CBar$-group $G$ with commutator length $1$. That is, each element in the commutator subgroup $[G,G]$ is a simple commutator $[a,b] = aba^{-1}b^{-1} = (a \qu b)b^{-1}$. Typical examples are symmetric groups (see \cite[Theorem 1]{Ore_Commutators}) and odd-order dihedral groups (for which the commutator subgroup is the subgroup of all rotations, which are always products of two reflections).

We will need the following notations. Given a group $G$, denote by $c$ the number of its conjugacy classes, and by $\SetConjClasses = \{ \mathcal{C}_i: 0 \leq i \leq c-1 \}$ the set of its conjugacy classes. We impose $\mathcal{C}_0 = \{ 1 \}$. Given a sequence of conjugacy classes $\mathcal{C}_{i_j}$ ($1 \leq j \leq n$), we define
\[
\prod_{j=1}^{n} \mathcal{C}_{i_j} = \left\{ g_1g_2 \ldots g_j \ : \ g_j \in \mathcal{C}_{i_j} (1 \leq j \leq n)  \right\}.
\]
An empty product, meaning that $n = 0$, is furthermore to be understood as $\{ 1 \}$. As each factor is invariant under conjugation, this product is independent of the specific order of factors. Furthermore, for some conjugacy class $\mathcal{C} = \mathcal{C}_i$ and an integer $k$, we define
\[
\mathcal{C}^k = \begin{cases}
    \prod_{j=1}^k \mathcal{C} & k \geq 0 \\
    \prod_{j=1}^{-k} \mathcal{C}^{-1} & k < 0,
\end{cases}
\]
where $\mathcal{C}^{-1} = \{ g^{-1}: g \in \mathcal{C} \}$.

\begin{defn}
The \emph{defect measure} of a finite group $G$ with a fixed order on its conjugacy classes is the map
\begin{align*}
    \delta_G \colon \Z^{c-1} &\longrightarrow \Z_{\geq 0},\\
\kbar &\longmapsto \#[G,G] - \# \left( \prod_{i=1}^{c-1} \mathcal{C}_i^{k_i}   \right).
\end{align*}  
Note that $\mathcal{C}_0$ is excluded from the above product. Furthermore, the \emph{defect series} of $G$ is defined as
\[
\Delta_G(t) = \sum_{\kbar \in \Z^{c-1}} \delta_G(\kbar) \cdot t^{|\kbar|},
\]
where $|\kbar| = \sum_{i=1}^{c-1}|k_i|$.
\end{defn}
Contrary to defect measures, the defect series is independent of the order on conjugacy classes of $G$. We remark here that $\delta_G$ takes only non-negative values as under the canonical homomorphism $\Ab: G \twoheadrightarrow G^{\mathrm{ab}}$, any product of conjugacy classes in $G$ is mapped to a singleton; therefore, the size of such a product cannot exceed $\#[G,G]$.

We now state the main result of this section:

\begin{thm} \label{thm:growth_series_by_defect_terms}
    Let $G$ be a finite $\CBar$-group with commutator length $1$, and with $c$ conjugacy classes. Then the growth series of the structure group of $(G, r_{\qu})$ can be computed as follows:
    \begin{equation} \label{eq:growth_series_with_defect}
        \GS_{\As(G)}(t) = \#[G,G] \cdot \left( \frac{1+t}{1-t} \right)^c - (1+t)^2 \cdot \Delta_G(t).
    \end{equation}
\end{thm}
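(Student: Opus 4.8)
The plan is to realize $\As(G)$ concretely as a subgroup of $G \times \Z^c$ and then count elements by length. Since $G$ is a $\CBar$-group with $c$ conjugacy classes $\mathcal{C}_0 = \{1\}, \mathcal{C}_1, \ldots, \mathcal{C}_{c-1}$, the abelianization of $\As(G)$ should be $\Z^c$, with the $i$-th coordinate recording the total ``$\mathcal{C}_i$-weight'' of a word in the generators $e_x$. Following the strategy of \cite{LebedConj} that underlies \eqref{E:AsIntoDirectProduct}, I would first establish an injective group morphism
\begin{align*}
    \As(G) &\hookrightarrow G \times \Z^{c},\\
    e_x &\mapsto (x, \mathbf{1}_{[x]}),
\end{align*}
where $\mathbf{1}_{[x]}$ is the standard basis vector indexed by the conjugacy class of $x$, and identify its image. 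The image should consist of the pairs $(g, \kbar')$ with $\kbar' = (k_0, k_1, \ldots, k_{c-1}) \in \Z^c$ subject to: (a) a parity/consistency condition tying $\sum_i k_i$ (or an appropriate refinement) to $g$ via the abelianization, and (b) the condition that $g$ actually lies in the product $\prod_{i=1}^{c-1}\mathcal{C}_i^{k_i}$ up to the commutator subgroup — this is where commutator length $1$ enters, guaranteeing that whenever the abelianization is compatible, $g$ is genuinely realized. Here I expect to use that $[G,G]$ consists of single commutators $[a,b] = (a\qu b)b^{-1}$, so that any element of a coset of $[G,G]$ in the right product can be hit.

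Next I would compute the length function on the image. The generators $(x, \mathbf{1}_{[x]})$ each cost $1$, so a word of length $n$ produces a pair whose second coordinate $\kbar'$ satisfies $|\kbar'| := \sum_{i}|k_i| \leq n$, with $n - |\kbar'|$ even (each letter contributes $\pm 1$ to exactly one coordinate). As in the proof of Theorem \ref{thm:relating_minor_and_major_growth_series}, the relations $c^p = 1$ in $G$ let one pad a word with ``neutral'' syllables $e_x^{\pm p}$ that only move the $\Z^c$-coordinate, so the minimal length of $(g, \kbar')$ should be exactly $\max\bigl(\ell(g,\kbar'),\ |\kbar'|\bigr)$ adjusted for parity — more precisely, it is $|\kbar'|$ when $g$ is reachable from words of weight $\kbar'$ (equivalently $g \in \prod_i \mathcal{C}_i^{k_i}$) and one more step otherwise, when a correction is forced. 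Summing $t^{l}$ over the image and organizing the sum by the value of $\kbar' = (k_1, \ldots, k_{c-1}) \in \Z^{c-1}$ (the coordinate $k_0$ being forced modulo $2$ and otherwise free, contributing the factor $\frac{1+t}{1-t}$), the main term collects, for each $\kbar$, all $\#[G,G]$ possible abelianization-compatible values of $g$ at cost $t^{|\kbar|}$ times a geometric factor $\bigl(\frac{1+t}{1-t}\bigr)^{c-1}$ from the free $k_0$ and from padding; the correction subtracts, for each $\kbar$, the $\delta_G(\kbar) = \#[G,G] - \#\prod_i \mathcal{C}_i^{k_i}$ group elements that are \emph{not} realizable at the cheap cost $|\kbar|$ and hence get counted one length-unit too high, each such discrepancy producing a factor $(1+t)^2$ (from $t^{|\kbar|+1} \cdot \frac{1}{t^{|\kbar|}}$ against the geometric corrections, matching the $(1+t)^2$ in \eqref{eq:growth_series_with_defect}).

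The main obstacle, I expect, is pinning down the image of $\As(G) \hookrightarrow G \times \Z^c$ and proving the length formula $l(g,\kbar') = \max(\ldots)$ precisely, including the exact parity bookkeeping and the claim that when $g \notin \prod_i \mathcal{C}_i^{k_i}$ the cost is exactly $|\kbar| + $ (a small fixed amount) rather than more. This is the step that genuinely uses commutator length $1$: it ensures the ``obstruction'' to writing $g$ at weight $\kbar$ is always resolvable by a single extra generator (adjusting one commutator), never more, so that the defect contributes linearly with a uniform factor $(1+t)^2$. Once the image and length formula are in hand, the remaining manipulation — splitting the sum over $G \times_{?} \Z^c$ into a ``free'' part giving $\#[G,G]\bigl(\frac{1+t}{1-t}\bigr)^c$ and a defect correction giving $-(1+t)^2 \Delta_G(t)$ — is a bookkeeping computation entirely parallel to the one carried out in the proof of Theorem \ref{thm:relating_minor_and_major_growth_series}.
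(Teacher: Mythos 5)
Your overall strategy is the same as the paper's: embed $\As(G)$ (equivalently, split off the central $\Z$ generated by $e_1$ and embed $\As(G_0)$, $G_0=G\setminus\{1\}$, into $G\times\Z^{c-1}$ via \cite{LebedConj}), identify the image as the pullback over $G^{\mathrm{ab}}$, determine the minimal length of each pair $(g,\kbar)$, and count defects classwise. But there is a concrete error at the key step, the length formula. You claim that when $g\notin\prod_{i=1}^{c-1}\mathcal{C}_i^{k_i}$ the cost is ``one more step'', resolvable by ``a single extra generator''. This contradicts the parity observation you yourself make: every generator $e_x^{\pm1}$ contributes exactly $\pm1$ to exactly one $\Z$-coordinate, so $l(g,\kbar)\equiv|\kbar|\pmod 2$, and the correction must be even. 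The correct statement (and the place where commutator length $1$ is used) is that the correction is exactly $2$: picking $h$ with $l(h,\kbar)=|\kbar|$, one has $gh^{-1}\in[G,G]$, hence $gh^{-1}=[a,b]=(a\qu b)b^{-1}$, and $(g,\kbar)=e_{a\qu b}\,e_b^{-1}\,(h,\kbar)$ costs $|\kbar|+2$ --- a commutator costs two letters, not one. With the ``$+1$'' version, your bookkeeping cannot produce the factor $(1+t)^2$, and indeed your attempted explanation of it (``$t^{|\kbar|+1}\cdot\frac{1}{t^{|\kbar|}}$ against the geometric corrections'') does not cohere: the actual source is that each defect element is counted at $t^{|\kbar|+2}$ instead of $t^{|\kbar|}$, giving a correction $(t^2-1)\Delta_G(t)$ for $\As(G_0)$, which after multiplication by the central factor $\frac{1+t}{1-t}$ coming from $e_1$ becomes $-(1+t)^2\Delta_G(t)$.

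Two smaller inaccuracies in the same spirit: the proposed padding by syllables $e_x^{\pm p}$ is not ``neutral'' (it shifts the corresponding $\Z$-coordinate by $\pm p$) and is not needed here at all, since one only computes minimal lengths and these are always $|\kbar|$ or $|\kbar|+2$ --- there is no $\max$-type formula as in Theorem \ref{thm:relating_minor_and_major_growth_series}; and the coordinate $k_0$ attached to $\mathcal{C}_0=\{1\}$ is not ``forced modulo 2'' --- it is entirely free, contributing $\sum_{k_0\in\Z}t^{|k_0|}=\frac{1+t}{1-t}$, which is exactly the factored-out central $\Z$. Once you replace the ``$+1$'' claim by the correct ``$+2$'' and redo the summation, your plan becomes the paper's proof.
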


In particular, the computation of the growth series of $\As(G)$ boils down to the computation of the defect series of $G$.

\begin{proof}
    Since the element $e_1$ generates a central subgroup of $\As(G)$ that is isomorphic to $\Z$, we can factor it out, and get
    \[
    \GS_{\As(G)}(t) = \GS_{(\Z,1)}(t) \cdot \GS_{\As(G_0)}(t) = \frac{1+t}{1-t} \cdot \GS_{\As(G_0)}(t),
    \]
    where $G_0 = G \setminus \{ 1 \}$. We will thus concentrate on $G_0$ in what follows.

    As was shown in \cite{LebedConj}, the assignment
\begin{align}
    \As(G_0) &\to G \times \Z^{c-1},\label{E:AsIntoDirectProductGeneral}\\
    e_x &\mapsto (x,1_i),\notag
\end{align}
where $1_i$ generates the component in $\Z^{c-1}$ corresponding to the conjugacy class $\mathcal{C}_i$ of $x$, uniquely extends to an injective group morphism. Its image is the pullback of the homomorphisms \[\Z^{c-1} \longrightarrow G^{\mathrm{ab}}=\Z_{p_1} \times \cdots \times \Z_{p_{c-1}} \overset{\Ab}{\longleftarrow} G_0.\]
Here $p_i$ is the power in the unique power relation $a^p=1$ with $a \in \mathcal{C}_i$ if there is one, and $p_i=+\infty$ otherwise. The map on the left is the canonical projection for each of the $c-1$ factors. It will be convenient for us to identify $\As(G_0)$ with its image in $G \times \Z^{c-1}$. 

Now, the abelianisation map $\Ab \colon \As(G_0) \twoheadrightarrow \Z^{c-1}$  is surjective. Its kernel is
\begin{equation}\label{E:KerAb}
  \Ab^{-1}(0) = [G,G] \times \{ 0 \} \subset G \times \Z^{c-1}.  
\end{equation}

We need to compute
  \[\GS_{\As(G_0)}(t) = \sum_{(g,\kbar) \in \As(G_0) }t^{l(g,\kbar)},\]
where $l(g,\kbar)$ denotes the length with respect to the generators $e_x,\ x \in G_0$. Let us show that this length is never too far from $|\kbar|$:
\[l(g,\kbar) \in \{|\kbar|,|\kbar|+2\}.\]
Since each $e_x$ contributes at most $1$ to exactly one factor in $\Z^{c-1}$, at least $|\kbar|$ such generators are needed to produce $(g,\kbar)$, hence $l(g,\kbar) \geq |\kbar|$. For the same reason, $l(g,\kbar)$ has the same parity as $|\kbar|$. If $l(g,\kbar)$ turns out to exceed $|\kbar|$, then pick any $h \in G$ with $l(h,\kbar) = |\kbar|$. As
\[(gh^{-1},0) = (g,\kbar)(h,\kbar)^{-1}  \in \Ab^{-1}(0) = [G,G] \times \{ 0 \},\]
we have $gh^{-1} \in [G,G]$. Our assumption on the commutator length implies that $gh^{-1}$ can be presented as $[a,b] =  (a \qu b)b^{-1}$. Since $a \qu b$ and $b$ lie in the same conjugacy class, say $\mathcal{C}_j$ (with $j \neq 0$), one can write 
\[(gh^{-1},0) = (a \qu b,1_j)(b,1_j)^{-1} = e_{a \qu b} e_b^{-1}.\]
Therefore,
\[(g,\kbar) = (gh^{-1},0) (h,\kbar) = (a \qu b,1_j)(b,1_j)^{-1}(h,\kbar)\]
can be written as a product of $|\kbar|+2$ generators and their inverses. Together with the parity remark above, this yields $l(g,\kbar) = |\kbar|+2$, as announced.

Now, the above analysis shows that the option $l(g,\kbar) = |\kbar|$ actually means that $g$ is realisable as a product of $|k_i|$ elements from each conjugacy class $\mathcal{C}_i$ (if $k_i>0$) or of inverses thereof (if $k_i<0$). This can be translated as
 $g \in \prod_{i=1}^{c-1}\mathcal{C}_i^{k_i}$. Thus, for a fixed $\kbar \in \Z^{c-1}$, the number of elements $(g,\kbar)$ realising this generic option is $\#\left( \prod_{i=1}^{c-1} \mathcal{C}_i^{k_i}   \right)$. Then the number of elements $(g,\kbar)$ realising the defect option $l(g,\kbar) = |\kbar|+2$ is \[ \#\Ab^{-1}(\kbar) - \# \left( \prod_{i=1}^{c-1} \mathcal{C}_i^{k_i}   \right) = \#[G,G]- \# \left( \prod_{i=1}^{c-1} \mathcal{C}_i^{k_i}   \right) = \delta_G(\kbar).\] 
 Indeed, \cref{E:KerAb} together with the surjectivity of $\Ab$ implies that all fibers of $\Ab$ have the same cardinality, $\#[G,G]$.

Summarising, we get
\begin{align*}
\GS_{\As(G_0)}(t) & = \sum_{\kbar \in \Z^{c-1}} ( \#[G,G]-\delta_G(\kbar) )t^{|k|} + \delta_G(\kbar)t^{|\kbar|+2} \\
        & = \#[G,G] \cdot \sum_{\kbar \in \Z^{c-1}} t^{|\kbar|} + (t^2-1) \cdot \sum_{\kbar \in \Z^{c-1}} \delta_G(\kbar) t^{|\kbar|} \\
        & = \#[G,G] \cdot \left( \frac{1+t}{1-t} \right)^{c-1} + (t^2-1) \Delta_G(t).
    \end{align*}
Multiplying by the missing factor $\GS_{(\Z,1)}(t) = \frac{1+t}{1-t}$, one recovers the desired formula for the entire growth series $\GS_{\As(G)}(t)$.
\end{proof}

To apply \cref{thm:growth_series_by_defect_terms} to our favourite groups $G=\Sym_d$, we need a thorough study of their defects. First, in this case each conjugacy class $\mathcal{C}$ satisfies $\mathcal{C}^{-1} = \mathcal{C}$, hence
\begin{equation}\label{E:signs}
\delta_{\Sym_d}(k_1,k_2,\ldots,k_{c-1}) = \delta_{\Sym_d}(|k_1|,|k_2|,\ldots,|k_{c-1}|),  
\end{equation}
i.e. the defect measure is independent of the signs of its arguments. This simplifies the computation of the defect series:
\begin{equation} \label{eq:defect_series_of_Sn}
    \Delta_{\Sym_d}(t) = \sum_{\kbar \in \Z_{\geq 0}^{c-1}} 2^{\#\{ i \in \{1, \ldots, c-1 \}: k_i \neq 0 \}} \cdot \delta_{\Sym_d}(\kbar) t^{|\kbar|}.
\end{equation}

We now demonstrate how to use this to compute $\GS_{\As(\Sym_d)}(t)$ for $d=3$ and $d=4$. 

First, let us list the conjugacy classes of $\Sym_3$:
\begin{center}
\begin{tabular}{c||c|c|c}
     $i$ & $0$ & $1$ & $2$  \\
     \hline $\mathcal{C}_i$  & $\{ 1 \}$ & ${}^{\Sym_3}(1\ 2)$ & ${}^{\Sym_3}(1\ 2\ 3)$ \\
     \hline $\#\mathcal{C}_i$ & $1$ & $3$ & $2$ 
\end{tabular}
\end{center}
Here $c=3$, and $[\Sym_3,\Sym_3]=\AA_3=\mathcal{C}_0 \cup \mathcal{C}_2$ is of size $3$. We need to determine all non-zero defects $\delta_{\Sym_3}(\kbar)$ for $k_i \geq 0$. To do this, it is essential to understand the products of all pairs of conjugacy classes, which are necessarily unions of conjugacy classes. We therefore build a multiplication table whose $i,j$-entry contains the indices of the conjugacy classes contained in $\mathcal{C}_i \cdot \mathcal{C}_j$:
\begin{center}
\begin{tabular}{c|| c| c| c}
     $\cdot$ & $0$ & $1$ & $2$  \\ \hline\hline
     $0$ & $0$ & $1$ & $2$  \\\hline
     $1$ & $1$ & $0,2$ & $1$ \\\hline
     $2$ & $2$ & $1$ & $0,2$ 
\end{tabular}
\end{center}
From this table, one reads that $\#\left( \prod_{i=1}^{2} \mathcal{C}_i^{k_i}   \right) = 3 = \#[\Sym_3,\Sym_3]$ whenever $|\kbar| = k_1+k_2 \geq 2$. That is, the defect measure $\delta_{\Sym_3}(\kbar)$ vanishes for such $\kbar$. For the remaining entries, we compute $\delta_{\Sym_3}$ as follows:
\begin{align*}
    \delta_{\Sym_3}(0,0)&=3-1=2,\\
    \delta_{\Sym_3}(1,0)&=3-\#\mathcal{C}_1=3-3=0,\\
    \delta_{\Sym_3}(0,1)&=3-\#\mathcal{C}_2=3-2=1.
\end{align*}
Thus 
\[\Delta_{\Sym_3}(t) =  2^0 \cdot \delta_{\Sym_3}(0,0) t^{0+0} + 2^1 \cdot \delta_{\Sym_3}(1,0) t^{1+0} + 2^1 \cdot \delta_{\Sym_3}(0,1) t^{0+1} =2+2t,\]
and
\begin{align*}
\GS_{\As(\Sym_3)}(t) &= 3 \cdot \left( \frac{1+t}{1-t} \right)^3 - (1+t)^2 \cdot (2+2t) = 3 \cdot \left( \frac{1+t}{1-t} \right)^3 - 2 \cdot (1+t)^3\\
&= (1+t)^3 \cdot \left( \frac{3}{\left(1-t \right)^3} - 2 \right) .    
\end{align*}

For $d=4$, the $c=5$ conjugacy classes are as follows:
\begin{center}
\begin{tabular}{c||c|c|c|c|c}
     $i$ & $0$ & $1$ & $2$ & $3$ & $4$ \\
     \hline $\mathcal{C}_i$  & $\{ 1 \}$ & ${}^{\Sym_4}(1\ 2)$ & ${}^{\Sym_4}(1\ 2)(3\ 4)$ & ${}^{\Sym_4}(1\ 2\ 3)$ & ${}^{\Sym_4}(1\ 2\ 3\ 4)$ \\
     \hline $\#\mathcal{C}_i$ & $1$ & $6$ & $3$ & $8$ & $6$     
\end{tabular}
\end{center}
The commutator subgroup $[\Sym_4,\Sym_4]=\AA_4=\mathcal{C}_0 \cup \mathcal{C}_2 \cup \mathcal{C}_3$ is of size $12$. The multiplication table for its conjugacy classes is
\begin{center}
\begin{tabular}{c| c c c c c}
     $\cdot$ & $0$ & $1$ & $2$ & $3$ & $4$ \\ \hline
     $0$ & $0$ & $1$ & $2$ & $3$ & $4$ \\
     $1$ & $1$ & $0,2,3$ & $1,4$ & $1,4$ & $2,3$ \\
     $2$ & $2$ & $1,4$ & $0,2$ & $3$ & $1,4$ \\
     $3$ & $3$ & $1,4$ & $3$ & $0,2,3$ & $1,4$ \\
     $4$ & $4$ & $2,3$ & $1,4$ & $1,4$ & $0,2,3$
\end{tabular}
\end{center}
In this table appears a phenomenon which makes $\Sym_4$ special among symmetric groups. We have $\mathcal{C}_2^m = \mathcal{C}_2 \cup \mathcal{C}_0$ (of size $4<12$) for all $m \geq 2$, and $\mathcal{C}_2^m \cdot \mathcal{C}_3 = \mathcal{C}_3$ (of size $8<12$) for all $m \geq 1$. In particular, the defect series $\Delta_{\Sym_4}$ is not a polynomial. Also, $\mathcal{C}_1 \cdot \mathcal{C}_4 = \mathcal{C}_2 \cup \mathcal{C}_3$ is of size $11$. Other multiple products of conjugacy classes yield either $\mathcal{C}_0 \cup \mathcal{C}_2 \cup \mathcal{C}_3$ or $\mathcal{C}_1 \cup \mathcal{C}_4$, and are of size $12 = \#[\Sym_4,\Sym_4]$. We summarise non-trivial defects in the following table:
\begin{center}
\begin{tabular}{c|c}
     $\kbar$ & $\delta_{\Sym_4}(\kbar)$ \\ \hline
     $(0,0,0,0)$ & $11$  \\
     $(1,0,0,0)$ &  $6$ \\
     $(0,1,0,0)$ & $9$  \\
     $(0,0,1,0)$ & $4$ \\
     $(0,0,0,1)$ & $6$ \\
     $(1,0,0,1)$ & $1$ \\
     $(0,m,0,0), m \geq 2$ & $8$ \\
     $(0,m,1,0), m \geq 1$ & $4$
\end{tabular}
\end{center}

Using \cref{eq:defect_series_of_Sn}, we can now compute:
\begin{equation*}
    \Delta_{\Sym_4}(t) = 11 + 2^1 \cdot (6+9+4+6)t + 2^2 \cdot t^2 + (2^1 \cdot 8 + 2^2 \cdot 4) \cdot  \sum_{m=2}^{\infty}t^m = 11 + 50t + 4t^2 + \frac{32t^2}{1-t},
\end{equation*}
hence
\begin{align*}
\GS_{\As(\Sym_4)}(t) &= 12 \cdot \left( \frac{1+t}{1-t} \right)^5 - (1+t)^2 \cdot \left(11 + 50t + 4t^2 + \frac{32t^2}{1-t}\right).
\end{align*}

Note that for $d \geq 3$ with $d \neq 4$, the simplicity of $\AA_d = [\Sym_d,\Sym_d]$ guarantees that for any nontrivial conjugacy class $\{ 1 \} \neq \mathcal{C}$ of $\Sym_d$, we have $\mathcal{C}^{2m} = \AA_d$ for $m$ large enough, hence $\delta_{\Sym_d}(\kbar) = 0$ for  sufficiently large $|\kbar|$. In particular, $\Delta_{\Sym_d}(t)$ is a \emph{polynomial} that can be computed explicitly.

The table below contains the numerators of $\GS_{\As(\Sym_d)}(t)$ (the denominators being $(1-t)^c$), obtained using a computer program\footnote{A link to the \texttt{GAP}-file is available under \texttt{https://sites.google.com/view/carstendietzel/} in the respective entry in the list of publications}: 

\medskip
\begin{tabular}{c|l|c} 
   $d$  & \hspace*{1cm} numerator of $\GS_{\As(\Sym_d)}(t)$  \\
   \hline
    $5$ & $4t^{12}+92t^{11}-46t^{10}-1455t^{9}+3505t^{8}-980t^{7}-4760t^{6}+7150t^{5}+2050t^{4}-1200t^{3}$\\&$+3086t^{2}+233t+1$ \\
    $6$ & $8t^{17}+640t^{16}+1836t^{15}-42306t^{14}+172997t^{13}-294051t^{12}+93174t^{11}+467324t^{10}$\\
& $-728893t^{9}+339031t^{8}+530288t^{7}-368178t^{6}+339579t^{5}+214699t^{4}-41778t^{3}$\\
& $+51480t^{2}+1429t+1$ \\
    $7$ & $4t^{22}+1500t^{21}-9052t^{20}+59768t^{19}-604186t^{18}+3616477t^{17}-12219475t^{16}+23927860t^{15}$\\
& $-22726364t^{14}-7980432t^{13}+55843840t^{12}-65892060t^{11}+35195992t^{10}+44917674t^{9}$\\
& $-38168278t^{8}+47903548t^{7}+11652920t^{6}-3556516t^{5}+11546372t^{4}-1720204t^{3}$\\
& $+775906t^{2}+10065t+1$ \\
    $8$ & $-4t^{30}-10144t^{29}+134812t^{28}-972046t^{27}+5044568t^{26}-14288042t^{25}-34866279t^{24}$\\
& $+606830872t^{23}-3360342688t^{22}+11590895976t^{21}-27738090666t^{20}+46699079226t^{19}$\\
& $-50843972868t^{18}+21390854702t^{17}+43714754431t^{16}-99390934656t^{15}+121989261436t^{14}$\\
& $-58116641248t^{13}+8695013472t^{12}+67561029126t^{11}-46119135216t^{10}+47606045586t^{9}$\\
& $-7722313273t^{8}+5034722152t^{7}+3123671976t^{6}-634489832t^{5}+555116750t^{4}-53617970t^{3}$\\
& $+14307868t^{2}+80618t+1$      
\end{tabular}

\medskip
Let us next turn to the dihedral group $G=\D_d$ for odd $d$. It is of size $2d$. Its conjugacy classes are $\mathcal{C}_0 = \{1\}$, $\mathcal{C}_1=\R_d$ (the set of $d$ reflections), and $\mathcal{C}_i=\{\rho^{i-1},\rho^{1-i}\}$ for $i=2,\ldots, \frac{d+1}{2}$. Here $\rho$ is the rotation by the angle $\frac{2\pi}{d}$. This yields $\frac{d+3}{2}$ classes in total. Omitting the reflection class $\mathcal{C}_1$, we get the subgroup $\operatorname{Rot}_d$ of all the $d$ rotation in $\D_d$, which is isomorphic to the cyclic group $\Z_d$. It turns out to be the derived subgroup of $\D_d$: $[\D_d,\D_d] = \operatorname{Rot}_d$. Relation $\mathcal{C}^{-1} = \mathcal{C}$ for all conjugacy classes allows us to compute the defect measures for positive entries only, and to use the relations \eqref{E:signs} and \eqref{eq:defect_series_of_Sn}.

We are now ready to compute $\GS_{\As(\D_5)}(t)$. (The case $d=3$ is of no interest since $\D_3=\Sym_3$.) The multiplication table for the conjugacy classes of $\D_5$ looks as follows:
\begin{center}
\begin{tabular}{c|| c| c| c| c}
     $\cdot$ & $0$ & $1$ & $2$ & $3$ \\ \hline\hline
     $0$ & $0$ & $1$     & $2$   & $3$ \\\hline
     $1$ & $1$ & $0,2,3$ & $1$   & $1$\\\hline
     $2$ & $2$ & $1$     & $0,3$ & $2,3$\\\hline
     $3$ & $3$ & $1$     & $2,3$ & $0,2$ 
\end{tabular}
\end{center}
We thus have $\#\left( \prod_{i=1}^{3} \mathcal{C}_i^{k_i}   \right) = 5 = \#[\D_5,\D_5]$ whenever $k_1 \geq 1$, or $k_2 \geq 4$, or $k_3 \geq 4$, or $k_2 + k_3 \geq 3$ and $k_2k_3 \neq 0$. For the remaining entries, we compute $\delta_{\Sym_3}$ as follows:
\begin{align*}
    \delta_{\D_5}(0,0,0)&=5-1=4,\\
    \delta_{\D_5}(0,1,0)&=5-\#\mathcal{C}_2=5-2=3,\\
    \delta_{\D_5}(0,0,1)&=5-\#\mathcal{C}_3=5-2=3,\\
    \delta_{\D_5}(0,1,1)&=5-\#(\mathcal{C}_2 \times \mathcal{C}_3)=5-4=1,\\
    \delta_{\D_5}(0,2,0)&=\delta_{\D_5}(0,0,2)=5-3=2,\\
    \delta_{\D_5}(0,3,0)&=\delta_{\D_5}(0,0,3)=5-4=1.\\
\end{align*}
Thus  
\begin{equation*}
    \Delta_{\D_5}(t) = 4 + 2^1 \cdot (3+3)t + 2^2 \cdot t^2 + 2^1 \cdot (2+2)t^2 + 2^1 \cdot (1+1)t^3 = 4+12t+12t^2+4t^3,
\end{equation*}
and
\begin{align*}
\GS_{\As(\D_5)}(t) &= 5 \cdot \left( \frac{1+t}{1-t} \right)^4 - (1+t)^2 \cdot \left( 4+12t+12t^2+4t^3\right) = 5 \cdot \left( \frac{1+t}{1-t} \right)^4 - 4(1+t)^5.
\end{align*}

%THIS IS VERY SIMILAR TO THE FORMULA FOR $\GS_{\As(\D_3, r_{\qu})}(t)$. IS THIS A COINCIDENCE? 

For $d=7$, we get 
\begin{align*}
\GS_{\As(\D_7)}(t) &= 7 \cdot \left( \frac{1+t}{1-t} \right)^5 - 6(1+t)^7 + 6t^3(1+t)^3.
\end{align*}

For $d=9$
\begin{equation*}
    \Delta_{\D_9}(t) =8(1+7t+21t^2+35t^3)-10t^3+ \text{ higher terms}.
\end{equation*}

Our last observation is that the defect series $\Delta_{\D_d}(t)$ does not need to be a polynomial in general. For instance for $d=9$, the classes $\mathcal{C}_0 = \{1\}$  and $\mathcal{C}_4=\{\rho^{3},\rho^{-3}\}$ generate a subgroup of $3$ rotations inside $\D_9$ (a copy of $\Z_3$ inside $\operatorname{Rot}_9 \cong \Z_9$), hence $\delta_{\D_9}(0,0,0,k,0) = 9-3 = 6 \neq 0$ for all $k \geq 2$.

\section{How structure monoids of transposition solutions grow}\label{S:MonoidsTransp}

Let us now turn to the growth series of the structure monoid $\As^+(\T_d)$ of the transposition solution. The basic ingredient of our computations in the group case was the injection \eqref{E:AsIntoDirectProduct} of $\As(\T_d)$ into a more accessible group, the direct product $\Sym_d \times \Z$. The corresponding map is no longer injective for the monoid. Indeed, the elements $e_x^2$, $x \in \T_d$, are all \emph{frozen} in the monoid $\As^+(\T_d)$ (since no non-trivial braiding moves apply to them) and thus pairwise distinct, whereas the map \eqref{E:AsIntoDirectProduct} sends them all to $(\Id, 2) \in \Sym_d \times \Z$. However, the situation changes in the presence of other generators.  Concretely, for all $x,y \in \T_d$ we have 
\begin{equation}\label{E:e2Central}
    e_x^2e_y = e_xe_{x \qu y}e_x = e_{x \qu (x \qu y)}e_xe_x = e_{x^2 \qu y}e_x^2 = e_ye_x^2,
\end{equation}
hence the squares of all generators are central in $\As^+(\T_d)$. On the other hand,
\begin{equation}\label{E:e2Equal}
    e_ye_x^2 = e_{y \qu x}e_ye_x = e_{y \qu x}e_{y \qu x}e_y = e_{y \qu x}^2 e_y.
\end{equation}
Since $\T_d$ is a single conjugacy class in $\Sym_d$, relations \eqref{E:e2Central}-\eqref{E:e2Equal} imply that the squares $e_x^2$ are all equal in the group $\As(\T_d)$, whereas in the monoid $\As^+(\T_d)$ they are equal only in the presence of appropriate generators. To make this statement precise, we need the following definition.

\begin{defn}
    Let $w$ be a word in the letters $e_x$, $x \in \T_d$. The (undirected) \emph{graph} $\Gamma_w$ of $w$ is constructed on the vertices $1,2, \ldots, d$ by connecting $i$ and $j$ whenever the letter $e_{(i,j)}$ appears in $w$ at least once. Here, for each transposition we allow two notations, $(i,j)$ and $(j,i)$. The connected components of $\Gamma_w$ yield a partition of the set $\{1,2, \ldots, d\}$, called the \emph{partition} of $w$. A component is considered to be \emph{trivial} if it is a singleton.
\end{defn}

\begin{lem}
    Two words in the letters $e_x$, $x \in \T_d$, representing the same element of the monoid $\As^+(\T_d)$, have the same partition.
\end{lem}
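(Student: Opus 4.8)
The plan is to show that the partition of a word is invariant under the defining relations of the monoid $\As^+(\T_d)$, since two words represent the same element precisely when they are linked by a finite sequence of such relations (applied inside larger words). Because the partition is determined by the connected components of the graph $\Gamma_w$, which in turn is determined by the \emph{set} of transpositions whose generators occur in $w$, it suffices to check that a single application of a relation $e_x e_y = e_{\lambda_x(y)} e_{\rho_y(x)}$, i.e. $e_a e_b = e_{a \qu b}\, e_a$ for $a,b \in \T_d$, does not change the graph (up to adding isolated vertices, which do not affect the nontrivial components).

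First I would set up the reduction: if $w$ and $w'$ represent the same element, then $w'$ is obtained from $w$ by finitely many moves of the form $u (e_a e_b) v \rightsquigarrow u (e_{a\qu b} e_a) v$ (or its reverse), where $u,v$ are arbitrary words. Since $\Gamma_{w}$ only records which letters appear, and $u,v$ contribute the same edges on both sides, it is enough to compare the contribution of the two-letter blocks $e_a e_b$ and $e_{a \qu b} e_a$. Writing $a = (i\ j)$ and $b = (k\ l)$, the left block contributes the edges $\{i,j\}$ and $\{k,l\}$, while the right block contributes $\{k',l'\}$ and $\{i,j\}$, where $(k'\ l') = a \qu b = a b a^{-1}$ is the transposition obtained by applying the permutation $a$ to the pair $\{k,l\}$, namely $\{k',l'\} = \{a(k), a(l)\}$.

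The key step is then the elementary observation that, in the graph $\Gamma$ which already contains the edge $\{i,j\}$ (coming from $e_a$, present in both blocks), adding the edge $\{k,l\}$ yields the same connected components as adding the edge $\{a(k),a(l)\}$. Indeed, the transposition $a$ fixes every vertex outside $\{i,j\}$ and swaps $i$ with $j$; since $i$ and $j$ already lie in the same component (they are joined by the edge $\{i,j\}$), the vertex $k$ and the vertex $a(k)$ lie in the same component of $\Gamma \cup \{i,j\}$ — they are either equal, or $\{k, a(k)\} = \{i,j\}$. The same holds for $l$ and $a(l)$. Hence $k$ is connected to $l$ in $\Gamma \cup \{\{i,j\},\{k,l\}\}$ if and only if $a(k)$ is connected to $a(l)$ in $\Gamma \cup \{\{i,j\},\{a(k),a(l)\}\}$, and more precisely the two augmented graphs have identical partitions into connected components. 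This shows the partition is preserved by every elementary move and hence by the congruence generated by them, which is exactly equality in $\As^+(\T_d)$.

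The only mild subtlety — the part I would be most careful about — is the bookkeeping of \emph{trivial} components: an elementary move can in principle make a letter $e_{(i,j)}$ disappear from the word (if it occurred only in the block being rewritten) or appear (in the reverse direction), which could turn a vertex from isolated to non-isolated or vice versa. But this never merges or splits any \emph{nontrivial} component, because the block $e_a e_b$ and the block $e_{a\qu b} e_a$ always involve the \emph{same} vertex set $\{i,j,k,l\} = \{i,j,a(k),a(l)\}$ (as $a$ permutes this four-element set, $\{i,j\}$ being $a$-invariant), so exactly the same vertices are touched on both sides. Since the partition of a word is defined via the connected components of $\Gamma_w$ on the full vertex set $\{1,\dots,d\}$, and singletons are allowed as (trivial) blocks, the partitions of $w$ and $w'$ coincide on the nose. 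This completes the argument.
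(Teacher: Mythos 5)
Your proof is correct and follows essentially the same route as the paper: one checks that each application of a defining relation $e_ae_b = e_{a\qu b}e_a$ leaves the connected components of the graph unchanged, the paper doing this by a short case split (disjoint versus overlapping transpositions) and you by the uniform observation that $a$ only exchanges $i$ and $j$, which already lie in a common component. Your extra care about the vertex sets and trivial components is fine but not needed beyond what the paper records.
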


\begin{proof}
    Indeed, the defining relations $e_{(i,j)} e_{(k,l)}= e_{(k,l)}e_{(i,j)}$ and $e_{(i,j)} e_{(k,j)}= e_{(k,i)}e_{(i,j)}$ do not change the connected components of the graph from the above definition. Here the indices $i,j,k,l$ are all distinct.
\end{proof}

Thus, one can talk about the partition $\pp(a) \in \ppart_d$ (the set of all partitions of the set $\{1,2, \ldots, d\}$) of an element $a \in \As^+(\T_d)$. This yields a map 
\[\pp \colon \As^+(\T_d) \to \ppart_d.\]

Recall that the \emph{join} $\pi \vee \rho$ of two partitions $\pi$ and $\rho$ is obtained by gluing together all intersecting parts from $\pi$ and $\rho$. Note that $\pi \vee \rho$ is the finest partition that is coarser than $\pi$ and $\rho$, therefore the join describes the semilattice structure of $\ppart_d$ with respect to the refinement relation. It is called the \emph{partition semilattice of rank $d$}.

\begin{pro}\label{P:Lattice}
    The monoid $\As^+(\T_d)$ splits as a disjoint union of its subsemigroups $\pp^{-1}(\pi)$, $\pi \in \ppart_d$. Moreover, for any elements $a \in \pp^{-1}(\pi)$ and $b \in \pp^{-1}(\rho)$, their product $ab$ lies in $\pp^{-1}(\pi \vee \rho)$.
\end{pro}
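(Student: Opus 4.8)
The plan is to verify the two assertions of the proposition in turn, both of which should follow in a fairly direct way from the lemma preceding it. For the first assertion—that $\As^+(\T_d)$ is the disjoint union of the fibers $\pp^{-1}(\pi)$ and that each fiber is a subsemigroup—the disjointness is automatic since $\pp$ is a well-defined map (this is precisely the content of the lemma, which guarantees that $\pp(a)$ does not depend on the chosen representative word). So the only thing to check here is closure: if $a \in \pp^{-1}(\pi)$ and $b \in \pp^{-1}(\rho)$ with $\pi = \rho$, then $\pp(ab) = \pi$ as well. But this will drop out of the stronger second assertion once we show $\pp(ab) = \pp(a) \vee \pp(b)$, since $\pi \vee \pi = \pi$.

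For the second assertion, I would argue at the level of words. Let $u$ be a word representing $a$ and $v$ a word representing $b$; then the concatenation $uv$ represents $ab$. The key observation is that the graph $\Gamma_{uv}$ has edge set equal to the union of the edge sets of $\Gamma_u$ and $\Gamma_v$ (a letter $e_{(i,j)}$ appears in $uv$ iff it appears in $u$ or in $v$), both on the same vertex set $\{1, \ldots, d\}$. So the connected components of $\Gamma_{uv}$ are obtained by merging any components of $\Gamma_u$ and $\Gamma_v$ that share a vertex—which is exactly the description of the join $\pp(a) \vee \pp(b)$ of the two partitions. Invoking the lemma once more to see that $\pp(ab)$ is well-defined (independent of the representative $uv$), we conclude $\pp(ab) = \pp(a) \vee \pp(b)$, giving both the semilattice compatibility and, as noted above, the closure of each $\pp^{-1}(\pi)$ under multiplication.

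I do not expect any serious obstacle here; the proposition is essentially a bookkeeping consequence of the lemma together with the elementary fact that the graph of a concatenation is the edge-union of the graphs of the factors. The one point that deserves an explicit sentence is why $\pp^{-1}(\pi)$ is a \emph{sub}semigroup rather than merely a subset closed under the induced operation—but since $\As^+(\T_d)$ is a monoid and the fibers partition it, there is nothing extra to verify beyond closure, except perhaps noting that $\pp^{-1}(\pi)$ for $\pi$ the all-singletons partition contains the identity (so is in fact a submonoid), while the other fibers are semigroups without identity. I would phrase the write-up to make clear that the compatibility $\pp(ab) = \pp(a) \vee \pp(b)$ is the single fact doing all the work.
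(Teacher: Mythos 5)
Your proposal is correct and follows the paper's own argument: the product is represented by the concatenation of representing words, its graph is the edge-union of the two graphs, so its connected components arise by gluing intersecting components, which is exactly the join; disjointness and closure of the fibers then follow from the well-definedness lemma and $\pi \vee \pi = \pi$. No gaps to report.
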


\begin{proof}
    By definition, to construct the graph of $ab$ on the vertices  $1,2, \ldots, d$, one takes as edges precisely the edges corresponding to the letters $e_{(i,j)}$ from a word $w$ representing $a$ (hence present in the graph of $w$), and the letters $e_{(i,j)}$ from a word $v$ representing $b$ (hence present  in the graph of $v$). Hence the connected components of the word $wv$ representing $ab$ are obtained by gluing together all intersecting components of the graphs of $w$ and $v$.
\end{proof}

The monoid $\As^+(\T_d)$ thus splits as a semilattice of semigroups $\pp^{-1}(\pi)$. To understand the latter, it suffices to study only the semigroups of the following particular type. 

\begin{defn}
    An element $a \in \As^+(\T_d)$ is called \emph{full} if the induced partition $\pp(a)$ has only one part, the whole set $\{1,2, \ldots, d\}$. The subsemigroup of  elements of $\As^+(\T_d)$ is called the \emph{full transposition semigroup} on $d$ elements, denoted by $\Full_d$.
\end{defn}
\begin{exa}
In $\As^+(\T_5)$, consider the elements 
\[a=e_{(1,2)}e_{(2,3)}e_{(4,5)}e_{(1,2)} \hspace*{1cm}\text{ and }\hspace*{1cm} b=e_{(2,3)}e_{(1,5)}e_{(2,5)}e_{(3,4)}e_{(1,2)}.\]
Their respective graphs are 
\[
 \vcenter{\hbox{ \begin{tikzpicture}[bullet/.style={circle, fill, inner sep=2pt}]
    \foreach \lab [count=\c, evaluate=\c as \ang using {18+72*\c}] in {1,2,3,4,5} {
       \node[bullet] (\c) at (\ang:10mm) {};
       \node at (\ang:14mm){$\lab$};
    }
    \draw(1)--(2)--(3);
    \draw(4)--(5);
  \end{tikzpicture}}} \hspace*{1cm} \text{ and }\hspace*{1cm}
    \vcenter{\hbox{\begin{tikzpicture}[bullet/.style={circle, fill, inner sep=2pt}]
    \foreach \lab [count=\c, evaluate=\c as \ang using {18+72*\c}] in {1,2,3,4,5} {
       \node[bullet] (\c) at (\ang:10mm) {};
       \node at (\ang:14mm){$\lab$};
    }
    \draw (1)--(5)--(2)--(1);
    \draw (2)--(3)--(4);
  \end{tikzpicture}}}\] 
Their respective partitions are $\pp(a)=\{\{1,2,3\},\{4,5\}\}$ and $\pp(b)=\{\{1,2,3,4,5\}\}$. So $b$ is full whereas $a$ is not.
\end{exa}

\begin{lem}\label{L:DecomposeInFull}
    Let $\pi \in \ppart_d$ be a partition with parts of size $i_1, \ldots,i_k$. Then the semigroup $\pp^{-1}(\pi)$ is isomorphic to the direct product of the following full transposition semigroups:
    \[\pp^{-1}(\pi) \cong \Full_{i_1} \times \cdots \times \Full_{i_k}.\]
    Note that a part of size $1$ gives rise to a trivial factor isomorphic to $\Full_1 = \{ 1 \}$.
\end{lem}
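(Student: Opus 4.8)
The plan is to exhibit the isomorphism $\pp^{-1}(\pi) \cong \Full_{i_1} \times \cdots \times \Full_{i_k}$ directly, by splitting words according to which part of $\pi$ their letters live in. Label the parts of $\pi$ as $P_1, \ldots, P_k$ with $|P_m| = i_m$. An element $a \in \pp^{-1}(\pi)$ is represented by a word $w$ in the letters $e_{(i,j)}$; since $\pp(a) = \pi$, every transposition $(i,j)$ occurring in $w$ has both $i$ and $j$ in the same part $P_m$ (the endpoints of an edge lie in one connected component, which is one part of $\pp(a)$). So $w$ decomposes (after commuting letters past each other, which is legitimate since letters with supports in distinct parts commute by the relation $e_{(i,j)}e_{(k,l)} = e_{(k,l)}e_{(i,j)}$ for disjoint supports) as a product $w_1 w_2 \cdots w_k$ where $w_m$ uses only letters $e_{(i,j)}$ with $i,j \in P_m$. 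Identifying $P_m$ with $\{1, \ldots, i_m\}$ via a fixed bijection $\phi_m$, the subword $w_m$ becomes a word in the generators of $\As^+(\T_{i_m})$, and its partition is the full partition of $\{1,\ldots,i_m\}$ (this is forced: the connected component of $w$ inside $P_m$ is all of $P_m$, since $\pp(a) = \pi$ has $P_m$ as a single part). Thus $w_m$ represents an element of $\Full_{i_m}$.

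The map $a \mapsto (\bar w_1, \ldots, \bar w_k) \in \Full_{i_1} \times \cdots \times \Full_{i_k}$ is the candidate isomorphism. First I would check it is well-defined: if $w$ and $w'$ are two words representing the same element $a$, I must show the corresponding component classes $\bar w_m$ and $\bar w'_m$ agree in $\Full_{i_m}$. This follows because the defining relations of $\As^+(\T_d)$, applied to a word, only ever involve two letters with non-disjoint supports (the relation $e_{(i,j)}e_{(k,j)} = e_{(k,i)}e_{(i,j)}$) or two with disjoint supports (the commutation relation); in the first case both letters lie in the same part (they share the index $j$), so the relation happens entirely within one block $w_m$; in the second case the relation is an instance of a defining relation of each $\As^+(\T_{i_m})$ too (or does nothing within a block). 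Hence any rewriting of $w$ induces, block by block, a rewriting of the $w_m$ by defining relations of $\As^+(\T_{i_m})$, so $\bar w_m$ is independent of the chosen representative. One similarly sees the map is a semigroup homomorphism: concatenating representatives of $a$ and $b$ and regrouping by blocks multiplies the components, using that letters from different blocks commute.

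For injectivity and surjectivity I would argue as follows. Surjectivity is clear: given $(\bar v_1, \ldots, \bar v_k)$ with $v_m$ a full word on $\T_{i_m}$, transport each $v_m$ back along $\phi_m^{-1}$ to a word on $\T_d$ supported in $P_m$, concatenate, and the result lies in $\pp^{-1}(\pi)$ and maps to the given tuple. For injectivity, suppose $a, a' \in \pp^{-1}(\pi)$ have the same image; pick representatives $w = w_1 \cdots w_k$ and $w' = w'_1 \cdots w'_k$ in block form, with $\bar w_m = \bar w'_m$ in $\Full_{i_m}$ for each $m$. Each equality $\bar w_m = \bar w'_m$ means $w_m$ and $w'_m$ are connected by defining relations of $\As^+(\T_{i_m})$; transporting these relations back to $\T_d$ they become defining relations of $\As^+(\T_d)$ supported in $P_m$. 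Performing these rewritings block by block turns $w$ into $w'$, so $a = a'$. The last sentence of the statement, that a singleton part yields the trivial factor $\Full_1 = \{1\}$, is immediate since there are no transpositions supported on a one-element set, so the only word in that block is the empty word.

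The main obstacle is making the ``regroup the letters by block'' step fully rigorous as a statement about the monoid rather than about words: one must be careful that commuting letters with disjoint supports past each other is always a legal move (it is — it is precisely the first family of defining relations, once we know distinct parts have disjoint supports), and that after regrouping, the block subwords are genuinely well-defined up to the relations of the smaller monoids. I expect the cleanest write-up routes everything through the universal property of the monoid presentation: define homomorphisms $\As^+(\T_d) \to \Full_{i_1} \times \cdots \times \Full_{i_k}$ and (on the subsemigroup) back, by specifying them on generators and checking the defining relations are respected, rather than manipulating words ad hoc. That reduces the whole lemma to a finite relation-by-relation check, with the block-disjointness of supports — a consequence of $\pp$ being constant equal to $\pi$ on $\pp^{-1}(\pi)$, which is exactly \cref{P:Lattice} and the preceding lemma — doing the real work.
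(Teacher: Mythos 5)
Your proof is correct and follows essentially the same route as the paper: the paper builds the multiplication map $\Full_{i_1} \times \cdots \times \Full_{i_k} \to \pp^{-1}(\pi)$ (surjective because letters supported in distinct parts commute, injective because every defining relation applicable to a word in $\pp^{-1}(\pi)$ either commutes letters from different parts or is a relation internal to one part), which is exactly the inverse of the block-splitting map you construct, verified by the same relation-by-relation analysis. The only caveat is your closing suggestion to define a homomorphism on all of $\As^+(\T_d)$ with target $\Full_{i_1} \times \cdots \times \Full_{i_k}$: this does not literally work, since generators $e_{(i,j)}$ with $i,j$ in different parts of $\pi$ have no natural image there (the universal property is naturally used in the other direction, as the paper does), but your actual word-level argument does not depend on that remark and is sound.
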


\begin{proof}
    The defining relations of $\As^+(\T_d)$ guarantee that two elements commute if their non-trivial components are mutually disjoint. This yields a surjection $\Full_{i_1} \times \cdots \times \Full_{i_k} \twoheadrightarrow  \pp^{-1}(\pi)$. It is injective since the defining relations of $\As^+(\T_d)$ applicable to a word representing an element from $\pp^{-1}(\pi)$ are either commutation relations for letters coming from different components $\Full_{i_j}$, or commutation or commutation-conjugation relations between letters from the same component of size ${i}$, which are valid in $\Full_{i}$.
\end{proof}

For small values of $d$, we have:
\begin{enumerate}
    \item $\As^+(\T_1) = \Full_1 = \{ 1 \}$, the trivial monoid. Note that $\T_1 = \{ \}$.
    \item $\As^+(\T_2) = \{1\} \sqcup \Full_{2}$, where $\Full_{2} \cong \N$ is generated by $e_{(1,2)}$.
    \item $\As^+(\T_3)$ contains a copy of $\{1\}$; three copies of $\Full_{2}$, generated by the three elements $e_{(i,j)}$, respectively, and one copy of $\Full_{3}$. The product of two elements from different copies of $\Full_{2}$ lands in $\Full_{3}$.
\end{enumerate}

The computation of the growth series of $\As^+(\T_d)$ is thus reduced to that for all full components. The nice thing about the semigroup $\Full_d$ is that it does inject into $\Sym_d \times \N$, as suggested by the computations at the beginning of this section. This opens the way to the techniques used in Section \ref{S:GroupsTransp}.

\begin{thm}\label{T:FullInjects}
    For $d \geq 2$, the assignment
\begin{align*}
    \As^+(\T_d) &\to \Sym_d \times \N,\\
    e_x &\mapsto (x,1)
\end{align*}
uniquely extends to a semigroup morphism. Its restriction to $\Full_d$, denoted by
\[\iota_d \colon \Full_d\to \Sym_d \times \N\]
is injective, and its image is
\[\Im(\iota_d) =  \{\ (g,2(d-1)-l(g)+2k) \in \Sym_d \times \N  : k \in \Z, k \geq 0 \ \}.\]
As usual, $l(g)$ is the minimal number of transpositions from $\T_d$ whose product yields $g$. 
\end{thm}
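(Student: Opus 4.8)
The plan is to establish the three assertions in turn: that the assignment defines a semigroup morphism, that the image of $\Full_d$ is as claimed, and that the restriction $\iota_d$ is injective.

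First I would check that the assignment extends to a semigroup morphism. This is routine: one simply verifies that the images $(x,1)$ of the generators satisfy the defining relations of $\As^+(\T_d)$. For the commutation relation $e_{(i,j)}e_{(k,l)} = e_{(k,l)}e_{(i,j)}$ with all indices distinct, this holds because disjoint transpositions commute in $\Sym_d$ and the $\N$-component simply adds. For the commutation-conjugation relation $e_{(i,j)}e_{(k,j)} = e_{(k,i)}e_{(i,j)}$, observe that in $\Sym_d$ one has $(i,j)(k,j) = (k,i)(i,j)$ (both equal the $3$-cycle sending $i \mapsto j \mapsto k \mapsto i$ up to orientation — in any case this is the standard identity $x \qu y \cdot x = x \cdot y$ rewritten), and again the $\N$-components agree, both being $2$. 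Hence the morphism exists and is unique since the $e_x$ generate.

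Next I would identify the image of $\Full_d$. The key observation is that the $\N$-component of $\iota_d(a)$ records the \emph{word length} of any representative word $w$ of $a$, i.e.\ the number of letters in $w$; call it $\lf(a)$ (this is well-defined because all defining relations are length-preserving). So $\Im(\iota_d) = \{(g, n) : g = \text{underlying permutation of some } a \in \Full_d, \ n = \lf(a)\}$. Now two things must be shown. On one hand, if $a \in \Full_d$ has underlying permutation $g$ and word length $n$, then $n \equiv l(g) \pmod 2$ (since each transposition letter changes the permutation's parity and contributes $1$ to the length) and $n \geq 2(d-1) - l(g)$; the latter is the crucial inequality, and I would prove it by a connectivity/spanning argument: a word $w$ representing a full element has a connected graph $\Gamma_w$ on $d$ vertices, so $w$ uses at least $d-1$ distinct transposition-edges, but moreover one shows (by induction on $d$, or by an edge-counting argument tracking how the permutation is built up) that a connected word of length $n$ can realise a permutation of length at most $n - 2(d-1) + (d-1) = \ldots$; more cleanly, I expect the right bookkeeping is that a word realising $g$ with connected graph on $d$ vertices and using $n$ letters satisfies $n \geq l(g) + 2((d-1) - (\text{number of "new" connecting transpositions used in excess of a spanning tree that actually contribute to } g))$, which after care gives exactly $n \geq 2(d-1) - l(g)$. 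On the other hand, for every $g \in \Sym_d$ and every $k \geq 0$ I must exhibit an element of $\Full_d$ with underlying permutation $g$ and word length $2(d-1) - l(g) + 2k$: start from a reduced word for $g$ of length $l(g)$, then repeatedly append squares $e_x^2$ of well-chosen transpositions to both connect the graph (raising length by $2$ each while fixing the permutation, because $e_x^2$ maps to $(\Id,2)$) and to pad by further $2k$; one needs $(d-1) - l(g) + \ldots$ — here one must be slightly careful when $l(g)$ is already large, using the relations \eqref{E:e2Central}--\eqref{E:e2Equal} to move squares around — but the upshot is that both parities and the lower bound are simultaneously achievable, so the image is exactly the stated set.

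Finally, injectivity of $\iota_d$ on $\Full_d$. Here I would use a counting argument combined with everything above, in the spirit of the group case: I claim that for each target $(g, n)$ in the purported image there is \emph{at most} one element of $\Full_d$ mapping to it. The cleanest route is to produce a canonical/normal form for elements of $\Full_d$ — e.g.\ show every element of $\Full_d$ can be written as $e_x^{2m} \cdot v$ where $v$ is a suitable "reduced full" word and $x$ is a fixed transposition, and that $(g,n)$ determines $m$ and the class of $v$ — or, alternatively, to argue that the semigroup relations are "length-graded and confluent enough" that the pair (permutation, length) is a complete invariant once the element is full. I expect the injectivity step, and specifically the precise form of the lower bound $n \geq 2(d-1) - l(g)$ and the construction realising equality, to be the main obstacle; the morphism check is immediate and the surjectivity onto the stated image is mechanical once the normal form is in hand. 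It is plausible the authors instead deduce injectivity more slickly from the semilattice decomposition (\cref{P:Lattice}, \cref{L:DecomposeInFull}) together with the known group injection $\As(\T_d) \hookrightarrow \Sym_d \times \Z$, by showing the monoid-to-group map is injective on each $\Full_d$ after adjoining length information — so I would keep that as a fallback strategy.
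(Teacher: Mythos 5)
Your morphism check and your description of the image are on the right track (the realisability of every length $2(d-1)-l(g)+2k$ by appending squares is exactly what the paper does), but the lower bound $n \geq 2(d-1)-l(g)$ is only gestured at: your bookkeeping ``$n - 2(d-1) + (d-1) = \ldots$'' trails off and never closes. This part is repairable — either by the paper's route (rewrite any full representative as $e_{(i_1,j_1)}^2\cdots e_{(i_k,j_k)}^2u$ with $u$ a lift of a reduced word for $g$, whose graph has $c(g)$ components, so connectivity forces $k\geq c(g)-1$), or by a direct count: the letters that merge blocks of the edge-partition number exactly $d-1$ and each necessarily merges two cycles of the current permutation, so the remaining $n-(d-1)$ letters must account for a net cycle change of $c(g)-1$, giving $n\geq (d-1)+(c(g)-1)=2(d-1)-l(g)$. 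But as written this step is a sketch, not a proof.

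The genuine gap is injectivity, which is the substance of the theorem and which you explicitly leave open. One must show that two \emph{full} words with the same underlying permutation and the same length are equal in $\As^+(\T_d)$; the paper does this by lifting the classical rewriting theory of $\Sym_d$ to the monoid (pushing central squares to the left instead of cancelling them), reducing both words to the form $e_{(i_1,j_1)}^2\cdots e_{(i_k,j_k)}^2u$ with a \emph{common} reduced suffix $u$, and then normalising the square prefix via the ``contamination'' moves built from \eqref{E:e2Central}--\eqref{E:e2Equal} and the canonical form of \cref{L:HandlingSquares} — and fullness is used essentially at this stage. Your proposal offers no substitute: the hoped-for ``confluent enough'' normal form is exactly what must be constructed. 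Moreover, your fallback — deducing injectivity from the group embedding $\As(\T_d)\hookrightarrow\Sym_d\times\Z$ — is circular: it would require knowing that $\Full_d\to\As(\T_d)$ is injective, which in the paper is a \emph{corollary} of this theorem, not an input; the natural map $\As^+(\T_d)\to\As(\T_d)$ is not injective (the frozen squares $e_x^2$ are pairwise distinct in the monoid but all equal in the group), so fullness must enter through an argument of the contamination type, which your proposal does not supply.
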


In the first non-trivial case $d =3$, the theorem identifies $\Full_3$ with the subsemigroup of $\Sym_3 \underset{\Z_2}{\times} \N$ obtained by discarding the elements $(\Id,2)$ and $((i,j),1)$.

\begin{proof}
  The computation
  \[(x \qu y,1)(x,1) = ((x \qu y) \cdot x,2) = (xy,2) = (x,1) (y,1)\]
  guarantees that we indeed have a well-defined semigroup morphism. 

  Our proof of injectivity is based on the following classical properties of the symmetric group $\Sym_d$ \cite[Proposition 1.6.1]{Bessis_dual_braid_monoid}:
  \begin{enumerate}
      \item\label{i:Sym1} The minimal number $l(g)$ of transpositions $(i,j) \in \T_d$ needed to write a permutation $g \in \Sym_d$ equals $d-c(g)$, where $c(g)$ is the number of cycles in $g$. 
      \item\label{i:Sym2}  Two minimal length representations of a $g \in \Sym_d$ can be related by a finite series of rewriting rules
      \begin{equation}\label{E:RewritingSd}
          (i,j) \cdot (k,l) \ \longleftrightarrow \ \left( (i,j) \qu (k,l)\right) \cdot (i,j)
      \end{equation} (going in any direction).
      \item\label{i:Sym3}   Any representation of a $g \in \Sym_d$ can be turned into one of minimal length using the rewriting rules \eqref{E:RewritingSd} and
      \begin{equation}\label{E:RewritingSd2}
          (i,j)^2 \ \longrightarrow \ 1.
      \end{equation}
  \end{enumerate}
  
  Now, take $a,b \in \Full_d$ represented by words $w$ and $v$ (in the generators $e_{(i,j)}$) respectively, and assume that $\iota_d(a)=\iota_d(b)=(g,m)$. Replacing each $e_{(i,j)}$ with $(i,j)$, one gets two representations of the same permutation $g \in \Sym_d$. Rewrite them to get minimal length representations, as explained in item \ref{i:Sym3} above. These rewriting procedures can be mimicked for the words $w$ and $v$. Indeed, the rewriting steps \eqref{E:RewritingSd} become $e_{(i,j)} e_{(k,l)} \leftrightarrow e_{(i,j) \qu (k,l)} e_{(i,j)}$, while \eqref{E:RewritingSd2} can be replaced with pushing the central element $e_{(i,j)}^2$ to the left of the word. For the two minimal length representations of $g$ obtained this way, mimic the rewriting procedure from item \ref{i:Sym2} in the same manner. This yields two words $w'=e_{(i_1,j_1)}^2 \cdots e_{(i_k,j_k)}^2u$ and $v'=e_{(s_1,t_1)}^2 \cdots e_{(s_k,t_k)}^2u$ in the generators $e_{(i,j)}$ representing $a$ and $b$ respectively. Here the common subword $u$ lifts one of the minimal length representatives of $g$. Note that the two words $w'$ and $v'$ have the same number $k$ of square factors $e_{(\bullet,\bullet)}^2$ since they have the same length $m$. Now, choose arbitrary representatives $r_1, \ldots, r_q$ of the parts of the partition $\pi$ of $u$. \cref{E:e2Central} allows one to move any factor $e_{(i,j)}^2$ in any of the two words towards any letter $e_{(s,t)}$ in $u$; then, with \cref{E:e2Equal}  one can make the transposition $(s,t)$ act on $(i,j)$ by conjugation; finally, the modified square can be returned to its initial position using \eqref{E:e2Central} again. This sequence of operations will be called a \emph{contamination} of $e_{(i,j)}^2$ in this proof. Repeating contamination for all the squares $e_{(i,j)}^2$ in the subwords of $w'$ and $v'$, one can replace $i$ and $j$ with our selected indices $r_1, \ldots, r_q$, whenever $i$ and $j$ come from distinct parts of $\pi$. Then, for the squares $e_{(i,j)}^2$ with $i$ and $j$ from the same part, $i$ can be first contaminated into the index $r_x$ from the same part, then, if there are at least two parts, to another $r_y$ using the remaining squares $e_{(s,t)}^2$ (recall that $a$ and $b$ are full), after which $j$ can be contaminated into $r_x$. Thus all the indices in the squares can be forced to be of the form $r_x$. To relate the words $w''$ and $v''$ obtained this way, one can use
  \begin{lem}\label{L:HandlingSquares}
     Consider a full element $a \in  \Full_d$ which can be represented as a product of squares of generators,  
      $a=e_{(i_1,j_1)}^2 \cdots e_{(i_k,j_k)}^2$. Then it can be rewritten in the following canonical way:
      \[a=e_{(1,2)}^2e_{(2,3)}^2 \cdots e_{(d-2,d-1)}^2e_{(d-1,d)}^{2p},\]
      for $p = k-(d-2)$.
  \end{lem}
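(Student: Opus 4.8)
I plan to recast the statement as a fact about multigraphs and prove it by induction on $d$. To the word $e_{(i_1,j_1)}^2\cdots e_{(i_k,j_k)}^2$ representing $a$, associate the multigraph $E$ on $\{1,\dots,d\}$ with one edge $\{i_l,j_l\}$ per factor. As $a$ is full, $E$ is connected, so $k\geq d-1$ and $p:=k-(d-2)\geq 1$; in particular the word in the statement corresponds to the connected ``broom'' multigraph formed by the path $1-2-\cdots-(d-1)$ together with $p$ parallel edges joining $d-1$ and $d$, which has exactly $k$ edges. The engine of the proof is one rewriting move coming from \eqref{E:e2Central}--\eqref{E:e2Equal}: since $(s,i)\qu(i,j)=(s,j)$, relation \eqref{E:e2Equal} gives $e_{(s,i)}e_{(i,j)}^2=e_{(s,j)}^2e_{(s,i)}$, and combining with the centrality of squares from \eqref{E:e2Central} yields
\[
e_{(s,i)}^2e_{(i,j)}^2 \;=\; e_{(s,j)}^2e_{(s,i)}^2 \qquad (s,i,j \text{ pairwise distinct}).
\]
Graphically: whenever $E$ has two adjacent edges $\{s,i\}$ and $\{i,j\}$, one may \emph{slide} the endpoint $i$ of the second along the first, replacing $\{i,j\}$ by $\{s,j\}$ and keeping $\{s,i\}$; this move and its inverse preserve the number of edges and connectedness, and reordering the commuting square factors is free. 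Every step below is a sequence of such slides.

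The base $d=2$ is immediate: $E$ must be $k$ parallel copies of $\{1,2\}$ and $a=e_{(1,2)}^{2k}$ is already in the claimed form. For the inductive step, I first make vertex $d$ touch a single other vertex: while $d$ is incident to edges $\{d,u\}$ and $\{d,v\}$ with $u\neq v$, slide the endpoint $d$ of $\{d,u\}$ along $\{d,v\}$ to turn it into $\{u,v\}$; this strictly lowers the degree of $d$ while keeping $E$ connected, so it terminates with all edges at $d$ being $m\geq 1$ parallel copies of a single edge $\{w,d\}$, $w\in\{1,\dots,d-1\}$, and the remaining $k-m$ edges forming a connected multigraph $F$ on $\{1,\dots,d-1\}$. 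Hence $a=b\cdot e_{(w,d)}^{2m}$ with $b\in\Full_{d-1}$ full and represented by $k-m$ squares, so by induction $b=e_{(1,2)}^2\cdots e_{(d-3,d-2)}^2e_{(d-2,d-1)}^{2q}$ with $q=(k-m)-(d-3)\geq 1$.

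It remains to move everything into place while the prefix $e_{(1,2)}^2\cdots e_{(d-3,d-2)}^2$ stays untouched. If $w<d-1$, slide each of the $m$ copies of $\{w,d\}$ along the path $w,w+1,\dots,d-1$ present in the canonical multigraph of $b$, turning them all into $\{d-1,d\}$; this gives $a=e_{(1,2)}^2\cdots e_{(d-2,d-1)}^{2q}e_{(d-1,d)}^{2m}$. Finally one transfers multiplicity across the last path edge: the combination ``slide one copy of $\{d-2,d-1\}$ to $\{d-2,d\}$ using a copy of $\{d-1,d\}$ as pivot, then slide $\{d-2,d\}$ to $\{d-1,d\}$ using another copy of $\{d-2,d-1\}$ as pivot'' replaces one $\{d-2,d-1\}$ by one $\{d-1,d\}$, and is available whenever at least two copies of $\{d-2,d-1\}$ remain. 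Doing it $q-1$ times leaves a single $\{d-2,d-1\}$ and $q+m-1$ copies of $\{d-1,d\}$; since $q+m-1=k-(d-2)=p$, this is exactly the asserted canonical word.

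The routine parts are checking that each slide is genuinely a relation of $\As^+(\T_d)$ and that connectedness — equivalently fullness, since equal elements of $\As^+(\T_d)$ have the same partition — is preserved throughout. The part needing the most care, which I would flag as the main obstacle, is the bookkeeping in the inductive step: confirming that the three indices in each slide are pairwise distinct and that the pivot edge survives, treating parallel edges at $d$, and checking that after deleting the $m$ copies of $\{w,d\}$ the graph $F$ is still connected on $\{1,\dots,d-1\}$ so that the induction hypothesis applies. None of this is deep, but it is where a careful write-up must be scrupulous.
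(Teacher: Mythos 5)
Your proof is correct. The engine is the same as the paper's: the relation $e_{(s,i)}^2e_{(i,j)}^2=e_{(s,j)}^2e_{(s,i)}^2$ you derive from \eqref{E:e2Central}--\eqref{E:e2Equal} is exactly the paper's ``contamination'' move specialised to squares acting on squares, and both arguments view the word through its multigraph. Where you differ is the induction scheme: the paper inducts on the number $k$ of squares, splitting into the case where some square can be dropped while keeping fullness (then the induction hypothesis applies to the prefix and the extra square is contaminated into $e_{(d-1,d)}^2$) and the case where the graph is a tree (then vertex $d$ is isolated and the rank drops). You instead induct on $d$ with no case split: first isolate vertex $d$ by sliding its edges onto a single neighbour $w$, apply the rank-$(d-1)$ hypothesis to the rest, route the $m$ parallel copies of $\{w,d\}$ along the canonical path to $\{d-1,d\}$, and finally rebalance multiplicities between $\{d-2,d-1\}$ and $\{d-1,d\}$ with the two-slide trick. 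The paper's version keeps each inductive step minimal (one square at a time) at the price of the tree/non-tree dichotomy; yours avoids the dichotomy but pays with the routing and rebalancing steps and a little more graph bookkeeping --- notably the point you rightly flag, that after isolation the remaining edges form a connected spanning multigraph on $\{1,\dots,d-1\}$ (which holds because all edges at $d$ then go to $w$, so $d$ cannot be a cut vertex), and the count $q+m-1=k-(d-2)=p$, which you verify correctly. Both routes are sound; yours is a legitimate alternative write-up rather than a repair of a gap.
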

  \begin{proof}
      We will use induction on $k$. An element $e_{(i_1,j_1)}^2$ can be full only if $d=2$, in which case the statement is tautological.
      %follows from $\Full_2 \cong \N$. 

      Next, assume the statement established for a certain $k$, and consider a full element \[a=e_{(i_1,j_1)}^2 \cdots e_{(i_k,j_k)}^2e_{(i_{k+1},j_{k+1})}^2.\] 
      
      \textbf{Case 1.} The word remains full after omitting some square, say $e_{(i_{k+1},j_{k+1})}^2$ (commuting squares can always be rearranged in this way). The induction hypothesis then applies to the prefix $e_{(i_1,j_1)}^2 \cdots e_{(i_k,j_k)}^2$, allowing us to rewrite $a$ as 
      \[ a= e_{(1,2)}^2e_{(2,3)}^2 \cdots e_{(d-2,d-1)}^2e_{(d-1,d)}^{2p}e_{(i_{k+1},j_{k+1})}^2.\]
      The last square $e_{(i_{k+1},j_{k+1})}^2$ can then be contaminated into $e_{(d-1,d)}^2$. 
      
      \textbf{Case 2.} If on the contrary the removal of any edge disconnects the graph of $a$, then this graph is necessarily a tree. Pick any square $e_{(c,d)}^2$ where one of the indices is $d$. Using contamination by $e_{(c,d)}$, replace all other occurrences of the index $d$ with $c$. Finally, contaminate $e_{(c,d)}^2$ into $e_{(d-1,d)}^2$ (recall that the graph is connected, since $a$ is full). Then one can rewrite $a=be_{(d-1,d)}^2$, where $b \in \Full_{d-1}$ is a full product of $d-2$ squares on $d-1$ letters. The induction hypothesis allows us to rewrite it as $b=e_{(1,2)}^2e_{(2,3)}^2 \cdots e_{(d-2,d-1)}^2$, and we are done.
  \end{proof}
  The one-partition case is not covered by the above arguments. In this situation, all squares can be contaminated into the same square, say $e_{(1,2)}^2$, and the numbers of such squares in the two words are equal because of the second component of the map $\iota_d$.

  To compute the image of $\iota_d$, we need to find the minimal length $\lf(g)$ of a full representation of a permutation $g \in \Sym_d$ in terms of the transpositions $(i,j) \in \T_d$. Here \emph{full representations} are defined in the same way as the full elements of $\As^+(\T_d)$. Indeed, since in the image $\iota_d(a)=(g,m)$ the permutation $g$ and the integer $m$ are of the same parity, and since $m$ can be increased by $2$ by multiplying $a$ by $e_{(1,2)}^2$, we have 
  \[\Im(\iota_d)=\{\  (g,\lf(g)+2k) \in \Sym_d \times \N  : k \in \Z, k \geq 0 \ \}.\]  
  
  Starting with a representation of $g$ as a product of $l(g)$ transpositions, whose graph has $c(g)$ connected components, one can make it full by adding $c(g)-1$ squares ${(i,j)}^2$. Thus, 
  \[\lf(g) \leq l(g)+2(c(g)-1).\]
  
  On the other hand, any full representation of $g \in \Sym_d$ yields a full element $a \in \Full_d$ with $\iota_d(a) = (g,m)$. As explained before Lemma \ref{L:HandlingSquares}, $a$ can be rewritten as $e_{(i_1,j_1)}^2 \cdots e_{(i_k,j_k)}^2u$, where $u$ lifts a minimal length representation of $g$. Then $u$ is of length $l(g)$, and its graph has $c(g)$ connected components. To connect the graph, one needs at least $c(g)-1$ edges, hence $k \geq c(g)-1$. Thus,
  \[\lf(g) \geq l(g)+2(c(g)-1).\] 
  Hence the announced equality
  \[\lf(g) = l(g)+2(c(g)-1) = l(g)+2(d-l(g)-1) = 2(d-1)-l(g).\qedhere\]
\end{proof}

Note that the proof actually works for any group with a $\CBar$-presentation of class $2$ satisfying the rewriting conditions \ref{i:Sym2}-\ref{i:Sym3}.

The theorem gives for free the following useful property:

\begin{cor}
    The tautological map
    \begin{align*}
        \Full_d &\to \As(\T_d),\\
        e_{a_1}\ldots e_{a_k} &\mapsto e_{a_1}\ldots e_{a_k}
    \end{align*}
    is an injective morphism of semigroups.
\end{cor}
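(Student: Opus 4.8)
The plan is to deduce this corollary directly from \cref{T:FullInjects}. The tautological map in question is the composite of the inclusion $\Full_d \hookrightarrow \As^+(\T_d)$ with the canonical monoid morphism $\As^+(\T_d) \to \As(\T_d)$ sending each $e_x$ to $e_x$; it is certainly a well-defined semigroup homomorphism, so the only thing to check is injectivity. First I would recall from the proof of \cref{thm:relating_minor_and_major_growth_series} (and from \cite{LebedConj}) that $\As(\T_d)$ embeds into $\Sym_d \times \Z$ via $e_x \mapsto (x,1)$. Composing $\Full_d \to \As(\T_d) \hookrightarrow \Sym_d \times \Z$ with this embedding yields, on generators, exactly $e_x \mapsto (x,1)$, which is the same rule defining the semigroup morphism $\iota_d \colon \Full_d \to \Sym_d \times \N \subset \Sym_d \times \Z$ of \cref{T:FullInjects}. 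Hence the composite $\Full_d \to \As(\T_d) \hookrightarrow \Sym_d \times \Z$ equals $\iota_d$ (followed by the inclusion $\N \hookrightarrow \Z$).

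Now the argument is immediate: $\iota_d$ is injective by \cref{T:FullInjects}, and a composite of maps can only be injective if its first factor is. Therefore the tautological map $\Full_d \to \As(\T_d)$ is injective, and being a semigroup homomorphism by construction, it is an injective morphism of semigroups. I would phrase this as: the diagram
\[
\Full_d \longrightarrow \As(\T_d) \hookrightarrow \Sym_d \times \Z
\]
commutes and its total composite is $\iota_d$ (up to the inclusion $\N \subseteq \Z$), which is injective; hence so is the first arrow.

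The only point requiring a little care is confirming that the map $\As^+(\T_d) \to \As(\T_d)$ is genuinely defined — i.e. that every defining relation of the monoid presentation $\As^+(\T_d)$ also holds in the group $\As(\T_d)$ — but this is immediate since $\As(\T_d)$ and $\As^+(\T_d)$ are given by the \emph{same} relations, read as a group presentation versus a monoid presentation respectively, so the identity on generators extends. There is no real obstacle here; the corollary is a formal consequence of \cref{T:FullInjects} together with the group embedding already used in \cref{S:GroupsTransp}. The slightly subtle conceptual point worth stressing in the write-up is that injectivity fails for the full monoid $\As^+(\T_d) \to \As(\T_d)$ — precisely because the frozen squares $e_x^2$ become identified in the group — so it is essential that we restrict to the full components $\Full_d$, where \cref{T:FullInjects} guarantees the second coordinate (the length) already distinguishes the relevant collisions.
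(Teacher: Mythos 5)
Your proposal is correct and follows essentially the same route as the paper: both identify the tautological map, via the embeddings $\iota_d \colon \Full_d \hookrightarrow \Sym_d \times \N$ from \cref{T:FullInjects} and $\As(\T_d) \hookrightarrow \Sym_d \times \Z$ from \eqref{E:AsIntoDirectProduct}, with (a restriction of) the standard inclusion $\Sym_d \times \N \hookrightarrow \Sym_d \times \Z$, and deduce injectivity. Your extra remarks on well-definedness and on why the restriction to $\Full_d$ is essential are accurate but do not change the argument.
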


\begin{proof}
    Realising $\Full_d$ inside $\Sym_d \times \N$ using the map $\iota_d$, and $\As(\T_d)$ inside $\Sym_d \times \Z$ using \eqref{E:AsIntoDirectProduct}, one interprets the map from the statement as the standard inclusion $\Sym_d \times \N \hookrightarrow \Sym_d \times \Z$. 
\end{proof}

We have prepared all ingredients needed to compute the growth series of the structure monoids $\As^+(\T_d)$, which we organise into a single two-variable series. 

\begin{thm}\label{T:GSforStrMonTranspositions}
    The growth series $\bigGplus_d(t)$ of $\As^+(\T_d)$ can be computed by the following formula:
    \begin{equation*}
        \sum_{d \geq 0}\frac{1}{d!}\bigGplus_d(t)x^d \ = \ \exp\left(\frac{(1-tx)^{-t}-1-t^4x}{t^2(1-t^2)}\right).
    \end{equation*}
\end{thm}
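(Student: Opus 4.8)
The strategy is to build the growth series of $\As^+(\T_d)$ out of the growth series of its full components, exploiting the semilattice decomposition from \cref{P:Lattice}, and then to package the single-$d$ generating functions into one exponential generating function in the variable $x$. First, I would use \cref{L:DecomposeInFull} together with \cref{P:Lattice}: since $\As^+(\T_d) = \bigsqcup_{\pi \in \ppart_d} \pp^{-1}(\pi)$ and each $\pp^{-1}(\pi) \cong \Full_{i_1} \times \cdots \times \Full_{i_k}$ for $\pi$ having blocks of sizes $i_1, \ldots, i_k$, the length of an element of $\As^+(\T_d)$ is additive across the direct factors (a minimal word is a concatenation of minimal words in each block, by the disjointness of the supports). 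Hence $\GS_{\pp^{-1}(\pi)}(t) = \prod_{j=1}^{k} \phi_{i_j}(t)$, where $\phi_m(t) := \GS_{\Full_m}(t)$ is the growth series of the full transposition semigroup, with the convention $\phi_1(t) = 1$. Summing over all partitions of $\{1, \dots, d\}$ gives
\[
\bigGplus_d(t) = \sum_{\pi \in \ppart_d} \prod_{\text{blocks } B \text{ of } \pi} \phi_{|B|}(t),
\]
which is exactly the shape to which the exponential formula for exponential generating functions applies: if $\Phi(t,x) = \sum_{m \geq 1} \phi_m(t)\frac{x^m}{m!}$, then $\sum_{d \geq 0} \bigGplus_d(t)\frac{x^d}{d!} = \exp(\Phi(t,x))$. (A small care point: one must decide whether the empty partition of the empty set contributes $1$ to $\bigGplus_0$; it does, matching $\As^+(\T_0) = \{1\}$, which is consistent with the $d!=0!$ term on the left.)

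Next I would compute $\phi_m(t)$ for $m \geq 2$ using \cref{T:FullInjects}. That theorem identifies $\Full_m$ with $\{(g, 2(m-1) - l(g) + 2k) : g \in \Sym_m,\ k \geq 0\} \subseteq \Sym_m \times \N$, and the length of such an element with respect to the generators $e_x$ is its second coordinate (as in the group case: you need at least that many generators for the second factor, and exactly that many suffice since $e_{(1,2)}^2$ only bumps the second coordinate). Therefore
\[
\phi_m(t) = \sum_{g \in \Sym_m} \sum_{k \geq 0} t^{2(m-1) - l(g) + 2k} = \frac{t^{2(m-1)}}{1-t^2} \sum_{g \in \Sym_m} t^{-l(g)} = \frac{t^{2(m-1)}}{1-t^2}\, \smallG_m(t^{-1}),
\]
where $\smallG_m(t) = \prod_{k=1}^{m-1}(1+kt)$ by \eqref{E:Solomon}. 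Since $l(g) = m - c(g)$ and the number of permutations of $m$ with $c$ cycles is the unsigned Stirling number $\genfrac[]{0pt}{}{m}{c}$, one gets $\smallG_m(t^{-1}) = \sum_{c} \genfrac[]{0pt}{}{m}{c} t^{-(m-c)} = t^{-(m-1)}\prod_{k=1}^{m-1}(k+t)$ after clearing denominators, so that
\[
\phi_m(t) = \frac{t^{m-1}}{1-t^2}\prod_{k=1}^{m-1}(k+t) = \frac{1}{1-t^2}\cdot t^{m-1}\cdot\frac{\Gamma(m+t)}{\Gamma(1+t)}.
\]

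The final step is the generating-function identity. With $\phi_1 = 1$ handled separately, write
\[
\Phi(t,x) = 1\cdot\frac{x}{1!} + \sum_{m \geq 2} \frac{t^{m-1}}{1-t^2}\cdot\frac{\Gamma(m+t)}{\Gamma(1+t)}\cdot\frac{x^m}{m!}.
\]
The sum $\sum_{m \geq 1}\frac{\Gamma(m+t)}{\Gamma(1+t)\,\Gamma(m+1)}(tx)^m$ is a hypergeometric series: by the binomial theorem $\sum_{m \geq 0}\binom{m+t-1}{m}z^m = (1-z)^{-t}$, so $\sum_{m \geq 1}\frac{\Gamma(m+t)}{\Gamma(t)\,m!}z^m = (1-z)^{-t}-1$; dividing by $t$ (using $\Gamma(1+t) = t\Gamma(t)$) and substituting $z = tx$ gives $\sum_{m \geq 1}\frac{\Gamma(m+t)}{\Gamma(1+t)\,m!}(tx)^m = \frac{(1-tx)^{-t}-1}{t}$. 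Now I reconcile this with $\Phi$: the $m \geq 2$ part of $\Phi$ equals $\frac{1}{t(1-t^2)}\sum_{m \geq 2}\frac{\Gamma(m+t)}{\Gamma(1+t)\,m!}(tx)^m = \frac{1}{t(1-t^2)}\left(\frac{(1-tx)^{-t}-1}{t} - t\cdot tx\right) = \frac{(1-tx)^{-t}-1-t^2\cdot tx}{t^2(1-t^2)}$; wait — the $m=1$ term of the hypergeometric sum is $\frac{\Gamma(1+t)}{\Gamma(1+t)\,1!}(tx) = tx$, so subtracting it contributes $-\frac{tx}{t(1-t^2)} = -\frac{x}{1-t^2}$ from the $m\geq2$ block, while the separate $\phi_1$ term adds $+x$; these do not cancel, so the correct bookkeeping must instead \emph{replace} the true $m=1$ coefficient of the hypergeometric expansion (which would be $\frac{1}{1-t^2}\cdot t^{0}\cdot 1 \cdot x = \frac{x}{1-t^2}$ if the formula $\phi_m = \frac{t^{m-1}}{1-t^2}\prod_{k=1}^{m-1}(k+t)$ were applied at $m=1$) by the true value $\phi_1 x = x$. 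The discrepancy is $\frac{x}{1-t^2} - x = \frac{t^2 x}{1-t^2}$, which must be subtracted; carrying the $t$ from $z=tx$ through, this accounts precisely for the $-t^4 x$ correction inside the exponent. Assembling: $\Phi(t,x) = \frac{(1-tx)^{-t}-1-t^4x}{t^2(1-t^2)}$, and exponentiating yields the claim.

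The main obstacle I anticipate is precisely this last bookkeeping: the clean product formula $\phi_m(t) = \frac{t^{m-1}}{1-t^2}\prod_{k=1}^{m-1}(k+t)$ is only valid for $m \geq 2$ (it would give $\frac{x}{1-t^2}$ rather than $x$ at $m=1$), so one cannot simply sum the closed form from $m=1$; the $-t^4x$ term in the numerator is exactly the correction needed to downgrade the spurious $m=1$ contribution $\frac{t^2 x}{1-t^2}$ inside the exponent to the honest value. I would therefore be careful to: (i) establish additivity of length across the direct-product decomposition of $\pp^{-1}(\pi)$ rigorously; (ii) justify the exponential formula for these exponential generating functions (standard, but worth a sentence); and (iii) track the powers of $t$ introduced by the substitution $z = tx$ and the factor $\frac{t^{m-1}}{1-t^2}$ so the single anomalous low-degree term lands as $-t^4 x$ exactly. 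Everything else is a routine manipulation of the binomial series $(1-tx)^{-t}$.
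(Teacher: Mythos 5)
Your proposal is correct and follows essentially the same route as the paper: the semilattice decomposition into products of full transposition semigroups (\cref{P:Lattice}, \cref{L:DecomposeInFull}), the computation of $\GS_{\Full_m}(t)=\frac{t^{2(m-1)}}{1-t^2}\smallG_m(t^{-1})$ via \cref{T:FullInjects} and \eqref{E:Solomon}, the exponential formula, and the separate treatment of the $m=1$ blocks whose discrepancy produces exactly the $-t^4x$ term. Your final bookkeeping (after the mid-stream self-correction of the $m=1$ subtraction) lands on the same exponent $\frac{(1-tx)^{-t}-1-t^4x}{t^2(1-t^2)}$ as the paper, which handles the same correction via the factor $(1-t^2)^{\mu_1(\lambda)}$.
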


Here we use the {exponential} $\exp$ defined for formal series $f = \sum_{k,l=0}^{\infty} r_{k,l} t^k x^l \in \Q[[t,x]]$ with $r_{0,0} = 0$ by
\[
\exp(f) = \sum_{n=0}^{\infty} \frac{f^n}{n!}.
\]
Note that, as usual, $\exp(f+g) = \exp(f)\exp(g)$.

Also, the expression $(1-tx)^{-t}$ should be understood as the formal binomial series 
\[\begin{aligned}
(1+y)^{\alpha }&=\sum _{n=0}^{\infty }\!{\binom {\alpha }{n}}y^{n}
=1+\alpha y+{\frac {\alpha (\alpha -1)}{2!}}y^{2}+{\frac {\alpha (\alpha -1)(\alpha -2)}{3!}}y^{3}+\cdots. \end{aligned}\]
with $\alpha = -t$ and $y = -tx$.

\begin{proof}
Let us first compute the restricted growth series $\bigGfull_d(t) = \sum_{g \in \Full_d}t^{l(g)}$ for $d \geq 2$. Theorem \ref{T:FullInjects} identifies the elements of $\Full_d$ with the couples $(g,2(d-1)-l(g)+2k) \in \Sym_d \times \N$, with $k \in \Z$, $k \geq 0$. The argument at the end of the proof of that theorem guarantees that the length of such an element, in terms of the generators $e_{(i,j)}$, is simply the second component $2(d-1)-l(g)+2k$. Thus
\begin{align*}
    \bigGfull_d(t) &= \sum_{g \in \Sym_d} \sum_{k \geq 0} t^{2(d-1)-l(g)+2k} = \frac{t^{2(d-1)}}{1-t^2}  \sum_{g \in \Sym_d}t^{-l(g)} \\
    &= \frac{t^{2(d-1)}}{1-t^2}\smallG_d(t^{-1}) =\frac{t^{2(d-1)}}{1-t^2}    
    \prod_{k=1}^{d-1} (1 + kt^{-1}) \\
    &=\frac{t^{d-2}}{1-t^2}    
    \prod_{k=0}^{d-1} (t + k) = \frac{(-t)^{d}}{t^2(1-t^2)} 
    \prod_{k=0}^{d-1} (-t - k) \\
    &=d!\frac{(-t)^{d}}{t^2(1-t^2)}  \binom {-t}{d}.
\end{align*}
We used the formula \eqref{E:Solomon} for the growth series $\smallG_d =\GS_{\Sym_d, \T_d}$. For $d=1$, the above formula fails. For future computations, we need its corrected version:
\begin{align*}
    \bigGfull_1(t) &\ =\ 1 \ = \ 1!\frac{(-t)^{1}}{t^2(1-t^2)}  \binom {-t}{1} \cdot (1-t^2).
\end{align*}
So, the correction factor is $(1-t^2)$.

To compute the growth series $\bigGplus_d(t)$ of the entire monoid $\As^+(\T_d)$, according to Proposition \ref{P:Lattice}, one needs to sum over all partitions $\pi \in \ppart_d$ of the set $\{1,2,\ldots,d\}$ the restricted growth series of the subsemigroups $\pp^{-1}(\pi)$. According to Lemma \ref{L:DecomposeInFull}, for a partition with parts of size $i_1, \ldots,i_k$, the latter restricted growth series is the product of the restricted growth series of the corresponding full transposition semigroups:
\[\GS_{\pp^{-1}(\pi)}(t) = \sum_{g \in \pp^{-1}(\pi)}t^{l(g)} = \bigGfull_{i_1}(t) \cdots \bigGfull_{i_k}(t).\] 
In particular, it depends only on the part sizes $i_1, \ldots,i_k$ of $\pi$. Now, take a partition 
\[\lambda = (n_1 \leq n_2 \leq \ldots \leq n_k) \vdash d\]
of an integer $d$. Denote by $\mu_i(\lambda)$ the number of its parts of size $i$. One has $\sum_i\mu_i(\lambda) = k$, and $\sum_i i\mu_i(\lambda) = \sum_j n_j = d$. In our computations, we will need the number of set partitions $\pi \in \ppart_d$ whose part sizes form the integer partition $\lambda$. It is given by the formula
\[\binom{d}{n_1,\ldots,n_k}\prod_i\frac{1}{\mu_i(\lambda)!}.\]
The preceding argument then yields:
\begin{align*}
\bigGplus_d(t) &= \sum_{\lambda= (n_1, \ldots, n_k) \vdash d}  \binom{d}{n_1,\ldots,n_k}\prod_i\frac{1}{\mu_i(\lambda)!} \prod_i  \left(\bigGfull_{i}(t)\right)^{\mu_i(\lambda)}\\
&= \sum_{\lambda= (n_1, \ldots, n_k) \vdash d}  \frac{d!}{\prod_i (i!)^{\mu_i(\lambda)}}\prod_i\frac{1}{\mu_i(\lambda)!} \left(i!\frac{(-t)^{i}}{t^2(1-t^2)}  \binom {-t}{i}\right)^{\mu_i(\lambda)} \cdot (1-t^2)^{\mu_1(\lambda)}
\\
&= d!\sum_{\lambda= (n_1, \ldots, n_k) \vdash d}  \prod_i\frac{1}{\mu_i(\lambda)!} \left(\frac{(-t)^{i}}{t^2(1-t^2)}  \binom {-t}{i}\right)^{\mu_i(\lambda)} \cdot (1-t^2)^{\mu_1(\lambda)}.
\end{align*}
The result is the factorial $d!$ times a sum of products of terms, each of which depends only on the integers $i$ and $\mu_i(\lambda)$, in the exponential-looking way. $\bigGplus_d(t)$ is expressed as a sum over partitions of $d$ in a way that is similar to the coefficients in certain exponential generating series. A comparison with the exponential formula from enumerative combinatorics \cite[Theorem 3.11]{generatingfunctionology} suggests using the exponential generating series for the sequence of formal series $\bigGplus_d$:
\begin{align*}
    \sum_{d \geq 0}\frac{1}{d!}\bigGplus_d(t)x^d &= \sum_{d \geq 0}\sum_{\lambda= (n_1, \ldots, n_k) \vdash d}  \prod_i\frac{1}{\mu_i(\lambda)!} \left(\frac{(-t)^{i}}{t^2(1-t^2)}  \binom {-t}{i}\right)^{\mu_i(\lambda)} \cdot (1-t^2)^{\mu_1(\lambda)} \cdot x^{\sum_i i \mu_i(\lambda)}\\
    &= \sum_{d \geq 0}\sum_{\lambda= (n_1, \ldots, n_k) \vdash d}  \prod_i\frac{1}{\mu_i(\lambda)!} \left(\frac{(-tx)^{i}}{t^2(1-t^2)}  \binom {-t}{i}\right)^{\mu_i(\lambda)} \cdot (1-t^2)^{\mu_1(\lambda)} 
    \\
    &= \sum_{m_1,m_2,\ldots \in \N_0}  \prod_i\frac{1}{m_i!} \left(\frac{(-tx)^{i}}{t^2(1-t^2)}  \binom {-t}{i}\right)^{m_i} \cdot (1-t^2)^{m_1}
    \\
    &= \prod_{i \geq 2} \sum_{m_i\in \N_0} \frac{1}{m_i!} \left(\frac{(-tx)^{i}}{t^2(1-t^2)}  \binom {-t}{i}\right)^{m_i} \cdot \sum_{m_1 \in \N_0} \frac{1}{m_1!} x^{m_1}\\
    &= \prod_{i \geq 2} \exp \left(\frac{(-tx)^{i}}{t^2(1-t^2)}  \binom {-t}{i}\right) \cdot \exp(x)\\
    &= \exp \left(x+\sum_{i \geq 2}\frac{(-tx)^{i}}{t^2(1-t^2)}  \binom {-t}{i}\right)\\
    &= \exp \left(x+\frac{1}{t^2(1-t^2)}\left(\sum_{i \geq 0}(-tx)^{i} \binom {-t}{i} -1 - t^2x\right)\right)\\
    &= \exp \left(\frac{1}{t^2(1-t^2)}\left((1-tx)^{-t} -1 - t^4x\right)\right),
\end{align*}
as announced.
\end{proof}

Let us resume our computations for small values of $d$. For the full part, the formula from the proof yields
\[\bigGfull_2(t) = 2!\frac{(-t)^{2}}{t^2(1-t^2)} \frac{(-t)(-t-1)}{2!} = \frac{t}{1-t},\]
hence
\[\bigGplus_2 (t) = 1 + \bigGfull_2(t)= \frac{1}{1-t} = \GS_{\N_0}(t),\]
as expected. Next,
\[\bigGfull_3(t) = 3!\frac{(-t)^{3}}{t^2(1-t^2)} \frac{(-t)(-t-1)(-t-2)}{3!} = \frac{t^2(t+2)}{1-t} = 2t^2+3 \sum_{n \geq 3} t^n.\]
Note that, due to the semigroup injection from Theorem \ref{T:FullInjects}, we always have 
\[\bigGfull_d(t) = \frac{d!}{2} \cdot \frac{1}{1-t} + \text{ a polynomial of degree $(2d-4)$ }.\]
Next, from our structural analysis of $\As^+(\T_3)$ it follows that
\begin{align*}
    \bigGplus_3 (t) & = 1 + 3\bigGfull_2(t)+\bigGfull_3(t) = 1 + 3\frac{t}{1-t}+\frac{t^2(t+2)}{1-t} = \frac{(t+1)(t^2+t+1)}{1-t}\\
    & =1 + 3t + 5t^2+6 \sum_{n \geq 3} t^n.
\end{align*}
Similarly,
\[\bigGfull_4(t) = \frac{t^3(t+2)(t+3)}{1-t} = 6t^3+11t^4+12 \sum_{n \geq 5} t^n,\]
and 
\begin{align*}
    \bigGplus_4 (t) & = 1 + 6\bigGfull_2(t) + 4\bigGfull_3(t) + \bigGfull_4(t) + 3\bigGfull_2(t)^2 \\
    &= 1 + 6\frac{t}{1-t} + 4\frac{t^2(t+2)}{1-t} + \frac{t^3(t+2)(t+3)}{1-t} + 3 \left( \frac{t}{1-t} \right)^2.
\end{align*}
Observe that, because of the last summand, this series does not longer stabilise for the degree $n$ large enough.

\section{Normal forms in the structure monoid of the infinite reflection solution} \label{S:MonoidsInfiniteReflection} 

In this section, we prepare the soil for investigating the growth of the structure monoids of reflection solutions $(\R_d, r_{\qu})$. An important ingredient for this will be a normal form in these monoids. This form will be convenient to establish first in a common lift of these solutions (for all $d$), which we will now describe.

\begin{defn}
    The \emph{infinite reflection solution}, denoted by $\R_{\infty}$, is the set $\Z$ endowed with the following map on $\Z \times \Z$:
    \begin{equation}\label{E:ReflectionSol}
     r(x,y) = (x \qu y = -y+2x,x).
    \end{equation}
\end{defn}
One easily checks that it is an invertible non-degenerate YBE solution. Readers familiar with quandles will readily recognise the infinite dihedral quandle.

Taking a quotient modulo $d$, one obtains a solution isomorphic to $(\R_d, r_{\qu})$. Indeed, one just needs to order cyclically the vertices of a regular $d$-gon, and identify the reflection with respect to the vertex $i$ with $i \in \Z_d$.

To compare words in $\As^+(\R_{\infty})$, we will need several natural invariants.

\begin{lem}
    The assignment
    \begin{align*}
        l_0 \times l_1 \colon \As^+(\R_{\infty}) &\to \N_0 \times \N_0, \\
e_{a}  &\mapsto \begin{cases} (1,0) & \text{ if } a \text{ is even},\\
    (0,1) & \text{ if } a \text{ is odd}
\end{cases}
    \end{align*}
    uniquely extends to a well-defined monoid morphism.
\end{lem}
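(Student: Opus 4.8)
The plan is to invoke the universal property of the monoid presentation defining $\As^+(\R_{\infty})$: to produce a monoid morphism out of $\As^+(\R_{\infty})$ it suffices to define a map on the free monoid generated by the symbols $e_x$, $x \in \Z$, and to check that it is compatible with the defining relations. Concretely, I would let $\phi$ be the unique monoid homomorphism from the free monoid on $\{e_x : x \in \Z\}$ to $\N_0 \times \N_0$ determined by $\phi(e_x) = (1,0)$ when $x$ is even and $\phi(e_x) = (0,1)$ when $x$ is odd. On a word $w$ in the generators, $\phi(w)$ is then $(l_0(w), l_1(w))$, where $l_0(w)$ (resp.\ $l_1(w)$) counts the occurrences of letters $e_a$ with $a$ even (resp.\ odd).

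The only thing to verify is that $\phi$ takes the same value on the two sides of every defining relation. Recall that, applying the recipe \eqref{E:StrGroup} to the solution \eqref{E:ReflectionSol}, the relations are $e_x e_y = e_{x \qu y}\, e_x$ with $x \qu y = 2x - y$. On the left side $\phi(e_x e_y) = \phi(e_x) + \phi(e_y)$, and on the right side $\phi(e_{x \qu y}\, e_x) = \phi(e_{2x-y}) + \phi(e_x)$, so the desired equality reduces to $\phi(e_{2x-y}) = \phi(e_y)$. This holds because $2x - y \equiv -y \equiv y \pmod 2$, so $2x-y$ and $y$ have the same parity and hence the same image under $\phi$. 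Therefore $\phi$ factors through $\As^+(\R_{\infty})$, giving the claimed morphism $l_0 \times l_1$.

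I do not expect any genuine obstacle here: the single observation that matters is that the conjugation $y \mapsto x \qu y = 2x - y$ preserves the parity of $y$. One may add the complementary remark that every defining relation has equal length on both sides, so that the total length $l_0 + l_1$ is automatically a well-defined monoid morphism $\As^+(\R_{\infty}) \to \N_0$; the content of the lemma is the finer statement that $l_0$ and $l_1$ are \emph{separately} well-defined, which is exactly the parity computation above.
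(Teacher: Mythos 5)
Your argument is correct and is exactly the paper's proof: the map is defined on the free monoid and factors through the presentation because $x \qu y = 2x - y$ has the same parity as $y$, so both sides of each defining relation have the same image. The paper states this parity observation in one line; your write-up merely spells out the universal-property bookkeeping.
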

The components $l_0$ and $l_1$ simply count the generators with even and odd indices respectively. More conceptually, $l_0 \times l_1$ is the monoid surjection $\As^+(\R_{\infty}) \twoheadrightarrow \As^+(\R_{2}) \cong \N_0 \times \N_0$ induced by the above mentioned solution surjection $\R_{\infty} \twoheadrightarrow \R_2$.

\begin{proof}
The compatibility with the defining relations in $\As^+(\R_{\infty})$ follows from the fact that $y$ and $-y+2x$ are always of the same parity.
\end{proof}

\begin{defn}
    In what follows, an application of a defining relation in $\As^+(\R_{\infty})$ or its inverse in a rewriting sequence will be called a \emph{braiding move}.
\end{defn}

\begin{defn}
 The numbers $l_0(w)$ and $l_1(w)$ are called the \emph{even} and the \emph{odd length} of $w \in \As^+(\R_{\infty})$ respectively.   
\end{defn}
Their sum is the familiar length $l(w)$ with respect to the generators $e_a$. 

\begin{lem}
    The map
    \begin{align*}
        \w \colon \As^+(\R_{\infty}) &\to \Z, \\
e_{a_1} e_{a_2} \cdots e_{a_n} &\mapsto a_1 - a_2 + \cdots +(-1)^{n+1} a_n,\\
1 &\mapsto 0
    \end{align*}
    is well defined.
\end{lem}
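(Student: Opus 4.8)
The plan is to verify that the assignment $\w$ is compatible with both families of defining relations of $\As^+(\R_{\infty})$, since a map on generators extending multiplicatively to words descends to the monoid precisely when it respects the relations. Recall that the defining relations come from $r(x,y) = (-y+2x,x)$, so they take the form
\[
e_x e_y = e_{-y+2x}e_x, \qquad x,y \in \Z.
\]
First I would treat a general relation of this type applied in the interior of a word: if $w = e_{a_1}\cdots e_{a_{i-1}}\,e_x e_y\, e_{a_{i+2}}\cdots e_{a_n}$ is rewritten as $w' = e_{a_1}\cdots e_{a_{i-1}}\,e_{-y+2x}e_x\, e_{a_{i+2}}\cdots e_{a_n}$, then the prefix $e_{a_1}\cdots e_{a_{i-1}}$ contributes the same alternating sum to $\w(w)$ and $\w(w')$, and the suffix $e_{a_{i+2}}\cdots e_{a_n}$ also contributes identically since the rewriting does not change the total number of letters (so the signs attached to the suffix letters are unchanged). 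Hence it suffices to check that the two-letter blocks agree up to the common sign $\varepsilon = (-1)^{i+1}$ carried in from the prefix: one needs
\[
\varepsilon\bigl(x - y\bigr) = \varepsilon\bigl((-y+2x) - x\bigr),
\]
which is immediate because $x - y = -y + x = (-y+2x) - x$.

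The only subtlety is that the definition of $\w$ assigns signs to letters according to their absolute position in the word, so one must make sure the rewriting above really does preserve positions and word length; it does, since each relation $e_x e_y = e_{-y+2x}e_x$ has two letters on each side. I would also note explicitly that $\w$ is consistent on the empty word ($\w(1) = 0$) and is additive in the appropriate twisted sense under concatenation — namely $\w(uv) = \w(u) + (-1)^{l(u)}\w(v)$ — which is exactly the bookkeeping used above when splitting a word into prefix, active block, and suffix. With these observations, the compatibility check reduces to the single arithmetic identity $x - y = (-y+2x) - x$, and well-definedness follows.

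I do not expect any real obstacle here: the statement is a routine verification, and the main point to get right is the sign bookkeeping, i.e. confirming that applying a defining relation (which swaps a two-letter block $e_x e_y \leftrightarrow e_{-y+2x}e_x$) leaves every other letter in the same position and hence with the same sign in the alternating sum. Once that is observed, the computation $x-y = (-y+2x)-x$ closes the argument. (One could alternatively phrase $\w$ as first applying $l_0 \times l_1$-type parity data together with a signed sum, but the direct check above is the cleanest route.)
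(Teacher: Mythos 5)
Your argument is correct and follows the same route as the paper: the paper's proof consists precisely of checking compatibility with the braiding moves via the identity $x-y = (-y+2x)-x$. Your additional remarks on sign bookkeeping and the twisted additivity $\w(uv) = \w(u) + (-1)^{l(u)}\w(v)$ simply make explicit what the paper leaves implicit.
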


\begin{proof}
This definition is compatible with the braiding moves, since
    \[x-y = (-y+2x)-x. \qedhere\] 
\end{proof}

\begin{defn}
 The number $\w(w)$ is called the \emph{weight} of $w \in \As^+(\R_{\infty})$.   
\end{defn}

This weight function is extended to general Alexander quandles in \cite{Adrien}. Note that $\w$ is not a morphism of monoids, but rather a monoid $1$-cocycle. The weight function is useful for computations in $\As^+(\R_{\infty})$:

\begin{lem}
    For any $w \in \As^+(\R_{\infty})$ and $a \in \Z$, we have
    \begin{equation}\label{E:QuasiCommInAsR}
        we_a = e_{(-1)^{l(w)}a + 2 \w(w)}w.
    \end{equation}
\end{lem}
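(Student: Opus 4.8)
The statement to prove is the quasi-commutation relation $we_a = e_{(-1)^{l(w)}a + 2\w(w)}w$ in $\As^+(\R_{\infty})$. The natural approach is induction on the length $l(w)$, using the fact that the formula involves only two pieces of data about $w$ that behave predictably under concatenation on the left: its length $l(w)$ (which is a monoid morphism to $\N_0$) and its weight $\w(w)$ (which is the monoid $1$-cocycle defined just above).

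First I would check the base case $l(w) = 0$, i.e. $w = 1$: then $(-1)^0 a + 2\w(1) = a + 0 = a$, and $1 \cdot e_a = e_a \cdot 1$, so the identity holds trivially. For the inductive step, write $w = e_b w'$ for some generator $e_b$ and some $w'$ with $l(w') = l(w) - 1$. By the induction hypothesis, $w' e_a = e_{(-1)^{l(w')}a + 2\w(w')} w'$. Set $a' = (-1)^{l(w')}a + 2\w(w')$. Then
\[
w e_a = e_b w' e_a = e_b e_{a'} w'.
\]
Now I apply the single defining relation of $\As^+(\R_{\infty})$ to the length-two prefix $e_b e_{a'}$: since $r(b, a') = (b \qu a', b) = (-a' + 2b, b)$, we have $e_b e_{a'} = e_{-a' + 2b} e_b$, whence
\[
w e_a = e_{-a' + 2b} e_b w' = e_{-a' + 2b}\, w.
\]
It remains to verify that the index $-a' + 2b$ equals $(-1)^{l(w)} a + 2\w(w)$. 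Substituting $a' = (-1)^{l(w')}a + 2\w(w')$ and using $l(w) = l(w') + 1$, we get $-a' + 2b = -(-1)^{l(w')}a - 2\w(w') + 2b = (-1)^{l(w)}a + 2(b - \w(w'))$. Finally, from the definition of the weight as an alternating sum, $\w(w) = \w(e_b w') = b - \w(w')$ (prepending the letter $e_b$ to $w'$ negates all previous signs and adds $b$ with sign $+1$), so $2(b - \w(w')) = 2\w(w)$, and the two sides match.

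The only genuine subtlety — and the one step I would state carefully rather than gloss over — is the well-definedness of the whole assertion: both sides of \eqref{E:QuasiCommInAsR} are elements of the monoid $\As^+(\R_{\infty})$, and the computation above manipulates specific \emph{words}, so one must know that $l$ and $\w$ are genuinely functions on monoid elements (not just on words). This is exactly what the two preceding lemmas (that $l_0 \times l_1$ extends to a monoid morphism, hence $l = l_0 + l_1$ does, and that $\w$ is well defined on $\As^+(\R_{\infty})$) supply, so there is no real obstacle here — the induction closes cleanly once those facts are invoked. The cocycle identity $\w(e_b w') = b - \w(w')$ is immediate from the alternating-sum formula and is the one bookkeeping point worth spelling out.
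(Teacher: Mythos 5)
Your proof is correct and follows exactly the route the paper indicates, namely induction on the length $l(w)$ (the paper simply states ``an easy induction on the length'' and leaves the details implicit). Your verification of the index via the defining relation $e_b e_{a'} = e_{-a'+2b} e_b$ and the cocycle identity $\w(e_b w') = b - \w(w')$, together with the remark that $l$ and $\w$ are well defined on monoid elements by the preceding lemmas, fills in those details correctly.
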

\begin{proof}
    An easy induction on the length $l(w)$ of $w$.
\end{proof}

A direct consequence of the above formula is the centrality of squares in $\As^+(\R_{\infty})$: 
\begin{lem}
    For any $a,b \in \Z$, we have
\begin{equation}\label{eq:squares_are_central_over_takz}
        e_a^2e_b = e_be_a^2.
    \end{equation}
\end{lem}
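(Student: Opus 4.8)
The final statement to prove is \eqref{eq:squares_are_central_over_takz}: for any $a, b \in \Z$, we have $e_a^2 e_b = e_b e_a^2$ in $\As^+(\R_\infty)$.

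The plan is to apply the quasi-commutation formula \eqref{E:QuasiCommInAsR} twice, with $w = e_a^2$. First I would compute the relevant invariants of $w = e_a^2$: its length is $l(e_a^2) = 2$, which is even, so $(-1)^{l(w)} = 1$; and its weight is $\w(e_a^2) = a - a = 0$. Plugging these into \eqref{E:QuasiCommInAsR} gives $e_a^2 e_b = e_{(-1)^2 b + 2\cdot 0}\, e_a^2 = e_b e_a^2$, which is exactly the claim.

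So the proof is essentially a one-line substitution into the already-established formula \eqref{E:QuasiCommInAsR}. There is no real obstacle here; the only thing to double-check is that the hypotheses of the preceding lemma are met (they are: $w = e_a^2$ is an honest element of $\As^+(\R_\infty)$, and $b \in \Z$ is arbitrary), and that the two numeric invariants $l$ and $\w$ of $e_a^2$ are computed correctly. Both are immediate: $l$ is additive in the generators so $l(e_a^2) = 2$, and $\w(e_a e_a) = a - a = 0$ directly from the definition of the weight.

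Here is the proof.

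\begin{proof}
Apply \cref{E:QuasiCommInAsR} with $w = e_a^2$. Since $l(e_a^2) = 2$ is even, we have $(-1)^{l(w)} = 1$; and by the definition of the weight, $\w(e_a^2) = a - a = 0$. Hence
\[
e_a^2 e_b = e_{(-1)^{l(e_a^2)}b + 2\w(e_a^2)}\, e_a^2 = e_{b}\, e_a^2. \qedhere
\]
\end{proof}
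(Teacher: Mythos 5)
Your proof is correct and follows exactly the paper's route: the paper presents this lemma as a direct consequence of the quasi-commutation formula \eqref{E:QuasiCommInAsR}, which is precisely the substitution $w=e_a^2$, $l(e_a^2)=2$, $\w(e_a^2)=0$ that you carry out. Nothing is missing.
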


\begin{lem}
    The map
    \begin{align*}
        \deg \colon \As^+_{\geq 1}(\R_{\infty}) &\to \N_0, \\
e_{a_1} e_{a_2} \cdots e_{a_n} &\mapsto \gcd(a_1-a_2,a_2-a_3,\ldots,a_{n-1}-a_n), \ n \geq 2,\\
e_a &\mapsto 0,
    \end{align*}
    is well defined.
\end{lem}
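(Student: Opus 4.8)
The plan is to verify that $\deg$ is invariant under the defining relations of $\As^+(\R_{\infty})$, after first checking that the two clauses of its definition (single generators versus words of length $\geq 2$) never interact. Since every defining relation of $\As^+(\R_{\infty})$ is length-preserving, the length $l$ is a well-defined monoid homomorphism to $\N_0$; hence any two words representing the same element have equal length, and it is enough to argue length by length. A word of length $1$ is a single generator $e_a$, and no defining relation (each having length $\geq 2$ on both sides) can be applied to it, so the class of $e_a$ in $\As^+(\R_{\infty})$ is the singleton $\{e_a\}$ and $\deg$ is unambiguous on length-$1$ elements. So fix $n \geq 2$ and a word $w = e_{a_1} e_{a_2} \cdots e_{a_n}$, and set $d_j = a_j - a_{j+1}$ for $1 \leq j \leq n-1$, so that $\deg(w) = \gcd(d_1, \ldots, d_{n-1})$.

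Next I would record how a single braiding move changes the difference vector $(d_1, \ldots, d_{n-1})$. Suppose the move rewrites the subword in positions $i, i+1$ from $e_{a_i} e_{a_{i+1}}$ to $e_{-a_{i+1}+2a_i}\, e_{a_i}$, producing a word $w'$ with difference vector $(d'_1, \ldots, d'_{n-1})$. A one-line computation gives $d'_j = d_j$ for $j \notin \{i-1, i, i+1\}$, and at the three remaining indices
\begin{equation*}
  d'_i = d_i, \qquad d'_{i-1} = d_{i-1} - d_i, \qquad d'_{i+1} = d_{i+1} + d_i,
\end{equation*}
where $d'_{i-1}$ (resp. $d'_{i+1}$) is simply omitted when $i = 1$ (resp. $i = n-1$). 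Since adding or subtracting one coordinate of an integer tuple to or from another coordinate does not change the gcd of the tuple, this yields $\gcd(d'_1, \ldots, d'_{n-1}) = \gcd(d_1, \ldots, d_{n-1})$, i.e. $\deg(w') = \deg(w)$. The reverse braiding move is handled the same way (or, more cheaply, by symmetry, the defining relation being an equality).

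Finally, two words represent the same element of $\As^+(\R_{\infty})$ precisely when they are joined by a finite chain of braiding moves; together with the invariance just established, this shows $\deg(w)$ depends only on the element represented by $w$, so $\deg$ is well defined. The only points needing mild attention are the bookkeeping at the boundary indices $i \in \{1, n-1\}$ (and the degenerate case $n = 2$, where both boundary terms are omitted and the single difference $d_1$ is literally unchanged), together with the disjointness of the length-$1$ and length-$\geq 2$ classes; the crux is the elementary remark that a braiding move acts on the vector of consecutive differences by an operation that preserves its gcd, so there is no genuine obstacle here.
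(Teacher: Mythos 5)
Your proof is correct and takes essentially the same approach as the paper: both verify that a single braiding move leaves the gcd of the differences unchanged. The only cosmetic difference is bookkeeping — the paper first replaces the consecutive differences by the gcd of \emph{all} pairwise differences $a_i - a_j$ (which sidesteps your boundary cases $i \in \{1, n-1\}$), while you track the consecutive-difference vector directly and observe that the move acts by gcd-preserving elementary operations.
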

Here $\As^+_{\geq 1}(\R_{\infty})$ is the set of elements in $\As^+(\R_{\infty})$ of length $\geq 1$.

\begin{proof}
The $\gcd$ from the above definition can be replaced with the $\gcd$ of all the differences $a_i - a_j$, where $1 \leq i < j \leq n$. To show its stability under braiding moves, use $x-y = (-y+2x)-x$ and $a-(-y+2x) = (a-y)-2(x-y)$.
\end{proof}

\begin{defn}
 The number $\deg(w)$ is called the \emph{density} of $w \in \As^+_{\geq 1}(\R_{\infty})$.   
\end{defn}

The density vanishes if and only if we have $a_1=a_2=\ldots = a_n$. This happens precisely for the powers $e_a^n$. They are called \emph{frozen} since they cannot be rewritten in another way: no non-trivial braiding moves apply to them. In what follows, we will mainly consider the interesting case $\deg(w)>0$.

\begin{lem}
For $w \in \As^+_{\geq 2}(\R_{\infty})$ with $\deg(w)>0$, let $\alpha(w) \in \N_0$ be the minimal representative of the index $a$ of any of its letters $e_a$ modulo $\deg(w)$. For $w = e_a^n \in \As^+_{\geq 1}(\R_{\infty})$ with $\deg(w)=0$, put $\alpha(w)=a$. This yields a well-defined map
    \begin{align*}
        \alpha \colon \As^+_{\geq 1}(\R_{\infty}) &\to \Z.
    \end{align*}
\end{lem}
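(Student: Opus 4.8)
The plan is to show that $\alpha$ is invariant under braiding moves, exactly as was done for the earlier invariants $l_0, l_1, \w$, and $\deg$. Since $\deg$ is already known to be a well-defined function on $\As^+_{\geq 1}(\R_{\infty})$, and the two regimes $\deg(w) = 0$ and $\deg(w) > 0$ are preserved by braiding moves (a braiding move does not change $\deg$, in particular does not change whether it vanishes), it suffices to check invariance separately in each regime.

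In the frozen case $\deg(w) = 0$, a word $w$ of length $\geq 1$ with $\deg(w) = 0$ must be of the form $e_a^n$ (as noted right after the definition of density), and no non-trivial braiding move applies, so $\alpha(w) = a$ is trivially well defined; the only thing to observe is that the prescription ``minimal representative modulo $\deg(w)$'' would be vacuous here, which is why the separate clause is given. In the main case $\deg(w) = d_0 \coloneq \deg(w) > 0$, I first need to check that the prescription is unambiguous for a \emph{fixed} word $w = e_{a_1}\cdots e_{a_n}$: all indices $a_i$ are congruent modulo $d_0$, since $d_0 = \gcd(a_1 - a_2, \ldots, a_{n-1}-a_n)$ divides each difference $a_i - a_{i+1}$ and hence every $a_i - a_j$. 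So ``the minimal representative of the index of any of its letters modulo $\deg(w)$'' is a well-defined nonnegative integer depending only on $w$ as a word.

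The remaining and central point is stability under a single braiding move. A braiding move replaces a factor $e_x e_y$ by $e_{-y+2x}\, e_x$ (or the reverse) somewhere inside the word, leaving all other letters untouched. Since $\deg$ is unchanged, it is enough to exhibit, among the letters of the new word, one whose index is congruent modulo $d_0$ to one of the old indices: the untouched letters already do this when $n \geq 3$, and when $n = 2$ the new index $-y + 2x$ satisfies $(-y+2x) - x = x - y \equiv 0 \pmod{d_0}$ since $d_0 \mid x - y$, while $x$ itself is an old index. Either way the congruence class of the indices modulo $d_0$ is preserved, hence so is the minimal nonnegative representative, i.e. $\alpha$. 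Chaining this over a rewriting sequence, any two words representing the same element of $\As^+_{\geq 1}(\R_{\infty})$ have the same $\alpha$. I do not anticipate a genuine obstacle here; the only mild subtlety is bookkeeping the $n = 2$ boundary case and making explicit that the two density regimes are not mixed by braiding moves, both of which are handled by the observations above.
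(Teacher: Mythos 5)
Your proof is correct and is essentially the paper's argument: the paper's one-line proof ("by the definition of $\deg(w)$, all letters from any word representing $w$ give the same residue modulo $\deg(w)$") is exactly what you spell out, with the well-definedness of $\deg$ guaranteeing the two regimes do not mix and a surviving letter under each braiding move guaranteeing the common residue class is preserved. Your extra bookkeeping (the $n=2$ case, the frozen case) just makes explicit what the paper leaves implicit.
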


\begin{proof}
By the definition of $\deg(w)$, all letters from any word representing $w$ give the same residue modulo $d$.
\end{proof}

\begin{defn}
 The number $\alpha(w)$ is called the \emph{anchor} of $w \in \As^+_{\geq 1}(\R_{\infty})$.   
\end{defn}

Note that for any word $1 \neq w = e_{a_1}\ldots e_{a_n}$, the coset $\alpha(w) + \deg(w)\cdot \Z$ is minimal amongst all cosets in $\Z$ that contain the integers $a_1,\ldots,a_n$.

To better understand how these multiple invariants work, the reader might look at their values for different elements:
\begin{center}
\begin{tabular}{>$c<$||*{4}{>$c<$|}>$c<$}
     w & l_0(w) & l_1(w) & \w(w) & \deg(w) & \alpha(w)\\ \hline
     e_1e_1 & 0 & 2 & 0 & 0 & 1 \\ \hline
     e_0e_{-1} & 1 & 1 & 1 & 1 & 0 \\ \hline
     e_1e_{-1} & 0 & 2 & 2 & 2 & 1 \\ \hline
     e_1e_{-1}e_1 & 0 & 3 & 3 & 2 & 1 \\ \hline
     e_{-6}e_{-2}e_{-2} & 3 & 0 & -6 & 4 & 2 
\end{tabular}
\end{center}

To get a complete family of invariants, we need to replace $l_0$ and $l_1$ with something finer.

For integers $d,a$, we denote by $\As_{d,a}^+(R_{\infty})$ the set of all $w \in \As^+_(\R_{\infty})$ such that $\deg(w) = d$ and $\alpha(w) = a$.

\begin{lem}
For $w \in \As_{d,a}^+(R_{\infty})$ where $d > 0$ and $0 \leq a < d$, take any of its representatives, and replace each of its letters $e_x$ with $e_{(x-a)/d}$. This yields a well-defined element $\overline{w}$ with $\deg(\overline{w})=1$.
\end{lem}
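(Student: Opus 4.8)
The plan is to realise the letter-wise substitution $e_x \mapsto e_{(x-a)/d}$ as a map on words that is compatible with the defining relations of $\As^+(\R_\infty)$, to deduce from this that it descends to a well-defined element $\overline{w}$, and then to compute its density by a direct $\gcd$ manipulation.

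First I would check that the substitution even makes sense on every representative of $w$. Since $\deg$ is a well-defined invariant and $\deg(w) = d > 0$, the element $w$ has length $\geq 2$ (the density map sends every $e_a$ to $0$), so every representative $e_{x_1}e_{x_2}\cdots e_{x_n}$ of $w$ satisfies $\gcd(x_1 - x_2, \ldots, x_{n-1} - x_n) = d$. In particular $d \mid x_i - x_j$ for all $i,j$, so the $x_i$ all lie in a single residue class modulo $d$; since $\alpha(w) = a$ is by definition the minimal representative of that class and $0 \leq a < d$, we get $x_i \equiv a \pmod d$ for all $i$. Hence each $(x_i - a)/d$ is an integer, and the rule $\phi\colon e_x \mapsto e_{(x-a)/d}$ is defined on all words in the letters of the admissible alphabet $\{ e_x : x \equiv a \pmod d \}$, which contains every representative of $w$.

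Next I would verify that $\phi$ respects braiding moves. A braiding move replaces a subword $e_x e_y$ by $e_{x \qu y} e_x = e_{-y+2x} e_x$, or conversely. Putting $u = (x-a)/d$ and $v = (y-a)/d$, one has $(-y+2x-a)/d = \bigl(-(y-a) + 2(x-a)\bigr)/d = -v + 2u = u \qu v$, so $\phi$ sends $e_x e_y$ to $e_u e_v$ and $e_{x \qu y} e_x$ to $e_{u \qu v} e_u$, which is again an instance of the defining relation; moreover $-y+2x \equiv a \pmod d$, so $\phi$ preserves the admissible alphabet. Consequently, if two words represent the same element of $\As^+(\R_\infty)$ — equivalently, are linked by a finite chain of braiding moves — then their $\phi$-images are linked by a finite chain of braiding moves too, hence represent a common element. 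That common element is $\overline{w}$, and this argument shows it is independent of the chosen representative of $w$.

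Finally, for the density, pick a representative $e_{x_1}\cdots e_{x_n}$ of $w$ with $n \geq 2$; then $e_{u_1}\cdots e_{u_n}$ with $u_i = (x_i - a)/d$ represents $\overline{w}$, and by the density lemma
\[
\deg(\overline{w}) = \gcd(u_1 - u_2, \ldots, u_{n-1} - u_n) = \gcd\!\Big(\tfrac{x_1 - x_2}{d}, \ldots, \tfrac{x_{n-1} - x_n}{d}\Big) = \tfrac{1}{d}\gcd(x_1 - x_2, \ldots, x_{n-1} - x_n) = \tfrac{1}{d}\,\deg(w) = 1,
\]
where the factor $\tfrac1d$ may be pulled out because $d$ divides each difference $x_i - x_{i+1}$. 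I do not anticipate a genuine obstacle here: the only subtlety is the appeal in the first step to the already established well-definedness of $\deg$ and $\alpha$, which is precisely what guarantees that the substitution is defined uniformly on all representatives of $w$; the remainder is just the affine identity $x \qu y = -y+2x$ applied to the change of variable $x \mapsto (x-a)/d$, which is inverse to the quandle endomorphism $z \mapsto dz + a$ of $\R_\infty$.
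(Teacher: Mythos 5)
Your proof is correct and follows essentially the same route as the paper: the paper's argument is just a terser version, noting that $(x-a)/d$ is an integer by the definitions of $\deg(w)$ and $\alpha(w)$, that the affine map $x \mapsto (x-a)/d$ intertwines the solution $r$ (which is exactly your explicit check that braiding moves go to braiding moves), and that $\deg(\overline{w}) = \deg(w)/d = 1$. Your added details (integrality on every representative, the chain-of-moves argument, pulling $1/d$ out of the $\gcd$) are precisely the unpacking of those three steps, so there is nothing to correct.
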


Note that the index $(x-a)/d$ can alternatively be expressed as $\lfloor \frac{x}{d} \rfloor$, using the floor function.

\begin{proof}
The numbers $(x-a)/d$ are integers due to the definition of $d=\deg(w)$ and $a=\alpha(w)$. The operation is well-defined since the map $f \colon x \mapsto (x-a)/d$ intertwines the solution $r$ from \cref{E:ReflectionSol}, in the sense of $r \circ (f \times f) = (f \times f) \circ r$. Finally, $\deg(\overline{w}) = \deg(w)/ d =1$.
\end{proof}

\begin{defn}
 The element $\overline{w}$ is called the \emph{essentialisation} of $w \in \As^+_{\geq 2}(\R_{\infty})$. The numbers $\overline{l}_0(w) = l_0(\overline{w})$ and $\overline{l}_1(w) = l_1(\overline{w})$ are called the \emph{essential even} and the \emph{odd length} of $w$ respectively. 
\end{defn}

Here is what this looks like in our examples:
\begin{center}
\begin{tabular}{>$c<$||*{2}{>$c<$|}>$c<$}
     w & \overline{w} & \overline{l}_0(w) & \overline{l}_1(w) \\ \hline
     e_1e_{-1} & e_0e_{-1} & 1 & 1  \\ \hline
     e_1e_{-1}e_1 &  e_0e_{-1}e_0 & 2 & 1 \\ \hline
     e_{-6}e_{-2}e_{-2} & e_{-2}e_{-1}e_{-1} & 1 & 2 
\end{tabular}
\end{center}

Given $d=\deg(w)>0$, $a=\alpha(w)$, and $\overline{w}$, one can reconstruct $w$. We will thus restrict our attention to the case $\deg(w)=1$ (corresponding to the essentialisations) in the major part of what follows.

\begin{defn}
    The elements $w \in \As^+_{\geq 2}(\R_{\infty})$ with $\deg(w)=1$ are called \emph{full}. The set of all such elements is called the \emph{full reflection semigroup}, denoted by $\FullR$.
\end{defn}
This object plays a role analogous to the full transposition monoids $\Full_d$ in Section \ref{S:MonoidsTransp}. It is indeed a semigroup, because of the following property.

\begin{lem}
    The product of any full element $w \in  \FullR$ with any $v \in \As^+(\R_{\infty})$ is full. That is, both $wv$ and $vw$ lie in $\FullR$.
\end{lem}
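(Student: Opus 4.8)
The plan is to show that the density function $\deg$ behaves well under multiplication: multiplying by a full element (one with density $1$) forces the product to also have density $1$, provided the product still has length $\geq 2$, which is automatic here since $w$ alone has length $\geq 2$.

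First I would unwind the definition. Given $w \in \FullR$ represented by a word $e_{a_1}\cdots e_{a_m}$ with $m \geq 2$ and $\deg(w)=1$, and $v \in \As^+(\R_\infty)$ represented by a word $e_{b_1}\cdots e_{b_n}$, I concatenate to get a representative $e_{a_1}\cdots e_{a_m}e_{b_1}\cdots e_{b_n}$ of $wv$ (and similarly $e_{b_1}\cdots e_{b_n}e_{a_1}\cdots e_{a_m}$ for $vw$). By the lemma characterising $\deg$, the density of $wv$ is the gcd of all differences of consecutive indices in this concatenated word, which equals the gcd of \emph{all} pairwise differences of the indices $a_1,\ldots,a_m,b_1,\ldots,b_n$. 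This gcd divides $\gcd(a_i - a_j)$ over all $i,j$, which is exactly $\deg(w)=1$; hence $\deg(wv)=1$. The same argument applies verbatim to $vw$ since concatenation order does not affect the set of pairwise differences. I should note that this computation is legitimate because $\deg$ is well defined, i.e. independent of the chosen representative, as established in the preceding lemma.

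The only genuine point needing care is the edge case $n = 0$, i.e. $v = 1$: then $wv = w \in \FullR$ trivially, so one may assume $n \geq 1$, in which case $wv$ has length $m + n \geq 3 \geq 2$ and the density is indeed defined on it. (If one allows $v = e_b$ a single letter, the concatenated word still has length $\geq 3$, so there is no issue.) I expect no real obstacle here — the statement is essentially the observation that gcd of a superset of differences divides the gcd of any subset, combined with the fact that $\deg(w)=1$ is the minimum possible positive value. The one thing to double-check is that $wv$ is not frozen (not a power $e_a^k$): but a frozen element has density $0$, and we have just shown $\deg(wv)=1 \neq 0$, so this cannot happen.

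\begin{proof}
    Since $1 \in \As^+(\R_{\infty})$ acts trivially, we may assume $v \neq 1$; write $v = e_{b_1} \cdots e_{b_n}$ with $n \geq 1$ and $w = e_{a_1} \cdots e_{a_m}$ with $m \geq 2$. Then $e_{a_1} \cdots e_{a_m} e_{b_1} \cdots e_{b_n}$ is a representative of $wv$ of length $m + n \geq 2$, so $\deg(wv)$ is defined. By the description of $\deg$ from the lemma above, $\deg(wv)$ equals the $\gcd$ of all differences $c_i - c_j$ of the indices $c_1, \ldots, c_{m+n} = a_1, \ldots, a_m, b_1, \ldots, b_n$ of this word. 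In particular this $\gcd$ divides $\gcd(a_i - a_j : 1 \leq i < j \leq m) = \deg(w) = 1$, whence $\deg(wv) = 1$ and $wv \in \FullR$. The same argument, applied to the representative $e_{b_1} \cdots e_{b_n} e_{a_1} \cdots e_{a_m}$ of $vw$ (whose multiset of indices is the same), gives $\deg(vw) = 1$, so $vw \in \FullR$ as well.
\end{proof}
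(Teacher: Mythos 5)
Your proof is correct and follows essentially the same route as the paper's (one-line) argument: the differences entering the $\gcd$ defining $\deg(wv)$ or $\deg(vw)$ contain those defining $\deg(w)=1$, so the density of the product divides $1$ and hence equals $1$. Your extra checks (the case $v=1$, well-definedness of $\deg$, length $\geq 2$) are fine but not essential additions.
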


\begin{proof}
    The differences in the $\gcd$ used to compute $\deg(wv)$ or $\deg(vw)$ include those used to compute $\deg(w)$.
\end{proof}

The purpose of the remainder of this section is to prove that the $5$ invariants above completely determine an element from $\As^+(\R_{\infty})$:

\begin{thm}\label{T:FullInfariantsStrMonRefl}
    Two elements coincide in $\As_{\geq 1}^+(\R_{\infty})$ if and only if they have the same values of the invariants $\deg$, $\alpha$, $\w$, $\overline{l}_0$, and $\overline{l}_1$.
\end{thm}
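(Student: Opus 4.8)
The plan is to prove the non-trivial direction: if two elements $w, w' \in \As^+_{\geq 1}(\R_{\infty})$ have the same five invariants, then they coincide. The other direction is immediate since every invariant is well defined (already established in the lemmas above). I would first dispose of the frozen case $\deg(w) = 0$: here $w = e_a^n$ for $a = \alpha(w)$, and the length $n = l_0(w) + l_1(w)$ is recovered from the essential lengths (when $\deg = 0$ one simply uses the ordinary lengths, or notes that frozen words admit no braiding moves so they are trivially determined by $a$ and $n$, and $n$ is visible from $\overline{l}_0 + \overline{l}_1$ together with the parity of $a$). So the heart of the matter is the case $\deg(w) > 0$. Using the essentialisation construction, I can reduce to $\deg(w) = 1$: if $\deg(w) = \deg(w') = d > 0$ and $\alpha(w) = \alpha(w') = a$, then $w$ and $w'$ are reconstructed from their essentialisations $\overline{w}, \overline{w'}$ by the inverse of the index substitution $x \mapsto (x-a)/d$, so it suffices to show $\overline{w} = \overline{w'}$ in $\FullR$; and $\overline{w}, \overline{w'}$ have $\deg = 1$, the same $\w$ (the weight is multiplied by a fixed affine factor under essentialisation — one should check this scaling is a bijection on the relevant invariants, which is routine), and the same $\overline{l}_0, \overline{l}_1$ by hypothesis.

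Thus the crux is: \emph{a full element $w \in \FullR$ is determined by $\w(w)$, $\overline{l}_0(w) = l_0(w)$, and $\overline{l}_1(w) = l_1(w)$.} The strategy here mirrors the transposition case (Theorem~\ref{T:FullInjects}): I would produce a normal form for elements of $\FullR$ and show every word can be braided to it, with the normal form depending only on these three numbers. A natural candidate normal form is something like a product of a prescribed "spine" word of minimal length whose letters use only indices $0$ and $1$ (realising a given parity pattern and a given weight), followed by a controlled number of square factors $e_0^2$ or $e_1^2$ to absorb the excess length — exactly as squares $e_{(i,j)}^2$ absorbed excess in $\Full_d$. Concretely: since $\deg(w)=1$, some letter is even and some is odd (two consecutive letters with difference $\pm 1$ exist after braiding), so one can first use braiding moves to bring $w$ to a form where the indices are as small as possible; relation \eqref{E:QuasiCommInAsR} lets one shuffle and conjugate letters, and \eqref{eq:squares_are_central_over_takz} lets one collect squares. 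The key algebraic input is an analogue of Lemma~\ref{L:HandlingSquares}: any full product of squares in $\FullR$ can be rewritten canonically, say as $e_0^{2p} e_1^{2q}$ or just $e_0^2 e_1^{2r}$, with the exponents read off from $l_0$ and $l_1$. One then argues that after extracting a maximal "square part", the remaining reduced word is forced — its length is minimal, its letters lie in $\{0,1\}$, and a reduced word in $e_0, e_1$ of given even/odd lengths and given weight is unique (this is essentially the $d=2$ computation, where $\As^+(\R_2) \cong \N_0^2$, decorated by the weight; one checks a reduced word $e_{a_1}\cdots e_{a_n}$ with $a_i \in \{0,1\}$ is determined by the sequence of parities, and the weight then pins down that sequence up to the freedom already quotiented out).

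The main obstacle I anticipate is exactly the rewriting/normal-form lemma — showing that an arbitrary word representing a full element can be braided into the proposed canonical form, controlling simultaneously the three invariants. The difficulty is the same flavour as in Lemma~\ref{L:HandlingSquares} and its surrounding "contamination" argument: one must show squares can always be moved next to each other and merged, which uses fullness ($\deg = 1$ guarantees enough "connectivity" among indices to route contaminations), and one must show the non-square remainder reduces to minimal length with indices in $\{0,1\}$. Formula \eqref{E:QuasiCommInAsR} is the workhorse here, playing the role that relations \eqref{E:e2Central}--\eqref{E:e2Equal} played for transpositions. A secondary subtlety is bookkeeping the weight through these moves: each braiding move preserves $\w$ (by the lemma defining $\w$), so this is automatic, but one must make sure the final normal form genuinely realises the prescribed weight — i.e. that the map (parity pattern, square counts) $\mapsto$ weight is understood well enough to invert. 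I expect the parity analysis to be clean and the square-handling lemma to require the most care; once both are in hand, the theorem follows by comparing normal forms.
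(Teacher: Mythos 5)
Your overall architecture is the same as the paper's, and it is sound: the easy direction is well-definedness of the invariants, the frozen case is immediate, and essentialisation reduces the problem to showing that a full element is determined by $\w$, $l_0$, $l_1$ (you should also treat lengths $2$ and $3$ separately, via the presentations $e_{a+k\w(w)}e_{a+(k-1)\w(w)}$ and $e_{\w(w)+\deg(w)}^2e_{\w(w)}$, since the general normal form only starts in length $4$). The paper indeed settles the crux by a normal-form theorem (\cref{T:ReflNormalForm}) proved with exactly the tools you name: centrality of squares \eqref{eq:squares_are_central_over_takz}, the quasi-commutation formula \eqref{E:QuasiCommInAsR}, a square-handling argument, and an arithmetic $\gcd$ lemma.

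The genuine gap is in the concrete normal form you propose. A word all of whose letters have indices in $\{0,1\}$ (your spine plus squares $e_0^2,e_1^2$) has weight bounded in absolute value by its length, whereas braiding moves preserve both length and weight, and full elements of any fixed length $\geq 3$ have unbounded weight: $e_{c+1}^2e_c$ has density $1$, length $3$ and weight $c$ for arbitrary $c$. So the weight is not a ``secondary subtlety'' that the map (parity pattern, square counts) can absorb — no canonical form over the indices $\{0,1\}$ exists, and the canonical word must carry one letter with an unconstrained index; the paper's form is $e_0^ke_1^le_c$ with $k,l>0$ and $c\in\Z$ free. Relatedly, $\{0,1\}$ is not closed under $x \qu y=2x-y$, so one cannot argue inside ``reduced words in $e_0,e_1$'' as in $\As^+(\R_2)$: reducing modulo $2$ destroys precisely the weight information. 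Finally, to reconstruct an element from $(\w,l_0,l_1)$ one also needs to know how many canonical presentations it admits (exactly one when $l_0$ or $l_1$ equals $1$, exactly two otherwise, differing in the parity of $c$); this uniqueness count is the delicate part of \cref{T:ReflNormalForm} and is missing from your plan. With the corrected normal form and that uniqueness analysis, your outline becomes the paper's proof.
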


This will also allow us to construct a normal form in this monoid, and to completely determine the algebraic structure of the full part $\FullR$.

Let us start with considering short elements in $\As^+(\R_{\infty})$.

In length $1$, every element admits a unique presentation, $e_a$, and the index $a$ is exactly the weight of the element: $a = \w(e_a)$.

In length $2$, a braiding move can be written as 
\[w = e_ae_b = e_{-b+2a} e_a = e_{a+\w(w)}e_{a}, \]
hence the set of all the words representing $w$ is 
\begin{equation}\label{E:length2}
  \{ e_{a+k\w(w)}e_{a+(k-1)\w(w)} : k \in \Z\}.  
\end{equation}
Such a set is uniquely determined by $\w(w)$ and $\alpha(w)$ (which is $a$ modulo $\w(w)$). Note that in degree $2$ the density $\deg(w)$ is just the absolute value of the weight.

In the first interesting case, that of length $3$, the weight and the density provide a complete invariant:
\begin{lem} \label{lem:three_element_lemma}
    Any length $3$ element $w \in \As^+(\D_{\infty})$ can be presented as $g = e_{\w(w)+\deg(w)}^2e_{\w(w)}$.
\end{lem}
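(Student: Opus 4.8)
\emph{Proof proposal.} The plan is to encode a length-$3$ word by the pair of its consecutive differences and to reinterpret the braiding moves as the action of $\operatorname{SL}_2(\Z)$ on $\Z^2$.

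Write a length-$3$ element as $w = e_a e_b e_c$ and set $u = a-b$, $v = b-c$. Then $w$ is completely recovered from the pair $(u,v)$ together with its weight $s = \w(w)$, via $a = s+v$, $b = s-u+v$, $c = s-u$; moreover $\deg(w) = \gcd(|u|,|v|)$, and the word $e_{s+d}^2 e_s$ (with $d = \deg(w)$) is exactly the one corresponding to the pair $(0,d)$.

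First I would translate the two braiding moves available on a three-letter word — the one applied to the first pair, $e_a e_b \mapsto e_{2a-b}e_a$, and the one applied to the last pair, $e_b e_c \mapsto e_{2b-c}e_b$ — into transformations of the pair $(u,v)$. A short computation gives $(u,v)\mapsto(u,u+v)$ and $(u,v)\mapsto(u-v,v)$ respectively; their inverses $(u,v)\mapsto(u,v-u)$ and $(u,v)\mapsto(u+v,v)$ are also realised by braiding moves, since these may be applied in both directions.

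The crux is that these elementary transvections generate $\operatorname{SL}_2(\Z)$ (in particular $-\Id$ and the coordinate swap $(u,v)\mapsto(v,-u)$ are obtained), and that $\operatorname{SL}_2(\Z)$ acts transitively on $\{(u,v)\in\Z^2 : \gcd(|u|,|v|) = d\}$ for each fixed $d \geq 0$; equivalently, the Euclidean algorithm together with these sign-and-swap moves carries any such vector to $(0,d)$. Hence the pair of $w$ can be carried to $(0,\deg(w))$ by a finite sequence of braiding moves, which says exactly that $w = e_{\w(w)+\deg(w)}^2 e_{\w(w)}$ in $\As^+(\R_{\infty})$. The case $\deg(w)=0$ corresponds to $(u,v)=(0,0)$, namely the frozen element $w = e_s^3$, which already has the claimed form. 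The only delicate point is keeping track of signs in the two transvections and noticing that the inverse braiding moves are genuinely needed: the two maps $(u,v)\mapsto(u,u+v)$ and $(u,v)\mapsto(u-v,v)$, without their inverses, generate only a proper submonoid of $\operatorname{SL}_2(\Z)$ and would not suffice to reach $(0,d)$ from an arbitrary starting vector.
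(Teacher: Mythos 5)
Your proposal is correct and follows essentially the same route as the paper: the paper's proof also assigns to $e_ae_be_c$ the difference vector $(a-b,b-c)$, observes that the two braiding moves act as the transvections $(d_1,d_2)\mapsto(d_1,d_1+d_2)$ and $(d_1,d_2)\mapsto(d_1-d_2,d_2)$, and runs the Euclidean algorithm to reach $(0,\deg(w))$, after which weight invariance pins down the first index as $\w(w)+\deg(w)$. Your $\operatorname{SL}_2(\Z)$-transitivity phrasing merely makes explicit the sign bookkeeping and the use of both directions of the relations, which the paper's appeal to the Euclidean algorithm leaves implicit.
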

In particular, this presentation tells us that the anchor $\alpha(w)$ is precisely $\w(w)$ modulo $\deg(w)$.

\begin{proof}
    To a length $3$ word $e_ae_be_c$ representing $w$, assign the difference vector $(d_1,d_2) = (a-b,b-c)$. A rewriting step $e_ae_be_c \sim e_{-b+2a}e_{a}e_c$ or $e_ae_be_c \sim e_ae_{-c+2b}e_b$ changes the difference vector to $(d_1,d_2+d_1)$ or $(d_1-d_2,d_2)$ respectively. Performing the Euclidean algorithm on the couple $(d_1,d_2)$, we obtain an expression $e_{a'}e_{b'}e_{c'} \sim e_a e_b e_c$ with difference vector $(0,d)$, where $d = \gcd(d_1,d_2) = \deg(w)$. It follows that $w = e_{a'}^2a_{a'-d}$. Finally, $\w(w) = a'-a'+(a'-d) = a'-d$, yielding $a'=\w(w) +\deg(w)$.
\end{proof}

The length $2$ and length $3$ cases exhibit fundamentally different behaviour. Starting from length $4$, we at last get a uniform normal form:

\begin{thm}\label{T:ReflNormalForm}
    Any full element $w \in \As^+_{\geq 4}(\R_{\infty})$ can be written as
\begin{equation}\label{E:FinalReduction}
    w=e_0^ke_1^le_c, \qquad \text{ where } k,l > 0, \ c \in \Z.
\end{equation}
We emphasize that we allow $c \in \{0,1 \}$ here. This presentation is unique for $w$ with $l_0(w)$ or $l_1(w)$ equal to $1$. Other $w$ admit exactly two such presentations.
\end{thm}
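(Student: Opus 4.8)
The plan is to reduce both halves of the statement --- the existence of a presentation \eqref{E:FinalReduction} and the count of such presentations --- to elementary computations of the three invariants $l_0$, $l_1$, $\w$ on words of the form $e_0^ke_1^le_c$, and then to feed the outcome into \cref{T:FullInfariantsStrMonRefl}. The key preliminary remark is that for a full element $w$ (that is, $w\in\As^+_{\geq 2}(\R_{\infty})$ with $\deg(w)=1$) one automatically has $\alpha(w)=0$, hence $\overline{w}=w$ and $\overline{l}_i(w)=l_i(w)$; so, by \cref{T:FullInfariantsStrMonRefl}, a full element is completely determined by the triple $(\w(w),l_0(w),l_1(w))$. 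The second fact I would use repeatedly is the congruence $\w(v)\equiv l_1(v)\pmod 2$, valid for every $v\in\As^+(\R_{\infty})$: modulo $2$ the alternating sum defining $\w$ becomes the ordinary sum of indices, i.e.\ the number of odd indices.

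\textbf{Existence.} Start from a full $w$ with $l(w)\geq 4$. First note $l_0(w)\geq 1$ and $l_1(w)\geq 1$: if all indices of some representative were even (resp.\ odd), every difference in the $\gcd$ defining $\deg$ would be even, forcing $\deg(w)$ to be even or $0$, against $\deg(w)=1$. As $l_0(w)+l_1(w)=l(w)\geq 4$, at least one of the two is $\geq 2$; the (easily checked) automorphism $x\mapsto 1-x$ of the solution $\R_{\infty}$ interchanges even and odd indices, so I may assume $l_0(w)\geq 2$. Set $k=l_0(w)-1>0$ and $l=l_1(w)\geq 1$. Since $\w(e_0^ke_1^le_c)$ is affine in $c$ with leading coefficient $\pm1$, there is a unique $c\in\Z$ with $\w(e_0^ke_1^le_c)=\w(w)$; applying $\w(v)\equiv l_1(v)\pmod 2$ to both sides forces $c$ to be even, whence $l_0(e_0^ke_1^le_c)=k+1=l_0(w)$ and $l_1(e_0^ke_1^le_c)=l=l_1(w)$. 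The word $e_0^ke_1^le_c$ is full (consecutive indices include $0$ and $1$, so its density is $1$) and shares the triple $(\w,l_0,l_1)$ with $w$, hence equals $w$ by \cref{T:FullInfariantsStrMonRefl}.

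\textbf{Counting the presentations.} Suppose $e_0^ke_1^le_c=w$ with $k,l>0$. Reading off lengths gives $(l_0(w),l_1(w))=(k+1,l)$ if $c$ is even and $(k,l+1)$ if $c$ is odd; so the parity of $c$ determines $k$ and $l$, and then $\w(w)$ determines $c$ (whose parity is automatically the assumed one, again by $\w\equiv l_1\bmod2$). Thus there is at most one presentation with $c$ even, existing precisely when $k=l_0(w)-1>0$, i.e.\ $l_0(w)\geq 2$; and at most one with $c$ odd, existing precisely when $l=l_1(w)-1>0$, i.e.\ $l_1(w)\geq 2$. Since $l(w)\geq 4$: if $l_0(w)=1$ then $l_1(w)\geq 3$, so there is exactly one presentation (with $c$ odd), and symmetrically if $l_1(w)=1$; if $l_0(w)\geq 2$ and $l_1(w)\geq 2$ there are exactly two, and they are genuinely distinct since their exponent pairs $(l_0(w)-1,l_1(w))$ and $(l_0(w),l_1(w)-1)$ differ.

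\textbf{Main obstacle.} The only real subtlety --- the step I would write out most carefully --- is the parity bookkeeping: one must be sure that the $c$ pinned down by matching the weight always lies in the parity class compatible with the chosen exponents $(k,l)$, so that the resulting word really does have the prescribed $l_0$ and $l_1$. This is exactly what $\w(v)\equiv l_1(v)\pmod 2$ delivers, so it is short once that identity is recorded; everything else is routine arithmetic. If one preferred to keep the logical dependencies clean and not invoke \cref{T:FullInfariantsStrMonRefl} for existence, one could instead produce \eqref{E:FinalReduction} directly, by a Euclidean-algorithm-style sequence of braiding moves on the difference vector of a representative (in the spirit of \cref{lem:three_element_lemma}) together with the centrality of squares \eqref{eq:squares_are_central_over_takz}; that route is more laborious, whereas the counting half needs nothing beyond the well-definedness of $l_0$, $l_1$, $\w$ and never touches $\FullR$ more deeply.
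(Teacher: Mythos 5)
Your argument has a genuine logical gap: it is circular within the structure of this paper. Both your existence proof and (through it) your count of presentations rest entirely on \cref{T:FullInfariantsStrMonRefl}, the statement that the five invariants $\deg$, $\alpha$, $\w$, $\overline{l}_0$, $\overline{l}_1$ form a complete system. But in the paper \cref{T:FullInfariantsStrMonRefl} is not available at this point: it is deduced \emph{from} \cref{T:ReflNormalForm} (its proof reconstructs $\overline{w}$ precisely via the normal form you are trying to establish). The entire difficulty of the theorem is to show that an arbitrary full word of length $\geq 4$ can actually be \emph{rewritten}, by braiding moves, into the shape $e_0^ke_1^le_c$ --- equivalently, that elements sharing the three invariants $(\w,l_0,l_1)$ are connected by braiding moves. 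Your reduction of the theorem to ``match the invariants of $e_0^ke_1^le_c$ with those of $w$'' is clean and the parity bookkeeping ($\w\equiv l_1 \bmod 2$, one candidate $c$ per parity class, hence at most two presentations, exactly one when $l_0(w)=1$ or $l_1(w)=1$) is correct as far as it goes, but it assumes the hard part rather than proving it. The paper's proof supplies exactly that missing content: reduction of any word to the form $e_a^{2k}e_b^{2l}h$ via \cref{lem:three_element_lemma} and \cref{lem:powers}, the arithmetic \cref{lem:triple_gcd_lemma} to make the relevant three-letter segment full, the reduction \cref{pro:three_letter_reduction}, and \cref{L:ActionOf01}; moreover, the ``exactly two presentations'' claim needs one to show that \emph{both} parities of the last letter are reachable by rewriting, which the paper gets from the even/odd clause of \cref{lem:triple_gcd_lemma}. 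Your closing remark that one ``could instead'' run a Euclidean-algorithm-style rewriting is in fact the only non-circular route, and as written it is a one-line sketch, not a proof.

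A smaller point: your ``WLOG $l_0(w)\geq 2$'' via the automorphism $x\mapsto 1-x$ does not work as stated, since that automorphism sends $e_0^ke_1^le_c$ to $e_1^ke_0^le_{1-c}$, which is not of the required shape (and $e_1^k$ is central only for even $k$). This is harmless, because the case $l_1(w)\geq 2$ can be handled directly by taking $k=l_0(w)$, $l=l_1(w)-1$ and $c$ odd, exactly as your counting paragraph implicitly does --- but the main issue remains the reliance on \cref{T:FullInfariantsStrMonRefl}.
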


Our proof begins with several lemmata, interesting per se.

First, we will frequently work with the squares $e_{a}^2$ as if they were simple generators $e_{a}$. Let us explain why this is legitimate.

\begin{lem} \label{lem:powers}
    Let $e_{a_1} \cdots e_{a_m}$ and $e_{b_1} \cdots e_{b_m}$ be two words representing the same element $w \in \As^+(\D_{\infty})$. Let $k_1,\ldots,k_m \in \N$ be $m$ positive integers of the same parity. Then the word $e_{a_1} ^{k_1}\cdots e_{a_m}^{k_m}$ can be rewritten in $\As^+(\D_{\infty})$ as $e_{b_1}^{k_{\sigma(1)}} \cdots e_{b_m}^{k_{\sigma(m)}}$. Here $\sigma \in \Sym_m$ is some permutation. 
\end{lem}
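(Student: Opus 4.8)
The plan is to reduce the statement to the case where all the exponents $k_i$ are equal, and then to prove that case by imitating, word by word, whatever rewriting sequence relates $e_{a_1}\cdots e_{a_m}$ to $e_{b_1}\cdots e_{b_m}$ in $\As^+(\D_\infty)$. Concretely, I first observe that it suffices to treat $k_1 = \cdots = k_m$: indeed, since all $k_i$ have the same parity, write $k_i = K - 2n_i$ for a common large $K$ and non-negative integers $n_i$; by centrality of squares (\cref{eq:squares_are_central_over_takz}) the extra factors $e_{a_i}^{-2n_i}$, or rather the $n_i$ ``deficiency squares,'' can be collected, moved freely, and reattached at the end, so one only needs to know that $e_{a_1}^K\cdots e_{a_m}^K$ can be rewritten as $e_{b_1}^K\cdots e_{b_m}^K$ up to a permutation and that the bookkeeping of the deficiencies is permutation-compatible. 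Alternatively — and I think this is cleaner — one proves the uniform-exponent statement for every common value $k$ at once, since a single braiding move on a block of $k$-th powers produces a block of $k$-th powers, with the two blocks swapped and one of the indices conjugated.

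The heart of the argument is the following claim about a \emph{single} braiding move applied to uniform powers: if $e_{a_1}\cdots e_{a_m} \leadsto e_{a_1}\cdots e_{a_{i-1}}\, e_{a_i \qu a_{i+1}}\, e_{a_i}\, e_{a_{i+2}}\cdots e_{a_m}$ is one defining relation, then for any common exponent $k$ one has
\[
e_{a_1}^{k}\cdots e_{a_m}^{k} = e_{a_1}^{k}\cdots e_{a_{i-1}}^{k}\, e_{a_i \qu a_{i+1}}^{k}\, e_{a_i}^{k}\, e_{a_{i+2}}^{k}\cdots e_{a_m}^{k}
\]
in $\As^+(\D_\infty)$. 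This is where \cref{E:QuasiCommInAsR} does the work: the block $e_{a_i}^k e_{a_{i+1}}^k$ should be pushed past each other using the quasi-commutation formula $we_a = e_{(-1)^{l(w)}a + 2\w(w)}w$ repeatedly, with $w$ ranging over prefixes $e_{a_i}^j$. Since $l(e_{a_i}^j) = j$ and $\w(e_{a_i}^j)$ is $a_i$ or $0$ according to the parity of $j$, one computes that $e_{a_i}^k e_{a_{i+1}} = e_{c} e_{a_i}^k$ with $c = a_{i+1}$ if $k$ is even and $c = -a_{i+1}+2a_i = a_i \qu a_{i+1}$ if $k$ is odd; iterating over all $k$ copies of $e_{a_{i+1}}$ shows $e_{a_i}^k e_{a_{i+1}}^k = e_{a_i\qu a_{i+1}}^k e_{a_i}^k$ when $k$ is odd (and equals $e_{a_{i+1}}^k e_{a_i}^k$, i.e. a plain commutation, when $k$ is even — which is still a block swap with a conjugation that happens to be trivial). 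Either way the block structure and the common exponent are preserved, and exactly one index gets replaced by its $\qu$-conjugate under the neighbouring one, matching the single braiding move on the $e_{a_j}$'s; in particular, the multiset of blocks has changed only by the move. Then one tracks the permutation: each braiding move transposes two adjacent blocks, so composing the moves that take $e_{a_1}\cdots e_{a_m}$ to $e_{b_1}\cdots e_{b_m}$ yields a permutation $\sigma$ recording how blocks have been reordered, giving $e_{a_1}^k\cdots e_{a_m}^k = e_{b_1}^{k_{\sigma(1)}}\cdots e_{b_m}^{k_{\sigma(m)}}$ (here with all $k_i = k$, but the $\sigma$ is what carries over to the general mixed-exponent statement).

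Finally I would descend from equal exponents to the general mixed case: given $k_1,\dots,k_m$ of common parity, set $k = \max_i k_i$ (if $k$ has the wrong parity, take $k = \max_i k_i + 1$, still of the right parity after adjusting — actually since all $k_i$ share a parity, $\max_i k_i$ already has it). Write $e_{a_1}^{k_1}\cdots e_{a_m}^{k_m} = e_{a_1}^{k}\cdots e_{a_m}^{k}\cdot \prod_i e_{a_i}^{-(k-k_i)}$ — this is not literally valid in the monoid, so instead argue: insert, using centrality of squares, the ``padding'' squares $e_{a_i}^{k-k_i}$ (each $k - k_i$ is even) anywhere in $e_{a_1}^{k_1}\cdots e_{a_m}^{k_m}$ to produce $e_{a_1}^{k}\cdots e_{a_m}^{k}$ up to the free movement of central squares; apply the equal-exponent result to rewrite this as $e_{b_1}^{k}\cdots e_{b_m}^{k}$ with swap permutation $\sigma$; then remove the padding squares — but now the square $e_{a_i}^{k-k_i}$, after being carried through the rewriting, has been conjugated and ends up sitting on the block that $a_i$ migrated to, namely block $\sigma^{-1}(\cdot)$, so removing it lowers the exponent of $e_{b_{\sigma(i)}}$ back by $k - k_i$, i.e. to $k_i$, giving exactly $e_{b_1}^{k_{\sigma(1)}}\cdots e_{b_m}^{k_{\sigma(m)}}$. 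The main obstacle I anticipate is precisely this last bookkeeping: making rigorous that the padding squares travel \emph{with} their block under the rewriting (so that one can match ``remove the pad on $a_i$'' with ``lower the exponent of $b_{\sigma(i)}$''), rather than getting tangled up or duplicated. Phrasing the equal-exponent lemma not as a bare equality but as a statement about a bijection between the $k$ letters in block $i$ on the left and the $k$ letters in block $\sigma(i)$ on the right — induced by following each individual letter through the moves — should make the padding argument transparent, at the cost of a slightly heavier formulation.
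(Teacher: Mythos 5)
Your single-move lifting breaks down exactly in the even-exponent case, which is the case the paper actually needs later (products of squares). Your own computation via \eqref{E:QuasiCommInAsR} gives, for even $k$, only $e_{a_i}^k e_{a_{i+1}}^k = e_{a_{i+1}}^k e_{a_i}^k$ — a plain commutation in which \emph{no} index is conjugated — and yet you go on to assert that "exactly one index gets replaced by its $\qu$-conjugate, matching the single braiding move". It does not: if each lifted move merely permutes blocks, then mimicking the whole rewriting sequence ends with a word whose indices are a permutation of $a_1,\dots,a_m$, whereas the target word carries the indices $b_1,\dots,b_m$, and these multisets differ in general (already for $e_0e_1=e_{-1}e_0$ in $\As^+(\R_{\infty})$). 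Commutations alone can never change an index, so the even case does not close as written. The identity you actually need, $e_x^p e_y^q = e_{x\qu y}^q e_x^p$ for even $p,q$, is true, but it is obtained by a different manipulation than pushing the whole even power through: peel off one letter, $e_x^p e_y^q = e_x^{p-1} e_y^q e_x$ by centrality of $e_y^q$ \eqref{eq:squares_are_central_over_takz}, then apply the odd-exponent case to $e_x^{p-1}$ to get $e_{x\qu y}^q e_x^{p-1} e_x = e_{x\qu y}^q e_x^p$. This is precisely the paper's argument.

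The second gap is the one you flagged yourself: reducing mixed exponents to a common exponent by padding with central squares and then "removing" them. Removal is not an operation available in the monoid; from an equality of padded words you cannot conclude an equality of the unpadded ones without cancellativity of $\As^+(\R_{\infty})$ (nowhere established, and not obvious for this non-involutive solution), or without the heavier letter-tracking refinement that you only sketch. The paper avoids both problems at once by proving the lifted block relation $e_x^p e_y^q = e_{x\qu y}^q e_x^p$ directly for $p$ odd with $q$ arbitrary, and for $p,q$ both even. Since all $k_i$ share one parity, every braiding move in the given rewriting of $w$ then lifts verbatim, with the two exponents simply riding along and being swapped — this is where the permutation $\sigma$ comes from — so no uniformisation, padding, or cancellation is ever needed.
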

This lemma remains valid for all involutive quandles.

\begin{proof}
    The rewriting procedure for $w$ consists in applying the braiding moves $e_x e_y = e_{x \qu y} e_x$. But we have an analogous relation for powers, $e_x^p e_y^q = e_{x \qu y}^q e_x^p$, in two situations:
    \begin{enumerate}
        \item for odd $p$, since
        \[e_x^p e_y = e_x e_y e_x^{p-1} = e_{x \qu y} e_x^p;\]
        we used that for odd $p$, $e_x^{p-1}$ is a product of squares, hence central by \eqref{eq:squares_are_central_over_takz};
        \item for even $p$ and $q$, since
        \[e_x^p e_y^q = e_x^{p-1} e_y^q e_x = e_{x \qu y}^q e_x^p;\]
        we first used the centrality of $e_y^q$ for even $q$, then the previous calculation for odd $p-1$. \qedhere
    \end{enumerate}
\end{proof}

We also need the following arithmetic lemma:

\begin{lem} \label{lem:triple_gcd_lemma}
    Let $a,b,c \in \Z$, with  $a \neq b$. Then there exists an $n \in \N$ such that 
    \[\gcd(a+nc,b+nc) = \gcd(a,b,c).\]
    Moreover, if $a$ and $b$ are of different parity, then there are both even and odd $n$ satisfying this condition.
\end{lem}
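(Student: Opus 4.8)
The plan is to translate the statement into a coprimality condition and then settle it prime by prime via the Chinese Remainder Theorem.

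First I would rewrite both sides. Setting $m := b - a \neq 0$, one has $\gcd(a+nc,\,b+nc) = \gcd\bigl(a+nc,\,(b+nc)-(a+nc)\bigr) = \gcd(a+nc,\,m)$ for every $n$, and likewise $\gcd(a,b,c) = \gcd(a,c,m) =: g$. Since $g$ divides each of $a$, $c$, $m$, I would write $a = g a'$, $c = g c'$, $m = g m'$, so that $\gcd(a+nc,\,m) = g\cdot\gcd(a'+nc',\,m')$ while $\gcd(a',c',m') = 1$. Thus the lemma reduces to the following: given $a', c', m'$ with $\gcd(a',c',m') = 1$, find $n \in \N$ with $\gcd(a'+nc',\,m') = 1$.

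Next I would examine this condition modulo each prime $p \mid m'$. If $p \mid c'$, then $p \nmid a'$ by coprimality, so $a'+nc' \equiv a' \not\equiv 0 \pmod p$ for all $n$ and $p$ imposes no constraint. If $p \nmid c'$, then $a'+nc' \equiv 0 \pmod p$ holds for exactly one residue class of $n$ modulo $p$, namely $n \equiv -a'(c')^{-1} \pmod p$, and that single class must be avoided. Since each relevant prime forbids only one of its $p \geq 2$ residue classes, the Chinese Remainder Theorem yields a residue class of $n$ modulo the squarefree integer $N := \prod_{p \mid m',\, p \nmid c'} p$ all of whose members satisfy $\gcd(a'+nc',\,m') = 1$; in particular there are infinitely many such $n$, hence a positive one, which proves the first assertion.

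For the parity refinement I would observe that if $a$ and $b$ have opposite parity then $m = b - a$ is odd, hence $m'$ is odd, hence $p = 2$ never occurs among the constrained primes and $N$ is odd. The set of admissible $n$ is then a union of residue classes modulo an odd number, so it contains infinitely many even and infinitely many odd positive integers, which gives the second assertion. There is no serious obstacle here; the only points to watch are the bookkeeping in the reduction and the degenerate cases (such as $N = 1$, i.e.\ $m' = \pm 1$, or $a = c = 0$), all of which are immediate once one has passed to the coprime situation.
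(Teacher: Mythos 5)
Your proposal is correct and follows essentially the same route as the paper: reduce to the coprime case by dividing out $\gcd(a,b,c)$, replace $\gcd(a+nc,b+nc)$ by $\gcd(a+nc,\,b-a)$, avoid the single bad residue class for each prime divisor of $b-a$ not dividing $c$ via the Chinese Remainder Theorem, and note that when $b-a$ is odd only odd primes are constrained, so the parity of $n$ can be prescribed. The only cosmetic difference is that you compute the forbidden class $n \equiv -a'(c')^{-1} \pmod p$ explicitly, whereas the paper simply picks $n_p \in \{0,1\}$.
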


\begin{proof}
    The division by $\gcd(a,b,c)$ reduces the lemma to the case $\gcd(a,b,c) = 1$. We will present $\gcd(a+nc,b+nc)$ in the equivalent form $\gcd(a+nc,b-a)$. A prime divisor $p$ of $b-a$ cannot divide $a$ and $c$ simultaneously, as otherwise it would be also a divisor of $b$ and hence of $\gcd(a,b,c) = 1$. Then there exists an $n_p \in \Z$ such that $p$ does not divide $a+n_pc$. Indeed, one can take $n_p=0$ if $p \nmid a$ and $n_p=1$ otherwise. By the Chinese remainder theorem, there exists an $n \in \N$ congruent to $n_p$ for all prime divisors $p$ of $b-a \neq 0$. Then no prime divisor of $b-a$ divides $a+nc$, and $\gcd(a+nc,b-a)=1$.  

    Now, if $a-b$ is odd, the above argument concerns only odd primes $p$. Therefore, when using the Chinese remainder theorem, one can impose any value of $n \mod 2$.
\end{proof}

\begin{proof}[Proof of \cref{T:ReflNormalForm}]
Take any presentation of a $w \in \As_{\geq 4}^+(\D_{\infty})$ with $\deg(w) = 1$, and start simplifying it. A repeated application of \cref{lem:three_element_lemma} to three-letter segments of $w$ yields a presentation 
\[w = e_{b_1}^2 \cdots e_{b_m}^2 h, \qquad b_j \in \Z, \ h \in \As^+(\D_{\infty}) \text{ of length } \leq 2.\] 

Lemmata \ref{lem:powers} and \ref{lem:three_element_lemma} allow us to transform this presentation into 
\begin{equation} \label{E:partialReduction}
 w = e_a^{2k}e_b^{2l} h, \qquad a,b \in \Z, \ k,l \in \N_0, \ h \in \As^+(\D_{\infty}) \text{ of length } \leq 2.   
\end{equation}

\textbf{Case 1:} a long tail $h=e_ce_d$. 
       
\begin{lem} \label{pro:three_letter_reduction}
    Any $g = e_a^2 e_b e_c \in \FullR$ can be rewritten in at least one of the following forms: 
    \[g = e_0^2e_1e_{1-\w(g)}, \text{ or } g = e_0e_1^2e_{-\w(g)}.\]
\end{lem}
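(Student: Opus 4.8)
Write $n = \w(g) = b-c$ and recall that $g\in\FullR$ means $\deg(g)=1$; read on the word $e_ae_ae_be_c$ this says $\gcd(a-b,n)=1$. The plan is first to dispatch the degenerate case $n=0$: then $\gcd(a-b,0)=|a-b|=1$, so $g=e_a^2e_b^2$ with $b=a\pm1$, and a short translation-invariant computation with the defining relations (the slide $e_x^2e_{x+1}^2=e_{x-1}^2e_x^2$) rewrites this as $e_0^2e_1^2=e_0^2e_1e_{1-n}$, the first claimed form. So assume $n\neq0$. The next step is one normalisation: apply \cref{lem:three_element_lemma} to the length-$3$ suffix $e_ae_be_c$ of $g$; it has weight $a-n$ and density $\gcd(a-b,n)=1$, hence equals $e_{a-n+1}^2e_{a-n}$, and pulling the new square to the front by centrality of squares \eqref{eq:squares_are_central_over_takz} puts $g$ into the shape $e_s^2e_pe_q$ with $p-q=n$ and $\gcd(s-p,n)=1$, concretely $(s,p,q)=(a-n+1,\,a,\,a-n)$. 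Observe that $\gcd(s-p,n)$ is just $\deg(g)=1$, hence unchanged by everything below.

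I would then work with three kinds of rewriting moves on words of the shape $e_s^2e_pe_q$ with $p-q=n$: (i) shifting the tail, replacing $e_pe_q$ by $e_{p+mn}e_{q+mn}$ for any $m\in\Z$, available by \eqref{E:length2}; (ii) \emph{contaminating} the central square past a tail letter, replacing $s$ by $2p-s$ or by $2q-s$, via \eqref{eq:squares_are_central_over_takz} together with $e_ye_x^2=e_{2y-x}^2e_y$; and (iii) treating one copy of $e_s$ as an ordinary letter (justified by \cref{lem:powers}) and re-applying \cref{lem:three_element_lemma} to the suffix $e_se_pe_q$, which replaces the triple $(s,p,q)$ by $(s-n+1,\,s,\,s-n)$. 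Each of these fixes $g$, its density $1$, its weight $n$, and its parity count $l_0(g)$ (equivalently $l_1(g)$), which is an invariant of $g$. The goal is now to drive $(s,p,q)$ to $(0,1,1-n)$ or to $(1,0,-n)$ — the only two triples of this shape carrying the invariant values of $g$. The mechanism: move (iii) forces $s-p=1-n$; a shift (i) and an optional contamination (ii) then bring $s-p$ into $\{-1,1\}$; and once $s-p=\pm1$, alternating (iii) with an optional preliminary contamination changes the tail-left $p$ by $\pm1$ while shifts (i) change it by multiples of $n$, so, because $\gcd(s-p,n)=1$, one can slide $p$ to $0$ or $1$ with $s-p$ kept in $\{-1,1\}$ — i.e.\ reach one of the two target forms. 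Which of the two occurs is forced by matching $l_0(g)$ with $l_0$ of the target: when $n$ is even, $a\not\equiv b\pmod2$ (since $\gcd(a-b,n)=1$) and the count selects $e_0^2e_1e_{1-n}$; when $n$ is odd the two targets have different parity counts and the parity of $a$ picks the matching one, the simultaneous control of residues and parities being exactly what \cref{lem:triple_gcd_lemma} furnishes.

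I expect the main obstacle to be precisely the bookkeeping of this last step: proving rigorously that the three moves generate enough transformations of $(s,p,q)$ to normalise $s-p$ to $\pm1$ and then move $p$ to $0$ or $1$, and that the parity constraint $l_0(g)=l_0(\text{target})$ indeed leaves a reachable target (necessarily unique). Both points hinge on the fullness hypothesis $\gcd(a-b,n)=1$, which is what prevents $p\bmod n$ — and, through \cref{lem:triple_gcd_lemma}, the parities — from obstructing the reduction; keeping track of how each move acts on the residues of $s$ and $p$ modulo $n$ and modulo $2$ simultaneously is the delicate part.
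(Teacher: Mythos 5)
Your setup coincides with the paper's: the initial normalisation via \cref{lem:three_element_lemma} giving the triple $(a-n+1,a,a-n)$ with $n=\w(g)$, and your moves (i)--(iii) are exactly the tools used there (tail braiding, contamination of the central square, re-application of the three-letter reduction). The gap is in the final reachability step, which you yourself flag as the delicate point: the increments ``$\pm1$ from (iii)'' and ``multiples of $n$ from (i)'' cannot be chosen independently while keeping $s-p\in\{-1,1\}$. After each application of (iii) one has $s-p=1-n$, and for $|n|\geq 3$ one checks that the only way to return $s-p$ to $\{-1,1\}$ using (i) and (ii) is to make the total tail shift an \emph{odd} multiple of $n$; with the single restoring shift you describe this is exactly $-n$, so one cycle changes $p$ by $\pm1-n$, not by an arbitrary element of $\pm1+n\Z$. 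Hence $p$ is strictly monotone as soon as $|n|\geq 3$, and the walk can never land on $\{0,1\}$ when it starts on the wrong side. Concretely, for $g=e_1^2e_2e_{-1}$ (full, weight $3$) your normalisation and first shift give the state $(-1,-2,-5)$, so $p_0=-2$ and every later value of $p$ is $\leq -2$, although the lemma does hold for this $g$ (it equals $e_0e_1^2e_{-3}$). The missing idea is precisely the paper's key step: between two applications of the three-letter reduction, push the square past the \emph{whole} two-letter tail and back, which by \cref{eq:squares_are_central_over_takz} and \cref{E:QuasiCommInAsR} translates its index by $2\w(g)$; combined with the reduction, this advances the index of the square by exactly $2$ per cycle (and by $-2$ for the reverse cycle), so it can be driven to $1$ or $2$, after which two explicit one-line rewritings produce the two stated forms, the parity of the original first letter deciding which one occurs.

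Two smaller inaccuracies. For even $\w(g)$ the parity count does not ``select'' $e_0^2e_1e_{1-\w(g)}$: both targets then have $l_0=l_1=2$, and in fact both presentations occur (this is the two-presentation case of \cref{T:ReflNormalForm}); only for odd $\w(g)$ do the counts separate the two forms. And \cref{lem:triple_gcd_lemma} plays no role here -- it is needed only later, in the length $\geq 4$ reduction of \cref{T:ReflNormalForm}. Your treatment of the degenerate case $\w(g)=0$ is correct but unnecessary once the iteration is set up as above.
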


\begin{proof}
The element $e_a e_b e_c$ is still full. By \cref{lem:three_element_lemma}, we can rewrite $g$ as     \begin{equation*}
        g= e_a \cdot e_ae_be_c \sim e_a e_d^2 e_{d-1} \sim e_d^2 e_ae_{d-1}.
    \end{equation*}
    Since rewriting leaves the weight unchanged, we have $\w(g) = a-(d-1)$, so $d = a-\w(g)+1$, and the above rewriting sequence becomes    
    \begin{equation}\label{E:RewritingInfDihedral}
        g \sim e_{a-\w(g)+1}^2 e_a e_{a-\w(g)}.
    \end{equation}
    Observe that the last word depends only on the first letter $e_a$ and the weight $\w(g)$ of the original word. By \cref{eq:squares_are_central_over_takz} and \cref{E:QuasiCommInAsR}, it can be rewritten as
\[e_{a-\w(g)+1}^2 e_a e_{a-\w(g)} \sim e_a e_{a-\w(g)} e_{a-\w(g)+1}^2 \sim e_{a-\w(g)+1 + 2(a-(a-\w(g)))}^2 e_a e_{a-\w(g)} = e_{a+\w(g)+1}^2 e_a e_{a-\w(g)}.\]
Now, applying \cref{E:RewritingInfDihedral} with the first letter $a+\w(g)+1$ instead of $a$ and the same weight $\w(g)$, we obtain
    \begin{equation*}
        g \sim e_{a+2}^2 e_{a+\w(g)+1} e_{a+1}.
    \end{equation*}
A repetition of this procedure or its inverse leaves us either with an expression of the form 
\[e_2^2 e_{\w(g)+1} e_{1} \sim e_2^2e_1 e_{1-\w(g)} \sim e_1e_0^2 e_{1-\w(g)} \sim e_0^2e_1 e_{1-\w(g)},\] 
or with an expression of the form
\[e_{1}^2e_{\w(g)}e_0 \sim e_{1}^2e_0e_{-\w(g)} \sim e_0 e_1^2e_{-\w(g)}. \qedhere\]
\end{proof}

We will now apply a similar reduction to elements of the form $g = e_a^{2k}e_b^{2l}e_ce_d$. 

 First assume $k,l > 0$ and $a \neq b$. \cref{lem:powers} allows us to apply to $e_a^{2k}e_b^{2l}$ the arguments used for the length $2$ case, and rewrite it as $e_{a+n(a-b)}^{2k'}e_{a+(n-1)(a-b)}^{2l'} = e_{a+n(a-b)}^{2k'}e_{b+n(a-b)}^{2l'}$ for any $n \in \Z$ (cf. \cref{E:length2}), with $\{k',l'\} = \{k,l\}$. Putting $a'=a+n(a-b)$ and $b'=b+n(a-b)$, we get
\begin{equation} \label{E:partialReductionCPreontinued}
w=e_{a'}^{2k'}e_{b'}^{2(l'-1)}e_{b'}^{2}e_ce_d.
\end{equation}
We would like to apply \cref{pro:three_letter_reduction} to the part $e_{b'}^{2}e_ce_d$ of this word. For this we have to ensure that it is full. We have
\[\deg(e_{b'}^{2}e_ce_d) = \gcd(b'-c,c-d) = \gcd(b'-c,b'-d) = \gcd(b+n(a-b)-c,b+n(a-b)-d).\]
By \cref{lem:triple_gcd_lemma}, for some $n$ the latter expression equals
\[\gcd(b-c,b-d,a-b) = \gcd(a-b,b-c,c-d) = \deg(e_a^{2k}e_b^{2l}e_ce_d) = 1.\]
We will work with such an $n$, and write again $a$,  $b$, $k$, $l$ instead of $a'$, $b'$, $k'$, $l'$  for simplicity. \cref{pro:three_letter_reduction} now yields
\begin{equation} \label{E:partialReductionContinued}
w=e_{a}^{2k}e_{b}^{2(l-1)}e_{0}^{2}e_1e_{c'} \text{ or } e_{a}^{2k}e_{b}^{2(l-1)}e_{0}e_1^{2}e_{d'},
\end{equation}
with $c'=1-\w(w)$ or $d'=-\w(w)$.

\begin{lem}\label{L:ActionOf01}
    For any $u \in \As^+(\D_{\infty})$ and $t \in \Z$, we have the equality
    \[e_t^2 u e_0e_1 = e_{\tau}^2 u e_0e_1,\]
    where $\tau=0$ for even $t$ and $\tau=1$ for odd $t$.
\end{lem}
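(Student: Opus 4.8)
The plan is to use the quasi-commutation formula \eqref{E:QuasiCommInAsR} to push the square $e_t^2$ past $u$ and then let the tail $e_0e_1$ absorb the parity information of the resulting square. First I would invoke the centrality of squares, equation \eqref{eq:squares_are_central_over_takz}, which already tells us $e_t^2$ is central in $\As^+(\D_\infty)$; hence $e_t^2 u e_0 e_1 = u e_t^2 e_0 e_1$. So the statement reduces to the three-letter-tail identity $e_t^2 e_0 e_1 = e_\tau^2 e_0 e_1$, which we can then put back in front of $u$ using centrality once more.

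The heart of the matter is therefore the computation $e_t^2 e_0 e_1 = e_\tau^2 e_0 e_1$. Here I would argue directly on the word $e_t^2 e_0 e_1$. Using \eqref{E:QuasiCommInAsR} with $w = e_0 e_1$ (so $l(w) = 2$ and $\w(w) = 0 - 1 = -1$), we get $e_0 e_1 e_a = e_{(-1)^2 a + 2(-1)} e_0 e_1 = e_{a-2} e_0 e_1$; applying this twice to commute $e_t^2$ to the right of $e_0e_1$ and then back gives $e_t^2 e_0 e_1 = e_0 e_1 e_{t}^2 = e_0 e_1 e_t e_t = e_{t-2} e_0 e_1 e_t = e_{t-2}^2 e_0 e_1$. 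Iterating, $e_t^2 e_0 e_1 = e_{t-2k}^2 e_0 e_1$ for every $k \in \Z$, so the index of the square can be shifted by any even integer while fixing the tail $e_0e_1$. Since $t$ and $\tau$ differ by an even integer (namely $\tau \in \{0,1\}$ is chosen with the same parity as $t$), this yields $e_t^2 e_0 e_1 = e_\tau^2 e_0 e_1$, and prefixing by $u$ via centrality completes the proof.

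The only subtlety — and the step I would state most carefully — is the bookkeeping in the repeated application of \eqref{E:QuasiCommInAsR}: one must track that passing a square $e_t^2$ across the fixed word $e_0 e_1$ (which has even length and weight $-1$) shifts the square's index by exactly $2\w(e_0e_1) = -2$, and that this is reversible, so that conjugating $e_t^2$ by $e_0e_1$ realises the full even-index orbit $\{e_{t-2k}^2 : k \in \Z\}$ in front of $e_0e_1$. Once this orbit computation is in hand, the general case is immediate from centrality of squares. I do not expect any genuine obstacle here; the lemma is a direct consequence of equations \eqref{eq:squares_are_central_over_takz} and \eqref{E:QuasiCommInAsR} applied to the specific tail $e_0 e_1$.
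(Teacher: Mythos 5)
Your proposal is correct and follows essentially the same route as the paper: both proofs combine the centrality of squares \eqref{eq:squares_are_central_over_takz} with the quasi-commutation relation \eqref{E:QuasiCommInAsR} applied to the tail $e_0e_1$ (of even length and weight $-1$) to shift the index of the square by $\pm 2$, and then iterate until $t$ is replaced by $\tau$. Your bookkeeping of the index shift and the reduction to the case of trivial $u$ are both accurate, so there is nothing to add.
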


\begin{proof}
    Using the centrality of the square $e_t^2$ and  the quasi-commutativity relation \eqref{E:QuasiCommInAsR}, we get   
    \[e_t^2 u e_0e_1 \sim u e_0e_1 e_t^2 \sim u e_{t+2(0-1)}^2 e_0e_1 = e_{t-2}^2 e_0e_1.\]
    Iterating this procedure or its inverse, one replaces $t$ with $\tau$.
\end{proof}

This lemma allows us to replace $a$ and $b$ in
\cref{E:partialReductionContinued} with $\alpha$ and $\beta \in \{0,1\}$. Moving the central elements $e_{\beta}^{2}$ and $e_{\alpha}^{2}$ inside this word, one gets the desired form \eqref{E:FinalReduction}.

If, say, $k=0$ and $l>0$, then in $w=e_{b}^{2(l-1)}e_{b}^{2}e_ce_d$ we directly have $\deg(e_{b}^{2}e_ce_d) = \deg(e_{b}^{2l}e_ce_d) = 1$, and the above arguments apply. 

The case $k=l=0$ is impossible since we are in length ${\geq 4}$.

If $a=b$, then $e_a^{2k}e_b^{2l} =e_b^{2(k+l)}$, and we are in the previous situation. 

\textbf{Case 2:} an empty tail $h=1$. If, say, $l>0$, then the presentation $w= e_a^{2k}e_b^{2(l-1)}e_be_b$ brings us to the previous case. Alternatively, this case can be dealt with using \cref{lem:powers}.

\textbf{Case 3:} a $1$-letter tail $h=e_c$. In length $5$, we have
\[w=e_a^2e_b^2e_c = e_a \cdot e_ae_b^2e_c,\]
with possibly equal $a$ and $b$. The element $e_ae_b^2e_c$ is still full, thus \cref{pro:three_letter_reduction} yields
\[w \sim e_ae_0^2e_1e_{c'} \text{ or } e_ae_0e_1^2e_{d}.\]

In the first case, we once again apply \cref{pro:three_letter_reduction}, together with the representation \eqref{E:partialReduction}, to the underlined subword to get 
\[w \sim e_0 \underline{e_{-a} e_0e_1e_{c'}} \sim e_0 e_0^ke_1^le_{c''} = e_0^{k+1}e_1^le_{c''}, \qquad k,l >0.\]
Note that the element $e_{-a} e_0e_1e_{c'}$ is indeed full because of the index difference $0-1=-1$.

The second case is similar:
\[w \sim e_ae_0e_1e_1e_{d} \sim e_ae_0e_1e_{-d+2} e_{1} \sim e_0^ke_1^le_{d'}e_{1} \sim e_0^ke_1^{l+1}e_{2-d'}, \qquad k,l >0.\]

In greater length, we have
\[w=e_a^{2k}e_b^{2l}e_c \sim e_a^{2(k-1)}e_b^{2(l-1)}e_a^2e_b^2e_c \sim e_a^{2(k-1)}e_b^{2(l-1)} e_0^{k'}e_1^{l'}e_{c'}, \qquad k',l' >0.\]
We assumed $k,l>0$ (otherwise a single power, say $e_a^{2k}$, should be split into two powers), and used the fullness of the element $e_a^2e_b^2e_c$. \cref{L:ActionOf01} allows one to replace $a$ and $b$ with $\alpha$ and $\beta \in \{0,1\}$, and the centrality of squares to rearrange the result into the desired form.

It remains to discuss whether the presentation \eqref{E:FinalReduction} is unique. For a given parity of $c$, the powers $k$ and $l$ are uniquely determined from $l_0(w)$ and $l_1(w)$; the value of $c$ can then be computed from the weight $\w(w)$. Thus $w$ admits at most $2$ desired presentations, one for each parity of $c$. The assumption $l_0(w)=1$ forces $k$ to be $1$ and $c$ to be odd; thus $w$ admits only $1$ desired presentation. The case $l_1(w)=1$ is analogous.

We will now show that elements $w$ with $l_0(w)>1$ and $l_1(w)>1$ admit $2$ desired presentations. In length $4$, these two constraints imply $l_0(w)=l_1(w)=2$. Then $\w(g)$ is even, and in the proof of \ref{pro:three_letter_reduction}, \cref{E:RewritingInfDihedral} allows us to change the parity of the first letter. Then the algorithm from the proof can present $g$ both as $e_0^2e_1e_{1-\w(g)}$ and $e_0e_1^2e_{-\w(g)}$. 

In general even length, let us go back to the presentation $w=e_a^{2k}e_b^{2l}e_ce_d$ that we have obtained before. If $c$ and $d$ are of the same parity, then for the element $e_{b'}^{2}e_ce_d$ in \cref{E:partialReductionCPreontinued} to be full, $b'$ needs to be of the opposite parity. Hence $l_0(e_{b'}^2e_ce_d)=l_1(e_{b'}^2e_ce_d)=2$, and the length $4$ case yields for $w$ two presentations of the form \eqref{E:FinalReduction}, with last letters of different parity. If $c$ and $d$ are of different parity, then so are $a$ and $b$ (otherwise $l_0(w)$ or $l_1(w)= 1$). Then \cref{lem:triple_gcd_lemma} produces an odd and an even $n$ satisfying $\gcd(b+n(a-b)-c,b+n(a-b)-d) = \gcd(b-c,b-d,a-b)$. Since $a-b$ is odd, one can thus force any desired parity of $b'=b+n(a-b)$. Our rewriting algorithm will then produce two presentations of the form \eqref{E:FinalReduction}, with the last letters of different parity.

In the odd length, the indices $a$ and $b$ in the presentation $w=e_a^{2k}e_b^{2l}e_c$ are of different parity (otherwise $l_0(w)$ or $l_1(w) \leq 1$). Since the central elements $e_a^{2k}$ and $e_b^{2l}$ can be interchanged in this expression, one may assume that $a$ and $c$ are of the same parity. But then $l_0(e_ae_b^2e_c) = l_1(e_ae_b^2e_c) = 2$, and once again the length $4$ case yields for $w$ two presentations of the form \eqref{E:FinalReduction}, with the last letters of different parity.
\end{proof}

\cref{T:FullInfariantsStrMonRefl} now follows easily:

\begin{proof}[Proof of \cref{T:FullInfariantsStrMonRefl}]
Since the length $\leq 3$ case was settled in \cref{lem:three_element_lemma} and preceding paragraphs, it remains to show how to reconstruct $w \in \As^+_{\geq 4}(\R_{\infty})$ from the evaluations of the five invariants $\deg$, $\alpha$, $\w$, $\overline{l}_0$, and $\overline{l}_1$. First, for the essentialisation $\overline{w}$, we know the values $l_0(\overline{w}) = \overline{l}_0(w)$, $l_1(\overline{w}) = \overline{l}_1(w)$, and 
\[\w(\overline{w}) = \begin{cases} \left( \w(w)-\alpha(w) \right) / \deg(w) &\text{ in odd length},\\\w(w) / \deg(w) &\text{ in even length}. \end{cases}\]
\cref{T:ReflNormalForm} then allows one to reconstruct $\overline{w}$, which, together with the values $\w(w)$ and $\alpha(w)$, gives back $w$.
\end{proof}

For full elements, these numeric invariants are sufficient not only to distinguish the elements of $\FullR$, but also to describe the algebraic structure of this semigroup:

\begin{thm}\label{T:StructureFRS}
 The following map is an injective morphism of semigroups:
 \begin{align*}
 \iota \colon \FullR &\to \Z \rtimes (\N \times \N),\\
 w &\mapsto (\w(w), l_0(w), l_1(w)). 
 \end{align*}
 Here $\N \times \N$ acts on $\Z$ via $(k,l) \cdot m  = (-1)^{k+l}m$. The image of this inclusion is
 \[\Im(\iota) = \{(m, k,l) : k,l \in \N, \ m \in \Z,\ l \equiv m \mod 2 \}.\]
\end{thm}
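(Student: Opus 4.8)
The plan is to verify that $\iota$ is a well-defined semigroup homomorphism, then to identify its image, and finally to deduce injectivity from \cref{T:FullInfariantsStrMonRefl}.

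\textbf{Well-definedness and the homomorphism property.} First I would note that $l_0$ and $l_1$ are already known to be monoid morphisms, so the only thing to check is compatibility of the assignment $w \mapsto (\w(w), l_0(w), l_1(w))$ with the multiplication in the semidirect product. Using the quasi-commutativity formula \eqref{E:QuasiCommInAsR}, for $v, w \in \As^+(\R_\infty)$ one has $\w(vw) = \w(v) + (-1)^{l(v)}\w(w)$, since writing $v = e_{a_1}\cdots e_{a_p}$ and $w = e_{b_1}\cdots e_{b_q}$, the alternating sum for $vw$ starts as the alternating sum for $v$ and then continues with signs flipped by $(-1)^p = (-1)^{l_0(v)+l_1(v)}$. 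This is exactly the rule for multiplying $(\w(v), l_0(v), l_1(v))$ and $(\w(w), l_0(w), l_1(w))$ in $\Z \rtimes (\N \times \N)$ with the stated action $(k,l)\cdot m = (-1)^{k+l}m$. Since $\FullR$ is closed under the monoid multiplication (shown in the lemma preceding \cref{T:FullInfariantsStrMonRefl}), the restriction $\iota$ is a well-defined semigroup morphism into $\Z \rtimes (\N \times \N)$; one should check the image actually lands in the subset with $k, l \geq 1$, which holds because a full element has length $\geq 2$ and density $1$, forcing at least one even-indexed and one odd-indexed letter (two consecutive letters $e_a e_b$ with $a - b$ odd are needed to get $\gcd = 1$, more carefully: the density being $1$ and finite means the indices are not all congruent mod $2$).

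\textbf{The image.} The containment $\Im(\iota) \subseteq \{(m,k,l) : k,l \geq 1,\ l \equiv m \bmod 2\}$ follows because $\w(w) = \sum (-1)^{i+1} a_i$ has the same parity as $\sum_i a_i$, which has the same parity as the number of odd indices, i.e. as $l_1(w)$. For the reverse inclusion I would exhibit explicit full elements realising each target triple: by \cref{T:ReflNormalForm}, the elements $e_0^k e_1^l e_c$ with $k, l \geq 1$ are full (their density is $\gcd(0 - 1, \ldots) = 1$) once the total length is $\geq 4$, and one computes directly $l_0(e_0^k e_1^l e_c) = k + [c \text{ even}]$, $l_1 = l + [c \text{ odd}]$, and $\w(e_0^k e_1^l e_c) = 0 \cdot (\text{something}) + 1\cdot(\text{something}) \pm c$ — more precisely the alternating sum evaluates to a value one can solve for $c$ given the desired weight once $k, l$ are fixed by the desired $l_0, l_1$. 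The low-length cases $l(w) \in \{2, 3\}$ (covered by \eqref{E:length2} and \cref{lem:three_element_lemma}) must be handled separately: there one checks that the triples realised by $e_a^2 e_{a-d}$ type and $e_{a+1}^2 e_a$ type elements, combined with the length-$\geq 4$ normal forms, exhaust all $(m, k, l)$ with $k + l \in \{2,3\}$, $k,l \geq 1$, $l \equiv m$.

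\textbf{Injectivity.} This is where \cref{T:FullInfariantsStrMonRefl} does the work: two elements of $\FullR \subseteq \As^+_{\geq 1}(\R_\infty)$ coincide iff they agree on $\deg, \alpha, \w, \overline{l}_0, \overline{l}_1$. For a full element $\deg(w) = 1$ and hence $\alpha(w) = 0$ (the minimal representative mod $1$), so these two invariants are constant on $\FullR$; moreover the essentialisation is the element itself, so $\overline{l}_0 = l_0$ and $\overline{l}_1 = l_1$. Therefore two full elements with the same $(\w, l_0, l_1)$ agree on all five invariants, hence are equal. The main obstacle I anticipate is the careful bookkeeping of the low-length ($\leq 3$) contributions to the image and confirming no triple with $k + l \leq 3$ is missed or doubly excluded; by contrast the homomorphism identity and the injectivity reduction are essentially immediate from the results already established.
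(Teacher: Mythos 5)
Your proposal follows essentially the same route as the paper: the homomorphism property via the computation $\w(vw)=\w(v)+(-1)^{l(v)}\w(w)$, injectivity by reduction to \cref{T:FullInfariantsStrMonRefl} (observing that on $\FullR$ the invariants $\deg$ and $\alpha$ are constant and the essentialisation is the identity, so $\overline{l}_0=l_0$, $\overline{l}_1=l_1$), the parity containment $\w(w)\equiv l_1(w)\bmod 2$, and surjectivity by exhibiting elements of the normal form $e_0^ke_1^le_c$. The paper is merely more explicit at the last step, giving a case table for the odd index $c$ when $l>1$ and a separate witness $e_0^{k-1}e_1e_{1+(-1)^km}$ when $l=1$, whereas you defer to ``one can solve for $c$''; your observation that a full element must contain letters of both parities (so $l_0,l_1\geq 1$) is a correct and worthwhile explicit check.

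One concrete caveat: the low-length bookkeeping you postpone cannot in fact be completed exactly as you promise. A triple $(m,1,1)$ forces length $2$, and a length-$2$ element $e_ae_b$ has density $|a-b|=|\w|$, so it is full only when $\w=\pm 1$; hence $(m,1,1)$ with $|m|\geq 3$ odd is \emph{not} in the image, and your claim that the length-$2$ and length-$3$ elements exhaust all admissible triples with $k+l\in\{2,3\}$ would fail at this point. You should not be penalised too heavily for this, since the statement itself overstates the image and the paper's own proof has the same blind spot (its witness for $(m,k,1)$ degenerates at $k=1$ to $e_1e_{1-m}$, which is not full for $|m|>1$); the correct image replaces the fibre over $(1,1)$ by $\{\pm 1\}$, in accordance with the $\Z_d^*\times\{(1,1)\}$ term appearing in the finite analogue, \cref{T:StructureFRSFinite}. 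Carrying out your low-length check honestly would have surfaced exactly this correction.
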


\begin{proof}
The map $\iota$ preserves products, since
\begin{align*}
\iota(v)\cdot \iota(w) &= (\w(v), l_0(v), l_1(v)) \cdot (\w(w), l_0(w), l_1(w)) \\
&= (\w(v) + (-1)^{l_0(v)+ l_1(v)} \w(w), l_0(v) + l_0(w), l_1(v)+l_1(w))\\
& = (\w(v) + (-1)^{l(v)} \w(w), l_0(vw), l_1(vw))= (\w(vw), l_0(vw), l_1(vw)) = \iota(vw).
\end{align*}
\cref{T:FullInfariantsStrMonRefl} guarantees its injectivity (recall that for a full $w$, the evaluations $\deg(w)=1$ and $\alpha(w)=0$ are irrelevant). Further, $\w(w)$ and $l_1(w)$ are of the same parity, since the $l_0(w)$ even-indexed letters in $w$ do not change the parity of the weight $\w(w)$, whereas the $l_1(w)$ odd-indexed letters do. Finally, take $k,l \in \N$ and $m \in \Z$ with $l \equiv m \mod 2$. If $l>1$, then
\[(m, k,l) = \iota(e_0^ke_1^{l-1}e_c), \qquad \text{ where } c= \begin{cases} 1-m & k, l \text{ even},\\1+m & l \text{ even}, k \text{ odd},\\m & k \text{ even}, l \text{ odd},\\-m & k, l \text{ odd}. \end{cases}
\]
Indeed, in all four cases $c$ is odd, implying the desired values of $l_0$ and $l_1$, and the weight of $e_0^ke_1^{l-1}e_c$ is $m$. In the same way,
\[(m, k,1) = \iota(e_0^{k-1}e_1e_{1+(-1)^km}).\qedhere\]
\end{proof}

As a consequence, the full part of the structure \textbf{monoid} of the infinite reflection solution injects into the structure \textbf{group} of this solution, exactly as it happened for transposition solutions: 

\begin{cor}\label{C:MonoidIntoGroup}
    The tautological map
    \begin{align*}
        \FullR &\to \As(\R_{\infty}),\\
        e_{a_1}\ldots e_{a_k} &\mapsto e_{a_1}\ldots e_{a_k}
    \end{align*}
    is an injective morphism of semigroups.
\end{cor}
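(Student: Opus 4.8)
The plan is to factor the tautological map through the injection $\iota$ of \cref{T:StructureFRS} and a corresponding injection of the structure group into a semidirect product, in exact analogy with the proof of \cref{C:MonoidIntoGroup}'s transposition-case counterpart. First I would observe that, just as $\As^+(\R_\infty)$ maps to $\Z \rtimes (\N \times \N)$, the structure group $\As(\R_\infty)$ admits an analogous homomorphism to $\Z \rtimes (\Z \times \Z)$ sending $e_a$ to $(a, \mathbf{1}_0)$ or $(a, \mathbf{1}_1)$ according to the parity of $a$, where $\Z \times \Z$ acts on $\Z$ via $(k,l) \cdot m = (-1)^{k+l} m$; the same computation that proves $\iota$ respects products in \cref{T:StructureFRS} shows this is well defined, and one checks (or cites \cite{LebedConj}) that it is injective with image $\{(m,k,l) : k, l \in \Z, \ l \equiv m \bmod 2\}$.

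Next I would note that the square $\N \times \N \hookrightarrow \Z \times \Z$ induces a semigroup embedding $\Z \rtimes (\N \times \N) \hookrightarrow \Z \rtimes (\Z \times \Z)$ (the action is compatible since it factors through the parity of $k+l$), and that under the two injections just described, the tautological map $\FullR \to \As(\R_\infty)$ becomes precisely the restriction of this standard inclusion: both $\iota$ and the group embedding record the weight $\w$ and the even/odd lengths $l_0, l_1$ on a word $e_{a_1}\cdots e_{a_k}$, and these are preserved verbatim when the word is reinterpreted in the group. Hence the tautological map is the composite of the injection $\iota$ (injective by \cref{T:StructureFRS}), the inclusion $\Z \rtimes (\N \times \N) \hookrightarrow \Z \rtimes (\Z \times \Z)$ (obviously injective), and the inverse of the group injection onto its image — so it is injective, and it is a semigroup morphism because each map in the composite is. This is the content of the corollary.

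The one point requiring a little care is the existence and injectivity of the auxiliary homomorphism $\As(\R_\infty) \to \Z \rtimes (\Z \times \Z)$: well-definedness is the same one-line parity check as before, but injectivity should either be quoted from \cite{LebedConj} (where structure groups of $\CBar$-presentations are analysed) or argued directly by exhibiting a section, e.g.\ reconstructing a reduced word $e_0^{k}e_1^{l}e_c$-type normal form in the group from the triple $(m,k,l)$. I expect this to be the main obstacle only in the bookkeeping sense — matching up exactly which element of $\Z \times \Z$ the generator $e_a$ goes to so that the two injections agree on the nose — rather than anything mathematically deep; alternatively, one can bypass the group injection entirely and argue more cheaply that the tautological map is injective because distinct elements of $\FullR$ are already distinguished by $\w$, $l_0$, $l_1$ (by \cref{T:FullInfariantsStrMonRefl}, since $\deg$ and $\alpha$ are trivial on $\FullR$), and these three quantities are manifestly preserved by, and invariant under, the group relations of $\As(\R_\infty)$ as well. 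That shortcut makes the proof a two-sentence affair and is probably what I would actually write.
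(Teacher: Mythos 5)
Your shortcut is exactly the paper's proof: the paper checks that the assignment $e_a \mapsto (a,1,0)$ for even $a$ and $e_a \mapsto (a,0,1)$ for odd $a$ extends to a group homomorphism $\As(\R_{\infty}) \to \Z \rtimes (\Z \times \Z)$, so that $\w$, $l_0$, $l_1$ become invariants of elements of the group (this is the precise justification needed for your phrase ``manifestly preserved by the group relations'', since equality in the group may pass through words containing inverse letters), and injectivity on $\FullR$ then follows from \cref{T:StructureFRS}. The injectivity of that auxiliary group homomorphism --- the one point you flag as the main obstacle in your first route --- is never needed, so the cheap two-sentence version you say you would actually write is both correct and precisely the paper's argument.
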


\begin{proof}
It suffices to show that the invariants $\w$, $l_0$ and $l_1$ extend to the group $\As(\R_{\infty})$. To do this, one easily checks that the assignment
 \begin{align*}
 \As(\R_{\infty}) &\to \Z \rtimes (\Z \times \Z),\\
 e_a &\mapsto (a, 1, 0) \text{ for even }a,\\ 
 e_a &\mapsto (a, 0, 1) \text{ for odd }a,\\ 
 \end{align*}
 extends to the entire group $\As(\R_{\infty})$, and that for a product of generators (without any inverses) this map is precisely $\w \times l_0 \times l_1$.
\end{proof}

The algebraic structure of the entire monoid $\As^+(\R_{\infty})$ is much more subtle. For instance, the density of a product depends not only on the density of each component, but also on their anchors:

\begin{lem}\label{L:DensityProduct}
    The density of the product of two elements $v,w \in \As_{\geq 1}^+(\R_{\infty})$ can be computed as follows:
    \begin{equation}\label{E:DensityProduct}
 \deg(vw) = \gcd \left(\deg(v), \deg(w), \alpha(v) - \alpha(w) \right).   
\end{equation}
\end{lem}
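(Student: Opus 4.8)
The plan is to compute $\deg(vw)$ directly from the definition as a $\gcd$ of consecutive index differences in a word representing $vw$, and then recognise that this $\gcd$ splits into the contribution of $v$, the contribution of $w$, and a single ``bridging'' difference. First I would fix reduced words $v = e_{a_1}\cdots e_{a_m}$ and $w = e_{b_1}\cdots e_{b_n}$ with $m,n \geq 1$, so that $vw = e_{a_1}\cdots e_{a_m}e_{b_1}\cdots e_{b_n}$ is a representing word. By the lemma defining $\deg$ (and the remark in its proof that one may take the $\gcd$ of \emph{all} differences $a_i - a_j$ rather than only consecutive ones), we have
\[
\deg(vw) = \gcd\big(\{a_i - a_j\}_{i<j},\ \{a_i - b_j\}_{i,j},\ \{b_i - b_j\}_{i<j}\big).
\]
The first block of differences has $\gcd$ equal to $\deg(v)$ and the third to $\deg(w)$; the only remaining work is to show the middle block contributes exactly $\alpha(v) - \alpha(w)$ modulo the lattice generated by $\deg(v)$ and $\deg(w)$.

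The key step is therefore the identity
\[
\gcd\big(\deg(v),\ \deg(w),\ \{a_i - b_j\}_{i,j}\big) = \gcd\big(\deg(v),\ \deg(w),\ \alpha(v) - \alpha(w)\big).
\]
For this I would use the characterisation, stated just before \cref{T:FullInfariantsStrMonRefl}, that $\alpha(v) + \deg(v)\Z$ is precisely the minimal coset containing all indices $a_1,\dots,a_m$; concretely, $a_i \equiv \alpha(v) \pmod{\deg(v)}$ for every $i$, and similarly $b_j \equiv \alpha(w) \pmod{\deg(w)}$. Hence modulo $D := \gcd(\deg(v),\deg(w))$ we have $a_i \equiv \alpha(v)$ and $b_j \equiv \alpha(w)$, so every difference $a_i - b_j$ is congruent to $\alpha(v) - \alpha(w)$ modulo $D$. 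This gives
\[
\gcd\big(D,\ \{a_i - b_j\}_{i,j}\big) = \gcd\big(D,\ \alpha(v) - \alpha(w)\big),
\]
since adding multiples of $D$ to a fixed integer does not change its $\gcd$ with $D$, and at least one such difference is realised. Combining with the first paragraph yields \eqref{E:DensityProduct}.

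I should handle the degenerate cases separately but they are easy: if $\deg(v) = 0$ then $v = e_a^m$ with $\alpha(v) = a$ and all $a_i = a$, so the formula is the special case where the first $\gcd$ entry drops out, and symmetrically for $\deg(w) = 0$; if both vanish, $vw = e_a^m e_b^n$ has $\deg(vw) = \gcd(a - b) = |a-b| = \gcd(\alpha(v) - \alpha(w))$ as required. The main (and essentially only) obstacle is making sure the ``minimal coset'' characterisation of the anchor is invoked correctly — in particular that $\deg$ and $\alpha$ really are well-defined on the \emph{element} $vw$ rather than the chosen word, which is exactly what the earlier lemmata on the well-definedness of $\deg$ and $\alpha$ guarantee; beyond that the argument is a short manipulation of $\gcd$'s and congruences.
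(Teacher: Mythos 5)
Your proof is correct and follows essentially the same route as the paper: write $vw$ as a concatenated word, split the $\gcd$ of differences into the contributions of $v$, of $w$, and the bridging difference(s), and reduce the latter to $\alpha(v)-\alpha(w)$ modulo $\gcd(\deg(v),\deg(w))$ using $a_i \equiv \alpha(v) \pmod{\deg(v)}$, $b_j \equiv \alpha(w) \pmod{\deg(w)}$. The paper keeps only consecutive differences plus the single bridge $a_n-b_1$, while you invoke all pairwise differences, but this is an immaterial variation of the same argument.
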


\begin{proof}
Putting $v=e_{a_1} e_{a_2} \cdots e_{a_n}$ and $w=e_{b_1} e_{b_2} \cdots e_{b_m}$, we get
\begin{align*}
    \deg(vw) &= \gcd(a_1-a_2,a_2-a_3,\ldots,a_{n-1}-a_n, a_n - b_1, b_1-b_2,\ldots,b_{m-1}-b_m)\\
    &= \gcd(\deg(v), a_n - b_1,\deg(w))= \gcd(\deg(v), \alpha(v) - \alpha(w),\deg(w)).
\end{align*}
Indeed, since $a_n = \alpha(v)+\deg(v)a'$ and $b_1 = \alpha(w)+ \deg(w)b'$ for some $a',b' \in \Z$, we have
\[a_n - b_1 \equiv \alpha(v) - \alpha(w) \mod  \gcd(\deg(v), \deg(w)).\qedhere\]
\end{proof}

\section{How structure monoids of reflection solutions grow} \label{S:MonoidsReflection}

In this section, we will present a normal form for the structure monoids of all finite reflection solutions $(\R_d, r_{\qu})$, inherited from the normal form for the infinite reflection solution $(\R_{\infty}, r)$ (\cref{T:ReflNormalForm}). This will allow us to compute the growth series of these monoids. Recall that we identify the set $\R_d$ with $\Z_d$; the solution $r_{\qu}$ then takes the form \eqref{E:ReflectionSol}. 

We will start by adapting the five invariants for $\As_{\geq 1}^+(\R_{\infty})$ to the finite setting. The proofs of the following results repeat verbatim those of the analogous statements from \cref{S:MonoidsInfiniteReflection} ; one simply turns all equalities into congruences modulo $d$.

\begin{lem}
    The map
    \begin{align*}
        \deg \colon \As^+_{\geq 1}(\R_{d}) &\to \Z, \\
e_{a_1} e_{a_2} \cdots e_{a_n} &\mapsto \gcd(d,a_1-a_2,a_2-a_3,\ldots,a_{n-1}-a_n), \ n \geq 2,\\
e_a &\mapsto d,
    \end{align*}
    is well defined.
\end{lem}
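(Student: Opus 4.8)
The plan is to mimic, essentially verbatim, the proof of the corresponding statement for $\R_{\infty}$ in \cref{S:MonoidsInfiniteReflection}, the only new point being the bookkeeping of indices modulo $d$. To a word $w = e_{a_1}e_{a_2}\cdots e_{a_n}$ (with $a_i \in \Z_d$) I would attach the subgroup $I_w = d\Z + \sum_{1 \leq i < j \leq n}(a_i - a_j)\Z \subseteq \Z$. The first thing to record is that $I_w$ is genuinely well defined: each difference $a_i - a_j$ is only an element of $\Z_d$, but the ambiguity in choosing an integer lift is a multiple of $d$, which already lies in $I_w$, so the subgroup does not depend on the lifts. Since $\Z$ is a PID, $I_w$ has a unique non-negative generator; using $a_i - a_j = \sum_{k=i}^{j-1}(a_k - a_{k+1})$ one sees that the subgroup generated by $d$ and the \emph{consecutive} differences coincides with $I_w$, so for $n \geq 2$ this generator is exactly $\gcd(d, a_1 - a_2, \ldots, a_{n-1} - a_n)$, while for $n = 1$ we have $I_w = d\Z$ with generator $d$. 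Thus the map in the statement is simply $w \mapsto (\text{non-negative generator of } I_w)$, and everything reduces to showing that $I_w$ depends only on the element of $\As^+(\R_d)$ that $w$ represents.

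Two words represent the same element of $\As^+(\R_d)$ if and only if they are linked by a finite chain of braiding moves, i.e. substitutions of a consecutive block $e_x e_y$ by $e_{-y+2x}e_x$ or the reverse substitution. Every such move preserves the length of the word, so the case $n = 1$ is vacuous and $\As^+_{\geq 2}(\R_d)$ is stable. Hence the statement reduces to: if $w = \cdots e_{a_{i-1}} e_x e_y e_{a_{i+2}} \cdots$ and $w' = \cdots e_{a_{i-1}} e_{-y+2x} e_x e_{a_{i+2}} \cdots$, then $I_w = I_{w'}$. Only the (at most) three consecutive differences straddling the modified positions change, and one checks, using $x - y = (-y+2x) - x$ together with $a - (-y+2x) = (a - x) - (x - y) = (a - y) - 2(x - y)$, that each new difference is a $\Z$-linear combination of old differences (and of $d$); this gives $I_{w'} \subseteq I_w$. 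The reverse braiding move, treated by the same identities, gives $I_w \subseteq I_{w'}$, hence $I_w = I_{w'}$. The boundary situations ($i = 1$, or $i = n-1$, or $n = 2$) involve strictly fewer differences and are covered by the very same identities.

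I do not expect a real obstacle here: the substance is just the elementary fact that the subgroup of $\Z$ generated by $d$ and all pairwise index differences is invariant under braiding moves. The one place where the finite case genuinely differs from the $\R_{\infty}$ case — and the point to state carefully — is that adjoining $d$ to the list of generators is precisely what makes this subgroup independent of the choice of integer representatives of the residues $a_i \in \Z_d$; once that is made explicit, the verification is word-for-word the argument of \cref{S:MonoidsInfiniteReflection} with equalities replaced by congruences modulo $d$.
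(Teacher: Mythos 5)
Your proof is correct and follows essentially the same route as the paper, which simply transports the $\R_{\infty}$ argument (replace the $\gcd$ by the $\gcd$ of all pairwise differences and check invariance under a braiding move via $x-y=(-y+2x)-x$ and $a-(-y+2x)=(a-y)-2(x-y)$) to the finite case by working modulo $d$. Your subgroup $I_w = d\Z + \sum_{i<j}(a_i-a_j)\Z$ is just a tidy way of recording that adjoining $d$ makes the construction independent of integer lifts, which is exactly the paper's ``turn equalities into congruences modulo $d$'' step made explicit.
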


Note that here it is convenient to impose $\deg(e_a)=d$ rather than $0$, which was our choice in the infinite case.

\begin{defn}
 The number $\deg(w) \in \Z$ is called the \emph{density} of $w \in \As^+_{\geq 1}(\R_d)$. The elements $w$ with $\deg(w)=1$ are called \emph{full}. The set of all such elements is called the \emph{full reflection semigroup of level $d$}, denoted by $\FullR_d$.
\end{defn}

We will soon see that $\FullR_d$ is indeed a sub-semigroup of $\As^+_{\geq 1}(\R_d)$.

\begin{lem}
For $w \in \As^+_{\geq 1}(\R_{d})$, let $\alpha(w) \in \N_0$ be the minimal representative of the index $a$ of any of its letters $e_a$ modulo $\deg(w)$. This yields a well-defined map
    \begin{align*}
        \alpha \colon \As^+_{\geq 1}(\R_{d}) &\to \N_0.
    \end{align*}
\end{lem}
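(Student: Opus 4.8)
The plan is to mimic exactly the argument used for $\As^+_{\geq 1}(\R_{\infty})$, replacing equalities of integers by congruences modulo $d$ throughout. Concretely, what must be checked is that $\alpha$ does not depend on the chosen representative word for $w$, i.e. that it is stable under braiding moves. The essential observation, already recorded in the lemma defining $\deg$ on $\As^+_{\geq 1}(\R_d)$, is that all letters $e_{a_i}$ occurring in \emph{any} word representing a fixed $w$ have indices lying in a single coset of $\deg(w)\cdot\Z$ in $\Z$ (more precisely, of $\gcd(d,\deg(w))\cdot \Z = \deg(w)\cdot\Z$, since $\deg(w)\mid d$). This is because each difference $a_i-a_{i+1}$ is divisible by $\deg(w)$ by definition, so $a_i\equiv a_j\pmod{\deg(w)}$ for all $i,j$; and braiding moves only replace a pair $(x,y)$ by $(-y+2x,x)$, where $-y+2x\equiv y\equiv x\pmod{\deg(w)}$, so the common residue class is preserved.

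First I would state precisely that, fixing $w\in\As^+_{\geq 1}(\R_d)$ and $d_0=\deg(w)$ (which is well defined by the previous lemma, and divides $d$), all indices appearing in all representative words of $w$ lie in one fixed coset $\alpha_0+d_0\Z\subseteq\Z$. Then $\alpha(w)$ is by fiat the unique representative of that coset in $\{0,1,\dots,d_0-1\}$; since the coset does not depend on the chosen word, neither does $\alpha(w)$, so the map is well defined. For a length-one element $e_a$ we have $\deg(e_a)=d$, and $\alpha(e_a)$ is the minimal representative of $a$ modulo $d$, which is well defined since $e_a$ has a unique representative.

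The routine but necessary verification is the stability under a single braiding move: if a word $v=\cdots e_x e_y\cdots$ is rewritten as $v'=\cdots e_{-y+2x}e_x\cdots$, then the multiset of residues modulo $\deg(w)$ of the indices of $v'$ equals that of $v$, because $-y+2x\equiv y$ and the new copy $e_x$ has the same index $x$ as before. Hence the coset $\alpha_0+\deg(w)\Z$ is invariant, and so is the distinguished representative $\alpha(w)$. Since any two representative words of $w$ are connected by a finite chain of braiding moves, $\alpha(w)$ is independent of all choices.

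I do not expect a genuine obstacle here: the only subtlety is the bookkeeping that $\deg(w)\mid d$, so that ``modulo $\deg(w)$'' is compatible with the ambient ``modulo $d$'' identification $\R_d=\Z_d$, and that $\deg(w)$ itself was shown to be well defined in the preceding lemma. Everything else is the verbatim analogue of the infinite-case argument, with the single replacement of equalities by congruences modulo $d$, exactly as the surrounding text announces.
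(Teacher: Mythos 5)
Your argument is correct and is essentially the paper's own: the paper simply invokes the infinite case verbatim with equalities replaced by congruences modulo $d$, where the key point is exactly the one you verify — by the definition (and well-definedness) of $\deg(w)$, all indices occurring in any word representing $w$ lie in a single coset modulo $\deg(w)$, a coset preserved by braiding moves since $-y+2x \equiv y \pmod{\deg(w)}$. Your extra bookkeeping that $\deg(w)\mid d$ (so residues modulo $\deg(w)$ make sense for indices in $\Z_d$) and the remark on length-one elements with $\deg(e_a)=d$ are exactly the adjustments the paper intends.
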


\begin{defn}
 The number $\alpha(w)$ is called the \emph{anchor} of $w \in \As^+_{\geq 1}(\R_{d})$.   
\end{defn}

\begin{nota}
    The (possibly empty) set of all $w \in \As^+_{\geq 1}(\R_{d})$ with the same density $c$ and the same anchor $a$ is denoted by $\As^+_{c,a}(\R_{d})$.
\end{nota}

This set is non-empty if and only if $c$ is a divisor of $d$ and $a$ lies in $\{0,1,\ldots, c-1\}$.

\begin{pro}\label{P:ReductionToFull}
    For any $c \mid d$ and $a \in \{0,1,\ldots, c-1\}$, the set $\As^+_{c,a}(\R_{d})$ is a sub-semigroup of $\As^+_{\geq 1}(\R_{d})$ isomorphic to the full reflection semigroup $\FullR_{\frac{d}{c}}$:
    \[\As^+_{c,a}(\R_{d}) \cong \FullR_{\frac{d}{c}}.\]
\end{pro}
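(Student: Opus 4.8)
The plan is to exhibit the isomorphism explicitly via the essentialisation construction already used for $\R_\infty$, but now phrased modulo $d$. Write $e = d/c$. First I would define a map $\Phi \colon \As^+_{c,a}(\R_d) \to \As^+(\R_e)$ by taking any representative $e_{x_1}\cdots e_{x_n}$ of $w$ and sending it to $e_{(x_1-a)/c}\cdots e_{(x_n-a)/c}$, where the indices are now read modulo $e$. The key points to check are: (i) for $w \in \As^+_{c,a}(\R_d)$, every letter $x_i$ of every representative satisfies $x_i \equiv a \pmod c$ (this is exactly the content of the anchor lemma, which says all letters give the same residue modulo $\deg(w)=c$), so $(x_i-a)/c$ is a well-defined element of $\Z_e$; and (ii) the substitution $f\colon x \mapsto (x-a)/c$ intertwines the solution $r$ of \cref{E:ReflectionSol}, i.e. $r\circ(f\times f) = (f\times f)\circ r$, because $f$ is affine and $r$ is defined by the affine formula $x\qu y = -y+2x$, and affine maps of the form $x\mapsto \lambda x+\mu$ with $\lambda$ invertible always commute with such $r$ (here the reduction modulo $d$ on the source and modulo $e$ on the target is compatible since $c\cdot e = d$). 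These two facts guarantee that $\Phi$ is a well-defined semigroup morphism — well-defined because it carries defining relations to defining relations, and a morphism because it is induced by a map on generators compatible with the relations.

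Next I would compute the density of $\Phi(w)$: since $\deg$ is the $\gcd$ of $e$ together with all differences $x_i - x_j$ of letters, and each such difference is divided by $c$ under $f$, we get $\deg(\Phi(w)) = \gcd\bigl(e, (x_i-x_j)/c\bigr) = \frac{1}{c}\gcd(c\cdot e, x_i - x_j) = \frac{1}{c}\gcd(d, x_i-x_j) = \frac{1}{c}\deg(w) = \frac{1}{c}\cdot c = 1$. Hence $\Phi$ actually lands in $\FullR_e$. In the reverse direction, I would define $\Psi \colon \FullR_e \to \As^+_{\geq 1}(\R_d)$ by $e_{y_1}\cdots e_{y_m} \mapsto e_{cy_1+a}\cdots e_{cy_m+a}$, indices read modulo $d$; the same affine-intertwining argument shows this is a well-defined semigroup morphism, and a density computation analogous to the one above (running it backwards) shows $\deg(\Psi(v)) = c$ while clearly every letter of $\Psi(v)$ is $\equiv a \pmod c$, so $\alpha(\Psi(v)) = a$ and $\Psi$ lands in $\As^+_{c,a}(\R_d)$. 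Finally $\Phi$ and $\Psi$ are mutually inverse because on representatives the maps $x\mapsto (x-a)/c$ and $y\mapsto cy+a$ are mutually inverse bijections $\{a, a+c, a+2c,\ldots\}/d\Z \leftrightarrow \Z_e$, and since both $\Phi$ and $\Psi$ are well-defined on equivalence classes, the composites are the identity. This also proves the parenthetical claim that $\FullR_d$ is a genuine subsemigroup, being the case $c=1$, $a=0$ where $\Phi = \Psi = \mathrm{id}$.

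The main obstacle I anticipate is the verification that $\Phi$ is \emph{well-defined on the monoid}, i.e. that two words representing the same element of $\As^+_{c,a}(\R_d)$ have images representing the same element of $\FullR_e$. The excerpt tells us this type of statement follows "verbatim" from the infinite case, where it is handled by observing that the substitution $f$ intertwines the solution $r$, so that each braiding move upstairs is carried to a braiding move downstairs. The subtle point is the interaction of the two moduli: a braiding move in $\As^+(\R_d)$ involves the operation $x\qu y = -y+2x \bmod d$, and I must check that applying $f$ (and reducing mod $e$) to both sides yields a valid braiding move in $\As^+(\R_e)$ — this works precisely because $c$ divides $d$, so reduction mod $d$ followed by $f$ equals $f$ followed by reduction mod $e$ on the relevant affine expressions. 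Once this compatibility of reductions is pinned down, everything else is the routine affine bookkeeping sketched above, and one may legitimately cite the proofs of \cref{S:MonoidsInfiniteReflection} rather than repeat them.
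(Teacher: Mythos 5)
Your construction of the isomorphism is essentially the paper's own: the affine bijection $x \mapsto (x-a)/c$ between $\{x \in \Z_d : x \equiv a \bmod c\}$ and $\Z_{d/c}$ intertwines the solution \eqref{E:ReflectionSol} (this is exactly why the induced map on words respects the braiding moves, with the two moduli compatible since $c\cdot\frac dc = d$), the density computation shows the image lands in $\FullR_{d/c}$, and your $\Psi$ is precisely the paper's explicit preimage $e_{a_1c+a}\cdots e_{a_nc+a}$. That half of the argument is fine.

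The gap concerns the other half of the statement: that $\As^+_{c,a}(\R_{d})$ is closed under multiplication (and, for the codomain, that $\FullR_{d/c}$ is one too). You call $\Phi$ and $\Psi$ semigroup morphisms, but this presupposes that for $v,w \in \As^+_{c,a}(\R_{d})$ the product $vw$ again has density exactly $c$ and anchor $a$; this is not automatic, since knowing that all letters of $vw$ are $\equiv a \pmod c$ only gives $c \mid \deg(vw)$, and one must rule out the density jumping to a larger divisor of $d$. Your only remark in this direction — that the subsemigroup claim ``is the case $c=1$, $a=0$ where $\Phi=\Psi=\mathrm{id}$'' — is circular (an identity map proves nothing about closure) and would in any case only concern $\FullR_d$, not general $(c,a)$. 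The missing step is short and is exactly how the paper opens its proof: the product formula for densities (\cref{L:DensityProduct}, equation \eqref{E:DensityProduct}, still valid modulo $d$) gives $\deg(vw)=\gcd(c,c,a-a)=c$; equivalently, $\deg(vw)$ divides $\deg(v)=c$ because the gcd defining $\deg(vw)$ runs over a superset of the differences defining $\deg(v)$, while $c \mid \deg(vw)$ since all letters are congruent to $a$ modulo $c$; and then $\alpha(vw)=a$, so $vw \in \As^+_{c,a}(\R_{d})$. With that paragraph added, your proof coincides with the paper's.
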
 

\begin{proof}
Relation \eqref{E:DensityProduct} for densities is still valid in $\As^+_{\geq 1}(\R_{d})$. For $v$ and $w$ sharing the same density $c$ and the same anchor $a$, it yields
\[\deg(vw) = \gcd \left(c, c, a-a \right) = c.\]   
Also, the indices of all letters in any expression for $vw$ are congruent to $a \mod c$. Hence $vw \in \As^+_{c,a}(\R_{d})$, and the latter is a sub-semigroup.

Next, consider the arithmetic bijection
\begin{align*}
f \colon \{ x \in \Z_d : x \equiv a \mod c \} &\overset{\sim}{\to} \Z_{\frac{d}{c}},\\ 
    x &\mapsto \frac{x-a}{c}.
\end{align*}
Like any affine function, it intertwines the solution $r$ from \cref{E:ReflectionSol}, and thus induces a semigroup injection $\overline{f} \colon \As^+_{c,a}(\R_{d}) \hookrightarrow \As^+_{\geq 1}(R_{\frac{d}{c}})$. Moreover, the condition $\deg(w)=c$ implies $\deg(\overline{f}(w))=1$, thus $\overline{f}$ actually injects $\As^+_{c,a}(\R_{d})$ into $\FullR_{\frac{d}{c}}$. To see the surjectivity, for any $u = e_{a_1} \cdots e_{a_n} \in \FullR_{\frac{d}{c}}$, consider the well-defined element $u' = e_{a_1c+a} \cdots e_{a_nc+a} \in \As^+(\R_{d})$. Its degree is $c \cdot \deg(u) = c$, and its anchor is $a$; hence $u' \in \As^+_{c,a}(\R_{d})$. By construction, $\overline{f}(u')=u$, thus $u'$ is a preimage of $u$. Hence $\overline{f}$ yields the desired semigroup isomorphism.
\end{proof}

Thus the semigroup $\As^+_{\geq 1}(\R_{d})$ splits into a disjoint union of sub-semigroups. For each divisor $c$ of $d$, there are exactly $c$ sub-semigroups isomorphic to $\FullR_{\frac{d}{c}}$. The computation of the growth series of the structure monoids $\As^+(\R_{d})$ is thus reduced to that of its full components. Note that as an abuse of notation, we write
\[
\GS_{\FullR_d}(t) = \sum_{g \in \FullR_d} t^{l(g)},
\]
which is not a growth series but a restricted growth series.

\begin{cor}
For any integer $d \geq 2$, the growth series of the monoids $\As^+(\R_{d})$ and $\FullR_{c}$ are related as follows:
    \begin{equation}\label{E:GrowthReflMonoidReducedToFull}
        \GS_{\As^+(\R_d)} (t) = 1 + \sum_{c|d} \frac{d}{c} \cdot \GS_{\FullR_c}(t).
    \end{equation}
\end{cor}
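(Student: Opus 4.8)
The corollary is an immediate bookkeeping consequence of \cref{P:ReductionToFull} once one observes that the length function $l$ is preserved under the semigroup isomorphisms involved. The plan is as follows. First I would decompose the monoid $\As^+(\R_d)$ as the disjoint union of the trivial element $1$ (of length $0$) together with the set $\As^+_{\geq 1}(\R_d)$ of elements of length $\geq 1$. The latter, as noted in the paragraph following the proof of \cref{P:ReductionToFull}, splits as a disjoint union $\bigsqcup_{c \mid d} \bigsqcup_{a=0}^{c-1} \As^+_{c,a}(\R_d)$ over divisors $c$ of $d$ and anchors $a \in \{0, 1, \ldots, c-1\}$. Summing $t^{l(g)}$ over each piece and over all pieces, one gets
\[
\GS_{\As^+(\R_d)}(t) = 1 + \sum_{c \mid d} \sum_{a=0}^{c-1} \sum_{g \in \As^+_{c,a}(\R_d)} t^{l(g)}.
\]

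The second step is to identify each inner sum with $\GS_{\FullR_{d/c}}(t)$. By \cref{P:ReductionToFull} there is a semigroup isomorphism $\As^+_{c,a}(\R_d) \cong \FullR_{d/c}$, realised by the map $\overline{f}$ induced by $x \mapsto (x-a)/c$. I would check that this bijection preserves the generator-length $l$: the map $\overline{f}$ (and its inverse $u = e_{a_1}\cdots e_{a_n} \mapsto u' = e_{a_1 c + a}\cdots e_{a_n c + a}$) sends an $n$-letter word to an $n$-letter word, so $l(w) = l(\overline{f}(w))$ for any representative word, and since $l$ is the minimal word length this equality passes to the elements themselves. Hence $\sum_{g \in \As^+_{c,a}(\R_d)} t^{l(g)} = \GS_{\FullR_{d/c}}(t)$, independently of $a$.

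The third step is purely arithmetic: for a fixed divisor $c \mid d$ there are exactly $c$ choices of the anchor $a \in \{0, 1, \ldots, c-1\}$, so the sum over $a$ contributes a factor $c$, giving $\sum_{c \mid d} c \cdot \GS_{\FullR_{d/c}}(t)$. Finally I would reindex the sum by replacing the summation variable $c$ with $d/c$ (which is a bijection of the set of divisors of $d$), turning $c \cdot \GS_{\FullR_{d/c}}(t)$ into $\frac{d}{c} \cdot \GS_{\FullR_{c}}(t)$ and yielding exactly \eqref{E:GrowthReflMonoidReducedToFull}.

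There is no real obstacle here; the only point requiring a word of justification is the length-preservation of $\overline{f}$, which is immediate from the explicit description of $\overline{f}$ and its inverse as letter-by-letter substitutions. The statement is genuinely a corollary, and the proof is a short combination of \cref{P:ReductionToFull} with the divisor reindexing.
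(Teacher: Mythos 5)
Your proposal is correct and follows essentially the same route as the paper: it is the immediate consequence of \cref{P:ReductionToFull} together with the observation that for each divisor $c$ of $d$ there are exactly $c$ non-empty pieces $\As^+_{c,a}(\R_d)$, each isomorphic to $\FullR_{d/c}$ via a length-preserving (letter-by-letter) substitution, followed by the reindexing $c \leftrightarrow d/c$. Your explicit remark that the isomorphism preserves the length function is the only point the paper leaves implicit, and it is handled correctly.
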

Note that we exchanged the roles of $c$ and $\frac{d}{c}$ from the preceding arguments. 

We will thus focus on the full semigroups $\FullR_{d}$. Their structure will be entirely captured by the weight and the length functions, the latter being split into the odd and the even parts for even $d$. 

\begin{lem}
    For any integer $d \geq 2$, the assignment
    \begin{align*}
        l \colon \As^+(\R_{d}) &\to \N_0, \\
e_{a}  &\mapsto 1
    \end{align*}
    uniquely extends to a well-defined semigroup morphism. For even $d$, is can be refined into another semigroup morphism by imposing    
    \begin{align*}
        l_0 \times l_1 \colon \As^+(\R_d) &\to \N_0 \times \N_0, \\
e_{a}  &\mapsto \begin{cases} (1,0) & \text{ if } a \text{ is even},\\
    (0,1) & \text{ if } a \text{ is odd}.
\end{cases}
    \end{align*}
    For even $d$, we consider an element in $\Z_d$ to be even resp. odd if any of its representatives in $\Z$ is even resp. odd. 
\end{lem}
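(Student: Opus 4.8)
The plan is to verify directly that each of the two proposed assignments is compatible with the defining relations of the structure monoid $\As^+(\R_d)$, and then invoke the universal property of a monoid presentation: a map defined on generators extends, uniquely, to a monoid morphism if and only if the images of the generators satisfy the defining relations. Recalling that $\R_d$ is identified with $\Z_d$ equipped with $r_{\qu}(x,y) = (-y+2x, x)$, these relations take the form $e_x e_y = e_{-y+2x}\, e_x$ for all $x,y \in \Z_d$.

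For the length function $l$, both sides of $e_x e_y = e_{-y+2x}\, e_x$ are products of exactly two generators, hence are both sent to $1+1 = 2 \in \N_0$. So the relations are respected, and $l$ extends uniquely to a monoid morphism $\As^+(\R_d) \to \N_0$; this argument is valid for every $d \geq 2$.

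For the refined map $l_0 \times l_1$ with $d$ even, first I would note that the notion of parity descends from $\Z$ to $\Z_d$ precisely because $d$ is even: the reduction $x \mapsto x \bmod 2$ is then a well-defined homomorphism $\Z_d \to \Z_2$ (this is the one place where evenness of $d$ is genuinely used). Granting this, it remains to observe that $-y + 2x \equiv -y \equiv y \pmod 2$, so the generator $e_{-y+2x}$ appearing on the right-hand side of $e_x e_y = e_{-y+2x}\, e_x$ has the same parity of index as the generator $e_y$ on the left-hand side. Consequently both sides of the relation contribute the same pair $(\#\{\text{even-indexed letters}\},\ \#\{\text{odd-indexed letters}\})$ to $\N_0 \times \N_0$, so $l_0 \times l_1$ respects all defining relations and extends uniquely to a monoid morphism. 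One sees immediately that $l = l_0 + l_1$ wherever the latter are defined.

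I do not expect any serious obstacle; the entire content is a compatibility check with the defining relations. The only pitfall worth flagging is that parity is \emph{not} well-defined on $\Z_d$ for odd $d$ --- there $0$ and $d$ would be simultaneously even and odd --- which is exactly why the refined morphism $l_0 \times l_1$ is asserted only in the even case.
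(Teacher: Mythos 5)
Your proof is correct and matches the paper's approach: the paper proves the analogous lemma for $\As^+(\R_{\infty})$ by checking compatibility with the defining relations via the observation that $y$ and $-y+2x$ have the same parity, and states that the finite case follows verbatim with congruences modulo $d$. Your additional remarks (the triviality of the length case and the fact that parity on $\Z_d$ only makes sense for even $d$) are exactly the implicit points in the paper's treatment.
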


\begin{defn}
 The numbers $l(w)$, $l_0(w)$, and $l_1(w)$ are called the \emph{length}, the \emph{even length}, and the \emph{odd length} of $w$ respectively.   
\end{defn}

\begin{lem}
    The map
    \begin{align*}
        \w \colon \As^+(\R_{d}) &\to \Z_d, \\
e_{a_1} e_{a_2} \cdots e_{a_n} &\mapsto a_1 - a_2 + \cdots +(-1)^{n+1} a_n,\\
1 &\mapsto 0
    \end{align*}
    is well defined.
\end{lem}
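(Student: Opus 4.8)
The plan is to follow verbatim the strategy used for the analogous weight map on $\As^+(\R_{\infty})$, replacing equalities in $\Z$ by congruences modulo $d$. First, I would note that on the \emph{free} monoid on the symbols $\{e_a : a \in \Z_d\}$ the assignment
\[
e_{a_1} e_{a_2} \cdots e_{a_n} \longmapsto \sum_{i=1}^n (-1)^{i+1} a_i \in \Z_d, \qquad 1 \mapsto 0,
\]
is visibly well defined, since each $a_i$ is a genuine element of $\Z_d$ and addition in $\Z_d$ makes sense. Hence it suffices to show that this map descends to the quotient monoid $\As^+(\R_d)$, i.e. that it is invariant under a single braiding move and its inverse.

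A braiding move replaces a two-letter subword $e_a e_b$ of a word by $e_{-b+2a}\, e_a$, leaving every other letter — and in particular its position, hence the sign $(-1)^{i+1}$ attached to it — unchanged. If the replaced subword occupies positions $i$ and $i+1$, then its contribution to the alternating sum is $(-1)^{i+1}(a-b)$ before the move, and $(-1)^{i+1}\bigl((-b+2a)-a\bigr) = (-1)^{i+1}(a-b)$ afterwards; these agree already in $\Z$, a fortiori in $\Z_d$. Since any two words representing the same element of $\As^+(\R_d)$ are connected by a finite sequence of such moves (and their inverses, to which the same computation applies), the alternating sum is constant on congruence classes, yielding a well-defined map $\w \colon \As^+(\R_d) \to \Z_d$.

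There is essentially no genuine obstacle here; the only point that deserves a word of care is that a braiding move substitutes a length-two subword by another length-two subword, so that the parities of the positions of the remaining letters, and hence their signs, are unaffected — this is precisely what makes the local computation above sufficient. The underlying identity is the same one, $x - y = (x \qu y) - x = (-y+2x) - x$, that handled the infinite-dihedral case, now read in $\Z_d$.
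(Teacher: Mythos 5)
Your proof is correct and follows essentially the same route as the paper: the paper proves the infinite-dihedral case via the identity $x-y=(-y+2x)-x$ and then states that the finite case is obtained by reading all equalities as congruences modulo $d$, which is exactly your local braiding-move computation carried out in $\Z_d$. The extra remark about the signs of the untouched letters being preserved is just a spelled-out version of the same argument, so no further comment is needed.
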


\begin{defn}
 The element $\w(w) \in \Z_d$ is called the \emph{weight} of $w$.   
\end{defn}

Now, we are ready to adapt the normal form from \cref{T:ReflNormalForm} to the finite setting:

\begin{thm}\label{T:ReflNormalFormFinite}
    Any full element $w \in \As^+_{\geq 4}(\R_{d})$ can be written as
\begin{equation}\label{E:FinalReductionFinite}
    w=e_0^ke_1^le_c, \qquad \text{ where } k,l > 0, \ c \in \Z_d.
\end{equation}
For even $d$, this presentation is unique for $w$ with $l_0(w)$ or $l_1(w)$ equal to $1$, whereas other $w$ admit exactly two such presentations. 

For odd $d$ and any $w$, there is exactly one such presentation with $l=1$.
\end{thm}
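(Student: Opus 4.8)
The plan is to deduce \cref{T:ReflNormalFormFinite} from the infinite analogue \cref{T:ReflNormalForm} by reducing modulo $d$, being careful that the extra relations introduced by the quotient $\Z \twoheadrightarrow \Z_d$ could a priori both create new coincidences among normal forms and enable new rewritings. First I would establish \emph{existence} of the form \eqref{E:FinalReductionFinite}: given a full $w \in \As^+_{\geq 4}(\R_d)$, lift any representative word $e_{a_1}\cdots e_{a_n}$ to a word over $\Z$ (picking arbitrary integer representatives $a_i$), obtaining an element $\tilde w \in \As^+(\R_\infty)$; since the differences $a_i - a_{i+1}$ now only satisfy $\gcd(d, a_1-a_2,\dots) = 1$, the lift $\tilde w$ need \emph{not} be full, so I would first adjust the representatives (using that the density of $w$ is $1$ modulo $d$, hence $\gcd$ of the differences together with $d$ is $1$) to arrange $\deg(\tilde w) = 1$, or alternatively argue directly: all the rewriting steps used in the proof of \cref{T:ReflNormalForm} (\cref{lem:three_element_lemma}, \cref{lem:powers}, \cref{pro:three_letter_reduction}, \cref{L:ActionOf01}, and the fullness-preserving applications of \cref{lem:triple_gcd_lemma}) are applications of braiding moves, which remain valid modulo $d$; so the same algorithm applied directly in $\As^+(\R_d)$ produces the form $e_0^k e_1^l e_c$ with $k,l>0$ and $c\in\Z_d$. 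The one point needing care is \cref{lem:triple_gcd_lemma}, invoked to make a subword full: modulo $d$ the relevant statement is that for $a\not\equiv b$, some $n$ gives $\gcd(d, a+nc, b+nc) = \gcd(d,a,b,c)$, with parity control when $d$ is even — this follows from the same Chinese remainder argument applied to the odd prime divisors of $\gcd(d, b-a)$, and to the prime $2$ only when $d$ is even, which is exactly when we need no parity constraint anyway.

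Next I would treat \emph{uniqueness / multiplicity}. For even $d$, the argument mirrors the infinite case: for a fixed parity of $c$, the exponents $k,l$ are recovered from $l_0(w), l_1(w)$ (which are well defined since $d$ is even), and then $c$ is recovered from $\w(w) \in \Z_d$ via the explicit weight computation $\w(e_0^k e_1^l e_c)$; so there are at most two presentations, one per parity of $c$. That exactly one occurs when $l_0(w)=1$ or $l_1(w)=1$, and exactly two occur otherwise, is shown precisely as in the proof of \cref{T:ReflNormalForm}: when $l_0(w)>1$ and $l_1(w)>1$, the parity-switching freedom in \cref{E:RewritingInfDihedral} (respectively the existence of both an odd and an even $n$ in the mod-$d$ version of \cref{lem:triple_gcd_lemma}) lets the algorithm output last letters of either parity, and conversely the constraint $l_0(w)=1$ forces $k=1$ and $c$ odd. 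None of these steps uses infiniteness of the index set, only that reduction mod $d$ is a quandle morphism $\R_\infty \twoheadrightarrow \R_d$ and that $l_0, l_1, \w$ descend.

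For \emph{odd} $d$ the situation is genuinely different and is the place I expect the real work. Here $l_0$ and $l_1$ are not defined (every residue is both "even" and "odd" mod an odd number), but in compensation $2$ is invertible mod $d$, so the rewriting has more room. I would show that for odd $d$ one can always drive the form to $l=1$, i.e. $w = e_0^k e_1 e_c$: starting from $w = e_0^k e_1^l e_c$ with $l\geq 2$, use $e_1 e_1 = e_1 e_1$ is frozen but $e_1^{l}e_c$ is not — more usefully, apply the length-$4$ reduction (\cref{pro:three_letter_reduction} and \cref{L:ActionOf01}, whose mod-$d$ versions hold) to the suffix $e_1^2 e_c$-type subword, or invoke that, $2$ being a unit mod $d$, the square $e_1^2$ equals $e_1^2$ but $e_0^k e_1^2$ can absorb into $e_0$'s via the mod-$d$ analogue of \cref{L:ActionOf01} combined with $e_a^2 u e_0 e_1 = e_\tau^2 u e_0 e_1$ where now parity mod $d$ can be chosen freely. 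Then I would prove \emph{uniqueness} of the $l=1$ form: $k = l(w) - 2$ is determined by the length, and $c$ by $\w(w) = \w(e_0^k e_1 e_c) = -1 + c$ (reading the alternating sum with the appropriate sign depending on the parity of $k$), so $c$ is forced in $\Z_d$. The main obstacle is verifying that the reduction to $l=1$ always terminates and that, in doing so for odd $d$, one does not accidentally destroy fullness — here I would lean on \cref{P:ReductionToFull} and \cref{L:DensityProduct} to confirm that all intermediate subwords stay in $\FullR_d$, and on the fact that for odd $d$ every braiding move and every "contamination"-type move from \cref{S:MonoidsInfiniteReflection} has an exact mod-$d$ counterpart with the parity restrictions lifted.
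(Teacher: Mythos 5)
Your even-$d$ outline is essentially sound, but the odd-$d$ case --- which you correctly single out as the real work --- contains a genuine gap, and it is precisely where the paper takes a different (and much shorter) route. The paper never reruns the rewriting algorithm modulo $d$: it proves a $k$-variable strengthening of \cref{lem:triple_gcd_lemma} (\cref{lem:lifting_to_coprime_elements}) which, for odd $d$, produces integer lifts $a_i+m_id$ of the letters whose differences from $a_1$ have gcd $1$ \emph{and are all odd}; the lifted word is then a full element of $\As^+(\R_{\infty})$ with $l_1=1$, so \cref{T:ReflNormalForm} already forces its normal form to be $e_0^{k}e_1e_{c'}$, and reduction mod $d$ hands you the $l=1$ presentation with no further rewriting. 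Your plan instead first reaches some $e_0^ke_1^le_c$ in $\As^+(\R_d)$ and then tries to ``absorb'' $e_1^2$ into the $e_0$-block via a mod-$d$ analogue of \cref{L:ActionOf01} ``with the parity restrictions lifted''. That step is not justified: \cref{L:ActionOf01} requires the suffix $e_0e_1$, and in general a central square $e_t^2$ can only have its index shifted by $2\w(v)$ (or reflected through $\w(v)$) for suffixes $v$ of the remaining word, so the reachable set of indices is governed by those weights and need not be all of $\Z_d$ even when $d$ is odd --- for instance from $e_t^2e_0^ke_c$ the available translations are generated by $2c$ only, so one cannot in general move $t=1$ to $0$. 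You also give no termination or fullness-preservation argument for this reduction; \cref{P:ReductionToFull} and \cref{L:DensityProduct} do not supply one, since the issue is not whether the whole word stays full but whether the specific subwords you feed to \cref{pro:three_letter_reduction} are full modulo $d$.

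A secondary weak point is the blanket claim that ``the same algorithm applied directly in $\As^+(\R_d)$'' works because braiding moves descend. Braiding moves do descend, but the correctness of the algorithm in \cref{T:ReflNormalForm} repeatedly invokes fullness of intermediate subwords, which modulo $d$ becomes a gcd condition involving $d$: besides \cref{lem:triple_gcd_lemma} (the one point you flag), also \cref{lem:three_element_lemma} (its Euclidean-algorithm proof must now reach $\gcd(d,d_1,d_2)$, not $\gcd(d_1,d_2)$) and each application of \cref{pro:three_letter_reduction} need mod-$d$ re-justification. These are patchable, but the clean fix is the lifting route you mention only in passing: make the lift full by the multi-variable gcd lemma, note that for odd $d$ the same lemma lets you take all letters but the first even, apply the infinite theorem once, and reduce; then your uniqueness arguments (parity bookkeeping via $l_0,l_1,\w$ for even $d$; $k=l(w)-2$ and $c$ determined by $\w(w)$ for odd $d$) go through as you describe.
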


Before presenting the proof, we need a generalisation of \cref{lem:triple_gcd_lemma}:

\begin{lem} \label{lem:lifting_to_coprime_elements}
Take $k \geq 2$ integers $a_1, \ldots,a_k$, and some $d \in \Z$. Then there are integers $m_1,\ldots,m_k$ such that
    \[
    \gcd(a_1+m_1d, \ldots, a_k+m_kd) = \gcd(d,a_1,a_2,\ldots,a_k).
    \]
    If $d$ is odd, then these integers can be chosen such that $a_i + m_id$ is odd for all $1 \leq i \leq k$. 
\end{lem}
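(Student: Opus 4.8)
The plan is to reduce immediately to the coprime case and then argue prime-by-prime, exactly in the spirit of \cref{lem:triple_gcd_lemma}. First I would divide everything by $g := \gcd(d,a_1,\ldots,a_k)$, which does not affect whether the $a_i+m_id$ can be made odd when $d$ is odd (dividing an odd number by an odd number stays odd), so it suffices to treat the case $\gcd(d,a_1,\ldots,a_k)=1$ and show one can choose $m_1,\ldots,m_k$ with $\gcd(a_1+m_1d,\ldots,a_k+m_kd)=1$.

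For the coprime case, the key observation is that a prime $p$ divides $\gcd(a_1+m_1d,\ldots,a_k+m_kd)$ only if $p$ divides every $a_i+m_id$. I would distinguish two cases for a prime $p$. If $p \mid d$, then $a_i+m_id \equiv a_i \pmod p$ for all choices of $m_i$; since $\gcd(d,a_1,\ldots,a_k)=1$ and $p\mid d$, not all $a_i$ are divisible by $p$, so $p$ cannot divide the gcd no matter what we do — these primes take care of themselves. If $p \nmid d$, then $d$ is invertible mod $p$, so for each $i$ there is a residue $r_{i,p} \bmod p$ with $a_i + r_{i,p} d \not\equiv 0 \pmod p$ (in fact all but one residue class works). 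The finitely many primes we must worry about are those dividing $\prod_i a_i$ together with (when we want the odd condition) the prime $2$; for each such $p\nmid d$ pick such an $r_{i,p}$ for every $i$, and by the Chinese Remainder Theorem choose each $m_i$ simultaneously congruent to $r_{i,p}$ modulo all these finitely many primes. Then no prime divides all the $a_i+m_id$, giving gcd $1$.

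For the parity refinement, when $d$ is odd I would simply include $p=2$ among the primes handled by CRT: since $d$ is odd, $d\equiv 1\pmod 2$, so choosing $m_i \equiv 1 - a_i \pmod 2$ forces $a_i+m_id$ odd, and this congruence condition on $m_i$ mod $2$ is compatible (via CRT, as $2$ is coprime to the odd primes used) with the congruences imposed above. This yields $a_i+m_id$ odd for every $i$ while keeping the gcd equal to $1$ (and hence, after undoing the division by $g$, equal to $\gcd(d,a_1,\ldots,a_k)$ in general, with the reductions staying odd since $g$ is odd when $d$ is).

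The only mild subtlety — not really an obstacle — is bookkeeping the two layers of scaling: one must check that after multiplying back by $g$ the gcd is exactly $g$ (clear, since each $a_i+m_id$ is $g$ times something and those somethings have gcd $1$) and that the odd conclusion survives (it does, because $d$ odd forces $g\mid d$ odd). I expect the write-up to be essentially a direct transcription of the proof of \cref{lem:triple_gcd_lemma} with $k$ general rather than $2$, so the main thing to be careful about is stating the CRT application cleanly for an arbitrary finite set of primes rather than a single modulus.
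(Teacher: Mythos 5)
There is a genuine gap at the decisive step of your coprime case. After reducing to $\gcd(d,a_1,\ldots,a_k)=1$, you impose congruence conditions on the $m_i$ only at the finitely many primes dividing $\prod_i a_i$ (plus $2$), and then conclude that no prime divides all the $a_i+m_id$. That conclusion does not follow: since \emph{every} coordinate gets shifted, the gcd of the new tuple need not divide any of the original data, so a prime outside your finite worry set can perfectly well divide all of the $a_i+m_id$. Concretely, take $a_1=a_2=1$, $d=7$: your constraints amount to a condition modulo $2$ only (the $m_i$ even, for oddness), and $m_1=m_2=2$ satisfies them, yet $\gcd(1+14,\,1+14)=15\neq 1$. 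The CRT pins the $m_i$ down only modulo the product of the chosen primes, and nothing controls the primes you did not list. (Secondary point: if some $a_i=0$, then ``primes dividing $\prod_i a_i$'' is all primes, so the worry set is not even finite as stated.) Note also that this is \emph{not} a direct transcription of \cref{lem:triple_gcd_lemma}: there both entries are shifted by the same $nc$, so the difference $b-a$ is invariant and every common divisor of $a+nc,b+nc$ divides it — that invariant is exactly the anchor your version lacks once the $m_i$ are independent.

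The repair, which is the paper's route, is to anchor one coordinate: after permuting so that $a_1\neq 0$, keep $m_1=0$. Then whatever the other $m_i$ are, the resulting gcd divides $a_1$, so only the finitely many prime divisors of $a_1$ need handling, and for each such $p$ a good residue class for $m_i$ modulo $p$ exists exactly by your $p\mid d$ versus $p\nmid d$ dichotomy (the paper does this pairwise, obtaining $\gcd(a_1,a_i+m_id)=\gcd(d,a_1,a_i)$ for each $i$, and then combines the gcds). Anchoring, however, clashes with your parity trick, since $a_1+m_1d=a_1$ may be even; the paper circumvents this by first shifting every $a_i$ by a multiple of the odd number $d$ to make it odd, observing that $\gcd(d,a_1',\ldots,a_k')=\gcd(2d,a_1',\ldots,a_k')$, and then running the anchored argument with $2d$ in place of $d$, so that all subsequent shifts are even and preserve oddness. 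Your reduction-by-$g$ bookkeeping is fine; it is the unanchored CRT step that needs to be fixed.
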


\begin{proof}
    Without loss of generality, we may assume $a_1 \neq 0$. Indeed, for this one should permute the $a_i$ if necessary; this does not work only when all $a_i = 0$, in which case the statement is trivial.
    
    We first prove that for all $a_1,a_2,d \in \Z$, there is an $m$ such that $\gcd(a_1,a_2+md) = \gcd(d,a_1,a_2)$. Dividing by $\gcd(d,a_1,a_2)$, we may assume that $\gcd(d,a_1,a_2) = 1$. For any prime divisor $p | a_1$, there exists an integer $m_{(p)} \in \Z$ such that $p$ does not divide $a_2 + md$ for any $m \equiv m_{(p)}\mod p$. Applying the Chinese remainder theorem, we obtain an $m \in \Z$ such that $a_2 + md$ and $a_1$ have no common prime divisors, providing $\gcd(a_1,a_2+md) = 1$.

    Now for the general case, let $d,a_1,\ldots,a_k \in \Z$. The previous paragraph provides us with $m_2,\ldots, m_k \in \Z$ such that $\gcd(a_1,a_i+m_id) = \gcd(d,a_1,a_i)$ for $1 \leq i \leq k$. Then with $m_1 = 0$, we have
    \begin{align*}
    \gcd(d,a_1,\ldots,a_k) & = \gcd \left( \gcd(d,a_1,a_2),\gcd(d,a_1,a_3),\ldots,\gcd(d,a_1,a_k) \right) \\ &
    = \gcd \left( \gcd(a_1,a_2+m_2d),\gcd(a_1,a_3+m_3d),\ldots,\gcd(a_1,a_k+m_kd) \right) \\ 
    & =  \gcd(a_1,a_2+m_2d,\ldots a_k+m_kd) = \gcd(a_1+m_1d,\ldots,a_k+m_kd).
    \end{align*}

    If $d$ is odd, then replace $a_i$ with $a_i' = a_i+m_i'd$ in a way such that $a_i'$ is odd for all $1 \leq i \leq k$. Then
    \[
    \gcd(d,a_1,\ldots,a_k) = \gcd(d,a_1',\ldots,a_k') = \gcd(2d,a_1',\ldots,a_k').
    \]
    Now the first statement of the lemma shows the existence of $m_1'',\ldots,m_k''$ such that
    \begin{align*}
        \gcd(2d,a_1',\ldots,a_k') &= \gcd(a_1'+ m_1''\cdot 2d,\ldots,a_k'+m_k'' \cdot 2d)\\ 
        &= \gcd(a_1+(m_1'+2m_1'')d,\ldots,a_k+(m_k'+2m_k'')d)\\ 
        &= \gcd(a_1'+2m_1''d,\ldots,a_k'+2m_k''d).
    \end{align*}
    It follows that $\gcd(d,a_1,\ldots,a_k) = \gcd(a_1'+2m_1''d,\ldots,a_k'+2m_k''d)$. Since the $a_i'$ are odd, so are the $a_i'+2m_i''d$.
\end{proof}

\begin{proof}[Proof of \cref{T:ReflNormalFormFinite}]
 Take an element 
 \[u=e_{a_1}\cdots e_{a_k} \in \FullR_d.\] 
 Its fullness can be expressed by the condition
 \[\gcd(d,a_1-a_2, a_1-a_3,\ldots,a_1 - a_k) = 1.\]
 By \cref{lem:lifting_to_coprime_elements}, there are some $m_i \in \Z$ with
  \[\gcd(a_1-a_2+m_1d, a_1-a_3+m_2d,\ldots,a_1 - a_k+m_{k-1}d) = 1.\]
  This allows us to lift $u$ to a full element 
  \[u'=e_{a_1}e_{a_2-m_1d}\cdots e_{a_k-m_{k-1}d} \in \FullR.\]
  By \cref{T:ReflNormalForm}, it can be presented as
\[u'=e_0^ke_1^le_{c'} \qquad \text{ for some } k,l > 0, \ c' \in \Z.\]
 Taking a quotient modulo $d$ yields a new presentation of $u$:
 \[u=e_0^ke_1^le_c \qquad \text{ for some } k,l > 0, \ c \in \Z_d.\]
 
 For even $d$, the parameters $k$, $l$ and $c$ are determined by $\w(u)$, $l_0(u)$, $l_1(u)$ and the parity of $c$. The number of such presentations is then determined in the same way as in the infinite case (\cref{T:ReflNormalForm}).

 For odd $d$, we assume $a_1$ odd; otherwise replace each $a_i$ with $a_i+d$. \cref{lem:lifting_to_coprime_elements} guarantees that all elements $a_1 - a_i+m_{i-1}d$ above can be assumed odd, implying that all the $a_i-m_{i-1}d$ are even. This is translated by $l_1(u')=1$. Then the presentation from \cref{T:ReflNormalForm}, takes the particular form
\[u'=e_0^{k}e_1e_{c'},\]
inducing an analogous presentation for $u$. Here $k=l(u)-2$, and the parameter $c$ for $u$ is then uniquely determined by $\w(u)$. Such a presentation is thus unique.
\end{proof}

In a similar way, one adapts \cref{T:StructureFRS}:
\begin{thm}\label{T:StructureFRSFinite}
 For even $d$, the following map is an injective morphism of semigroups:
 \begin{align*}
 \iota_d \colon \FullR_d &\to \Z_d \rtimes (\N \times \N),\\
 w &\mapsto (\w(w), l_0(w), l_1(w)). 
 \end{align*}
 The image of this inclusion is
 \[\Im(\iota_d) = \left( \{(m, k,l) : k,l \in \N, k+l>2 \ m \in \Z_d,\ l \equiv m \mod 2 \} \right) \sqcup (\Z_d^* \times \{(1,1)\}).\]
 where $\Z_d^* = \{ a \in \Z_d : \langle a \rangle = \Z_d \}$.
 
 For odd $d>1$, the following map is an injective morphism of semigroups:
 \begin{align*}
 \iota_d \colon \FullR_d &\to \Z_d \rtimes \N_{\geq 2},\\
 w &\mapsto (\w(w), l(w)). 
 \end{align*}
  The image of this inclusion is
 \[\Im(\iota_d) = (\Z_d \times \N_{\geq 3}) \sqcup (\Z_d^* \times \{2\}).\]
 
 For $d=1$, the length function $l$ realises a semigroup isomorphism $\FullR_1 \cong \N$.
\end{thm}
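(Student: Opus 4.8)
The plan is to mirror the three parts of the proof of \cref{T:StructureFRS} — morphism, injectivity, image — reading every identity modulo $d$, and to treat elements of length $2$ and $3$ by hand, since they lie outside the scope of the normal form \cref{T:ReflNormalFormFinite}. For $d=1$ I would just note that $\R_1$ has a single element, so $\As^+(\R_1)$ is free on the one generator $e_0$ and $l$ gives the isomorphism $\FullR_1\cong\N$; I would then assume $d\geq2$. For the morphism property I would first record that $\FullR_d$ is a subsemigroup of $\As^+_{\geq1}(\R_d)$ — the case $c=1$, $a=0$ of \cref{P:ReductionToFull}, also immediate from the density formula \cref{E:DensityProduct} — and then use that $l_0,l_1$ (even $d$), resp.\ $l$ (odd $d$), are semigroup morphisms by the lemmata above and that $\w$ satisfies the cocycle identity $\w(vw)=\w(v)+(-1)^{l(v)}\w(w)$ in $\Z_d$ (obtained by writing a product as a concatenation of words, exactly as in the infinite case). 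Since $(-1)^{l(v)}=(-1)^{l_0(v)+l_1(v)}$, these are precisely the compatibilities of $\iota_d$ with the semidirect-product multiplications on $\Z_d\rtimes(\N\times\N)$ and $\Z_d\rtimes\N_{\geq2}$, so this step is a direct transcription of the computation in the proof of \cref{T:StructureFRS}.

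For injectivity I would split by length. Any $w\in\FullR_d$ has $l(w)\geq2$, because a single generator has density $d\neq1$. If $l(w)=2$, the length-$2$ analysis of \cref{S:MonoidsInfiniteReflection} read modulo $d$ shows that the words representing $w$ form $\{e_xe_y:x-y\equiv\w(w)\}$, and fullness forces $\w(w)$ to be a unit of $\Z_d$; so $w$ is recovered from $\w(w)$. If $l(w)=3$, the finite analogue of \cref{lem:three_element_lemma} (using $\deg(w)=1$) gives $w=e_{\w(w)+1}^2e_{\w(w)}$, again recovered from $\w(w)$. If $l(w)\geq4$, \cref{T:ReflNormalFormFinite} yields a presentation $w=e_0^ke_1^le_c$ with $k,l\geq1$: for even $d$, the values $l_0(w),l_1(w)$ together with the parity of $c$ fix $(k,l)$, and then $\w(w)$ fixes $c\in\Z_d$, while the uniqueness statement in \cref{T:ReflNormalFormFinite} guarantees that the recipe returns the same element no matter which parity of $c$ is admissible; for odd $d$ the distinguished presentation with $l=1$ fixes $k=l(w)-2$, after which $\w(w)$ fixes $c$. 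In every case $w$ is reconstructed from $\iota_d(w)$.

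For the image, the inclusion $\subseteq$ would rest on three elementary facts I would record first: even-indexed letters preserve and odd-indexed letters flip the parity of the weight, whence $\w(w)\equiv l_1(w)\pmod2$ for even $d$; a full element contains a letter of each parity, since two letters of the same parity make the density even, so $l_0(w),l_1(w)\geq1$; and a full length-$2$ element has unit weight. For $\supseteq$ I would exhibit explicit full elements: $e_me_0$ realises $(m,1,1)$, resp.\ $(m,2)$, for every $m\in\Z_d^*$, giving the summand $\Z_d^*\times\{(1,1)\}$, resp.\ $\Z_d^*\times\{2\}$; for even $d$ and the main summand (with $k+l>2$), I would reuse verbatim the end of the proof of \cref{T:StructureFRS}, namely $e_0^ke_1^{l-1}e_c$ for $l\geq2$ (with $c$ the odd integer determined there by $m$ and the parities of $k,l$) and $e_0^{k-1}e_1e_{1+(-1)^km}$ for $l=1$, now read in $\Z_d$ — since $k+l>2$ these have length $\geq3$ and contain adjacent letters $e_0,e_1$, hence are full; and for odd $d$, every $(m,l)$ with $l\geq3$ is realised by $e_0^{l-2}e_1e_c$ with $c=1-(-1)^lm$, of length $l$, weight $m$, and full for the same reason.

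The main obstacle is not conceptual — everything is governed by \cref{T:ReflNormalFormFinite} and the cocycle property of $\w$, both already available — but lies in the bookkeeping around short elements: checking that full length-$2$ elements are exactly those of unit weight (this is why the $(1,1)$-part of the image is $\Z_d^*$ rather than all of $\Z_d$, which is a genuine difference from the imprecise $(1,1)$-part recorded in \cref{T:StructureFRS}), that the length-$3$ elements contribute exactly the triples $(m,1,2)$ with $m$ even and $(m,2,1)$ with $m$ odd, and that the two-presentations ambiguity of the normal form does not break the reconstruction used in the injectivity step.
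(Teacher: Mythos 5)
Your proposal is correct and follows essentially the same route as the paper's proof: the morphism property is checked exactly as for \cref{T:StructureFRS}, injectivity in length $\geq 4$ comes from the normal form of \cref{T:ReflNormalFormFinite}, lengths $2$ and $3$ are settled by the explicit presentations $e_0e_{-\w(w)}$ and $e_{\w(w)+1}^2e_{\w(w)}$ (which is how the paper also identifies the $\Z_d^*\times\{(1,1)\}$, resp.\ $\Z_d^*\times\{2\}$, parts of the image), and the image in length $\geq 3$ is realised by the same explicit words $e_0^{k}e_1^{l-1}e_c$, resp.\ $e_0^{l-2}e_1e_c$, read modulo $d$. The only differences are cosmetic (e.g.\ $e_me_0$ instead of $e_0e_{-\w(w)}$, and your slightly loose phrasing of the length-$2$ orbit, which is harmless since you only use it for full elements, where the weight is a unit).
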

 Here $\N \times \N$ and $\N$ act on $\Z_d$ via $(k,l) \cdot m  = (-1)^{k+l}m$ and $k \cdot m  = (-1)^{k}m$ respectively.

Note that the injectivity statements from the theorem, together with \cref{P:ReductionToFull}, allow one to exhibit a complete list of numerical invariants for the monoid $\As^+(\R_{d})$ ($5$ for even $d$ and $4$ for odd $d$), in the spirit of \cref{T:FullInfariantsStrMonRefl}. This theorem also implies that the full part of the structure \textbf{monoids} of all finite reflection solutions inject into the corresponding structure \textbf{groups}, as it was established in the infinite case in \cref{C:MonoidIntoGroup}.

\begin{proof}
 The $d=1$ case is straightforward. In the remainder of the proof we assume $d>1$. In particular, all full elements are of length $\geq 2$.
 
 The well-definedness of the maps $\iota_d$ is checked in the same way as for the infinite reflection solution $\R_{\infty}$. Their injectivity in length $\geq 4$ is proved using the normal forms from \cref{T:ReflNormalFormFinite}. 
 
 In length $3$, the arguments from the proof of \cref{T:ReflNormalFormFinite} together with \cref{pro:three_letter_reduction} yield presentations of the form 
 \[w=e_{\w(w)+1}^2e_{\w(w)}.\] 
 Note that an element of this form is necessarily full because of the index difference $(\w(w)+1)-\w(w)=1$. In length $2$, the fullness of $w$ is equivalent to the invertibility of its weight, $\w(w) \in \Z_d^*$. Then the discussion before \cref{pro:three_letter_reduction} yields presentations of the form 
 \[w=e_{0}e_{-\w(w)}.\] 
 In both cases, the weight $\w(w)$ determines $w$ uniquely. Moreover, taking elements of the two particular forms above, one realises any weight in $\Z_d$ and $\Z_d^*$ respectively. This describes the corresponding components of $\Im(\iota_d)$.
 
From now on, we work in length $\geq 4$.
 
Let us prove the surjectivity of $\iota_d$ for odd $d>1$. Given some $m \in \Z_d$ and $k \in \N_{\geq 4}$, the element 
\[w=e_0^{k-2}e_1e_c \in \As^+(\R_{d}),\qquad \text{ where } c=\begin{cases}
     1+m & \text{ for odd } k,\\
     1-m & \text{ for even } k
 \end{cases}\]
 is full because of the index difference $0-1$, and satisfies 
 $l(w) = k$ and $\w(w)=m$. This completes the description of $\Im(\iota_d)$ in this case. 

For even $d$, the proof of the suggested description of $\Im(\iota_d)$ in length $\geq 4$ repeats verbatim the proof of \cref{T:StructureFRS}.
\end{proof}

The realisation of the monoids $\FullR_d$ inside simpler monoids with easily extractable length allows us to easily count the number of their elements of given length, and hence compute the growth series:

\begin{cor} \label{pro:growth_of_fds}
    For any integer $d \in \N$, we have
    \[
    \GS_{\FullR_d}(t) = \begin{cases}
        \frac{t}{1-t}, & d = 1; \\
        \varphi(d)t^2 + d \frac{t^3}{1-t}, & d \textnormal{ odd}, d > 1; \\
        \varphi(d)t^2 + \frac{d}{2}\frac{t^3(2-t)}{(1-t)^2}, & d \textnormal{ even}. 
    \end{cases}
    \]
\end{cor}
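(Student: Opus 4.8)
The plan is to compute $\GS_{\FullR_d}(t) = \sum_{g \in \FullR_d} t^{l(g)}$ by counting, for each word length $n$, the number of elements of $\FullR_d$ of that length, using the explicit descriptions of $\Im(\iota_d)$ obtained in \cref{T:StructureFRSFinite}. Since $\iota_d$ is injective and its length function on the target is the obvious one ($l = l_0 + l_1$ for even $d$, and $l$ directly for odd $d$), counting elements of $\FullR_d$ of length $n$ amounts to counting the corresponding tuples in $\Im(\iota_d)$ with the prescribed length.

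\textbf{Case $d = 1$.} Here \cref{T:StructureFRSFinite} gives $\FullR_1 \cong \N$ via $l$, so there is exactly one element of each length $n \geq 1$, hence $\GS_{\FullR_1}(t) = \sum_{n \geq 1} t^n = \frac{t}{1-t}$.

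\textbf{Case $d$ odd, $d > 1$.} By \cref{T:StructureFRSFinite}, $\Im(\iota_d) = (\Z_d \times \N_{\geq 3}) \sqcup (\Z_d^* \times \{2\})$ inside $\Z_d \rtimes \N_{\geq 2}$, with length given by the $\N$-component. In length $2$ there are $\#\Z_d^* = \varphi(d)$ elements; in each length $n \geq 3$ there are $\#\Z_d = d$ elements. Summing, $\GS_{\FullR_d}(t) = \varphi(d) t^2 + d \sum_{n \geq 3} t^n = \varphi(d) t^2 + d\frac{t^3}{1-t}$.

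\textbf{Case $d$ even.} By \cref{T:StructureFRSFinite}, $\Im(\iota_d) = \{(m,k,l) : k,l \in \N,\ k+l > 2,\ l \equiv m \bmod 2\} \sqcup (\Z_d^* \times \{(1,1)\})$, with length $n = k+l$. In length $2$, only the component $\Z_d^* \times \{(1,1)\}$ contributes (the constraint $k+l>2$ excludes $(k,l)=(1,1)$ from the first piece), giving $\varphi(d)$ elements. For $n = k+l \geq 3$, I count pairs $(k,l)$ with $k,l \geq 1$ and $k+l = n$: there are $n-1$ such pairs. For each, the number of valid $m \in \Z_d$ with $m \equiv l \bmod 2$ is $d/2$ (as $d$ is even, exactly half the residues have each parity). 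Hence the number of elements of length $n \geq 3$ is $(n-1)\frac{d}{2}$, and
\begin{align*}
\GS_{\FullR_d}(t) &= \varphi(d) t^2 + \frac{d}{2}\sum_{n \geq 3}(n-1)t^n = \varphi(d)t^2 + \frac{d}{2}\left(\sum_{n \geq 3}(n-1)t^n\right).
\end{align*}
The remaining routine step is to evaluate $\sum_{n \geq 3}(n-1)t^n = \frac{t^3(2-t)}{(1-t)^2}$ (e.g.\ from $\sum_{n \geq 1} n t^n = \frac{t}{(1-t)^2}$ after subtracting the $n=1,2$ terms and reindexing), yielding $\GS_{\FullR_d}(t) = \varphi(d)t^2 + \frac{d}{2}\cdot\frac{t^3(2-t)}{(1-t)^2}$.

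The only genuinely delicate points are bookkeeping ones: making sure the length-$2$ and length-$3$ boundary cases are attributed to the correct component of $\Im(\iota_d)$ (in particular that the ``exceptional'' piece $\Z_d^* \times \{(1,1)\}$, resp.\ $\Z_d^* \times \{2\}$, sits precisely in length $2$ and is disjoint from the generic piece), and correctly counting the parity-constrained residues $m$ in the even case. I do not anticipate any conceptual obstacle, since \cref{T:StructureFRSFinite} has already done all the structural work; this corollary is essentially the act of reading off a generating function from an explicit description of a set graded by length.
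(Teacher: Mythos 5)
Your proposal is correct and follows essentially the same route as the paper: both read off the length-graded counts directly from the description of $\Im(\iota_d)$ in \cref{T:StructureFRSFinite} ($\varphi(d)$ elements in length $2$, then $d$ per length for odd $d>1$, resp.\ $(n-1)\frac{d}{2}$ in length $n\geq 3$ for even $d$), and then sum the resulting series. Your evaluation of $\sum_{n\geq 3}(n-1)t^n=\frac{t^3(2-t)}{(1-t)^2}$ and your bookkeeping of the exceptional length-$2$ piece match the paper's argument.
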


Notation $\varphi(n)$ stands here for \emph{Euler's totient function}, which counts the number of integers between $1$ and $n$ coprime with $n$.

\begin{proof}
For $d=1$, \cref{T:StructureFRSFinite} provides exactly one element in each length $\geq 1$. 

For odd $d>1$, it gives $\varphi(d) = \#\Z_d^*$ elements in length $2$, and $d$ elements in each length $\geq 3$.

For even $d$, it gives again $\varphi(d) = \#\Z_d^*$ elements in length $2$, and, in each length $n \geq 3$, the number of elements is
\[\frac{d}{2} \cdot \#\{k,l \in \N : k+l=n\} = \frac{d}{2} \cdot (n-1) .\]
We thus need to compute
\begin{align*}
    \sum_{n \geq 3} (n-1)t^n &= \left( \sum_{n \geq 4} t^n \right)'-2\sum_{n \geq 3} t^n = \frac{t^3(2-t)}{(1-t)^2}.\qedhere
\end{align*}
\end{proof}

To compute the growth series of the entire structure monoids $\As^+(\R_d)$, it remains to plug our formulas for the full parts into \cref{E:GrowthReflMonoidReducedToFull}.  

\begin{thm}\label{T:GrowthStrMonoidReflectionFinite}
For any $d \in \N$, the growth series of the structure monoid of the size $d$ reflection solution has the following form:
    \[\GS_{\As^+(\R_d)} (t)=1+ d \cdot \left( t + \left( \sum_{c|d } \frac{\varphi(c)}{c} \right) t^2 + \tau(d) \frac{t^3}{1-t}  + \tau\left(\frac{d}{2}\right) \frac{t^4}{2(1-t)^2} \right).\]
    Here $\tau$ is the number-of-divisors function:
    \[\tau(a) = \begin{cases}
        \# \{c \in \N : c|a\}, & a \in \N,\\
        0, & a \notin \N.
    \end{cases} \]
\end{thm}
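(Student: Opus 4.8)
The plan is to substitute the restricted growth series $\GS_{\FullR_c}(t)$ from \cref{pro:growth_of_fds} into the reduction formula \eqref{E:GrowthReflMonoidReducedToFull}, namely
\[
\GS_{\As^+(\R_d)}(t) = 1 + \sum_{c \mid d} \frac{d}{c}\cdot \GS_{\FullR_c}(t),
\]
and then to reorganize the resulting finite sum over divisors $c$ of $d$ into the four displayed terms. The only subtlety is that $\GS_{\FullR_c}$ is given by a three-case formula ($c=1$; $c$ odd and $>1$; $c$ even), so the sum over divisors naturally splits, and one has to be slightly careful to treat $c=1$ correctly because its formula $\frac{t}{1-t}$ does not match the ``odd $c>1$'' pattern $\varphi(c)t^2 + c\frac{t^3}{1-t}$.

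First I would separate the $c=1$ contribution, which is $\frac{d}{1}\cdot\frac{t}{1-t} = d\cdot\frac{t}{1-t}$. To make the remaining terms uniform I would observe that $\frac{t}{1-t} = t + \frac{t^2}{1}\cdot 0 + \ldots$ — more precisely, I would rewrite $\frac{t}{1-t} = t + \varphi(1)t^2 + 1\cdot\frac{t^3}{1-t} - \bigl(\varphi(1)t^2 + \frac{t^3}{1-t} - \frac{t}{1-t} + t\bigr)$; cleaner is simply to note $\frac{t}{1-t} = t + \frac{t^2}{1-t}$ and that $\varphi(1)t^2 + 1\cdot\frac{t^3}{1-t} = t^2 + \frac{t^3}{1-t} = \frac{t^2}{1-t}$, so the $c=1$ term equals $d\bigl(t + \varphi(1)t^2 + 1\cdot\tfrac{t^3}{1-t}\bigr)$, i.e. it fits the odd-$c>1$ pattern plus the extra linear term $dt$. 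Thus all odd divisors $c$ of $d$ (including $c=1$) contribute $\sum_{c\mid d,\ c\text{ odd}} \frac{d}{c}\bigl(\varphi(c)t^2 + c\tfrac{t^3}{1-t}\bigr)$ and additionally $c=1$ alone contributes the isolated $dt$. Each even divisor $c$ of $d$ contributes $\frac{d}{c}\bigl(\varphi(c)t^2 + \frac{c}{2}\cdot\frac{t^3(2-t)}{(1-t)^2}\bigr)$, and here I would use the algebraic identity $\frac{t^3(2-t)}{(1-t)^2} = 2\cdot\frac{t^3}{1-t} + \frac{t^4}{(1-t)^2}$ (which is exactly what the last computation in the proof of \cref{pro:growth_of_fds} records, since $\sum_{n\ge 3}(n-1)t^n = 2\sum_{n\ge 3}t^n + \sum_{n\ge 4}(n-3)t^n$, or just clear denominators).

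With these rewrites, the coefficient of $t^2$ becomes $\sum_{c\mid d}\frac{d}{c}\varphi(c) = d\sum_{c\mid d}\frac{\varphi(c)}{c}$, as claimed. The coefficient of $\frac{t^3}{1-t}$: from odd divisors $c$ we get $\sum_{c\mid d,\ c\text{ odd}}\frac{d}{c}\cdot c = d\cdot\#\{c\mid d : c\text{ odd}\}$, and from even divisors we get $\sum_{c\mid d,\ c\text{ even}}\frac{d}{c}\cdot\frac{c}{2}\cdot 2 = d\cdot\#\{c\mid d : c\text{ even}\}$; adding these gives $d\cdot\tau(d)$, matching the $\tau(d)\frac{t^3}{1-t}$ term. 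Finally the coefficient of $\frac{t^4}{(1-t)^2}$ comes only from even divisors: $\sum_{c\mid d,\ c\text{ even}}\frac{d}{c}\cdot\frac{c}{2} = \frac{d}{2}\cdot\#\{c\mid d : c\text{ even}\} = \frac{d}{2}\tau(d/2)$, using the standard bijection $c\mapsto c/2$ between even divisors of $d$ and all divisors of $d/2$ (valid when $d$ is even; when $d$ is odd there are no even divisors and $\tau(d/2)=0$ by the stated convention, so the term vanishes consistently). Assembling the four pieces gives precisely the stated formula.

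The main ``obstacle'' — really just the only place requiring care — is bookkeeping the $c=1$ term so that the isolated $dt$ appears exactly once and is not double-counted, together with checking the divisor-counting identities $\#\{c\mid d : c\text{ odd}\} + \#\{c\mid d : c\text{ even}\} = \tau(d)$ and $\#\{c\mid d : c\text{ even}\} = \tau(d/2)$, and confirming the convention $\tau(d/2)=0$ for odd $d$ makes the even-divisor terms vanish. None of this is deep; the computation is entirely mechanical once \cref{pro:growth_of_fds} and \eqref{E:GrowthReflMonoidReducedToFull} are in hand.
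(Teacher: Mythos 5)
Your proposal is correct and follows essentially the same route as the paper's proof: split the sum in \eqref{E:GrowthReflMonoidReducedToFull} over divisors according to the three cases of \cref{pro:growth_of_fds}, absorb the $c=1$ term via $\frac{t}{1-t}=t+t^2+\frac{t^3}{1-t}$, rewrite $\frac{t^3(2-t)}{2(1-t)^2}=\frac{t^3}{1-t}+\frac{t^4}{2(1-t)^2}$, and count divisors to obtain $\tau(d)$ and $\tau(d/2)$. The bookkeeping you describe matches the paper's computation step for step.
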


Note that the expression $d \cdot \sum_{c|d } \frac{\varphi(c)}{c}$ can be seen as the convolution of $\varphi$ and $\Id$.

The result depends heavily on the arithmetic properties of $d$. This is not surprising, given that the algebraic structure of dihedral groups depends heavily on the arithmetic properties of their size.

\begin{proof}
    Let us split the sum in \cref{E:GrowthReflMonoidReducedToFull} into three parts, according to the three cases in our computation of $\GS_{\FullR_c}(t)$:
    \begin{align*}
    \GS_{\As^+(\R_d)} (t) &= 1 + d \cdot \GS_{\FullR_1}(t)+ \sum_{\substack{1<c|d \\ c \text{ odd}} }\frac{d}{c} \cdot \GS_{\FullR_c}(t)+ \sum_{\substack{c|d \\ c \text{ even}}} \frac{d}{c}\cdot \GS_{\FullR_c}(t)\\
    &=1+ d \cdot \frac{t}{1-t} +d \cdot \sum_{\substack{1<c|d \\ c \text{ odd}} } \left(\frac{\varphi(c)}{c}t^2 + \frac{t^3}{1-t}\right) +d \cdot \sum_{\substack{c|d \\ c \text{ even}} } \left(\frac{\varphi(c)}{c}t^2 + \frac{t^3(2-t)}{2(1-t)^2}\right)\\
    &=1+ d \cdot \left( \frac{t}{1-t} + \left( \sum_{1<c|d } \frac{\varphi(c)}{c} \right) t^2 + \sum_{1<c|d } \frac{t^3}{1-t}  + \sum_{\substack{c|d \\ c \text{ even}} } \frac{t^4}{2(1-t)^2} \right)\\
    &=1+ d \cdot \left( t + \left( \sum_{c|d } \frac{\varphi(c)}{c} \right) t^2 + \sum_{c|d } \frac{t^3}{1-t}  + \sum_{\substack{c|d \\ c \text{ even}} } \frac{t^4}{2(1-t)^2} \right)\\
    &=1+ d \cdot \left( t + \left( \sum_{c|d } \frac{\varphi(c)}{c} \right) t^2 + \tau(d) \frac{t^3}{1-t}  + \tau\left(\frac{d}{2}\right) \frac{t^4}{2(1-t)^2} \right).\qedhere
    \end{align*}
\end{proof}

\bibliographystyle{abbrv}
\bibliography{refs}

\end{document}